\newcommand{\ds}{\displaystyle}
\newcommand{\reals}{\mathbb{R}}
\newcommand{\xb}{{\bf{x}}}
\newcommand{\Dn}{\partial_{\nu}}
\newcommand{\cE}{{\mathcal{E}}}
\newcommand{\cF}{\mathcal{F}}
\newcommand{\R}{\mathbb{R}}
\newcommand{\e}{\epsilon}
\newcommand{\Ez}{E_{z}}
\theoremstyle{plain}
\newtheorem{theorem}{Theorem}[section]
\newtheorem{lemma}[theorem]{Lemma}
\newtheorem{proposition}[theorem]{Proposition}
\newtheorem{condition}{Condition}
\newtheorem{definition}{Definition}
\newtheorem{conjecture}{Conjecture}
\theoremstyle{remark}
\newtheorem{remark}{Remark}[section]
\numberwithin{equation}{section}
\numberwithin{theorem}{section}
\numberwithin{remark}{section}
\numberwithin{assumption}{section}
\numberwithin{condition}{section}
\title{Quasi-stability and Exponential Attractors for A Non-Gradient System---Applications to Piston-Theoretic Plates with Internal Damping}
 \author{\normalsize \begin{tabular}[t]{c@{\extracolsep{.6em}}c@{\extracolsep{.6em}}c}
      Jason Howell &  Irena Lasiecka & Justin T. Webster\footnote{Corresponding author}  \\
\it  College of Charleston & \it Univ. of Memphis    &\it  College of Charleston \\
\it Charleston, SC & \it Memphis, TN &\it Charleston, SC\\
howelljs@cofc.edu &\it lasiecka@memphis.edu&  \it websterj@cofc.edu
\end{tabular}}
\begin{document}
\maketitle
\begin{center}
\large {In memory of Igor D. Chueshov}
\end{center}

\begin{abstract} {\noindent
We consider a nonlinear (Berger or Von Karman) clamped plate model with a {\em piston-theoretic} right hand side---which include non-dissipative, non-conservative lower order terms. The model arises in aeroelasticity when a panel is immersed in a high velocity linear potential flow; in this case the effect of the flow can be captured by a dynamic pressure term written in terms of the material derivative of the plate's displacement. The effect of fully-supported internal damping is studied for both Berger and von Karman dynamics. The non-dissipative nature of the dynamics preclude the use of strong tools such as backward-in-time smallness of velocities and finiteness of the dissipation integral. Modern quasi-stability techniques are utilized to show the existence of compact global attractors and generalized fractal exponential attractors. Specific results depending on the size of the damping parameter and the nonlinearity in force. For the Berger plate, in the presence of large damping, the existence of a proper global attractor (whose fractal dimension is finite in the state space) is shown via a decomposition of the nonlinear dynamics. This leads to the construction of a compact set upon which quasi-stability theory can be implemented. Numerical investigations for  appropriate 1-D models are presented which explore and support the abstract results presented herein.  \\
\noindent {\bf Key terms}:  global attractor, exponential attractor, nonlinear plate equation, quasi-stability
 \\
\noindent {\bf MSC 2010}: 35B41, 74H40, 74B20}
\end{abstract}

\maketitle

\section{Introduction}
In this work we consider a nonlinear plate equation (in the sense of large deflections \cite{bolotin,ciarlet,dowellnon,dowell}) in the absence of rotational inertia and in the presence of a non-conservative, non-dissipative right hand side. This model arises in application studying a flexible plate (or shell) immersed in a high-velocity inviscid potential flow. We consider linear, fully-supported interior damping with scaling given (i) as a contribution from the down-wash of the flow dynamics; and (ii) as a control parameter. Specifically, we continue work initiated in \cite{springer} (and references therein) concerning the long-time behavior of generalized (and weak solutions) in the presence of damping. We note that in \cite{springer} the primary focus is on nonlinear damping; we focus on linear damping for reasons outlined below (and for simplicity), and our primary results depend on the size of the damping.

In particular, we are interested in the existence and properties of compact global attractors, as well as fractal exponential attractors (and generalized fractal exponential attractors) for the dynamical systems generated by generalized solutions to this nonlinear plate equation. (See the Appendix for definitions and standard theorems pertaining to attractors and long-time behavior of dynamical systems; we also note the seminal reference \cite{Babin-Vishik}.) We note that one of the primary objects of interest here is the  {\em generalized} fractal exponential attractor---which is not necessarily the object of interest for other analyses, e.g., \cite{fgmz, kalzel}. The generalized fractal exponential attractor is a compact subset of the state space which: (i) has finite fractal dimension in some space (perhaps with a weaker topology than the state space); and (ii) uniformly attracts bounded subsets of the state space (in the topology of the state space) with exponential rate. When we refer to {\em fractal exponential attractors} we mean something stronger: namely, the same as the above with bounded fractal dimension in the state space itself. 

\subsection{Motivation: Panel Flutter and Flow-Plate Models}
In the the study of the {\em panel flutter} phenomenon in aeroelasticity, one considers a clamped or hinged nonlinear plate (von Karman or Berger) immersed in a potential flow \cite{bolotin,dowell,dowellnon}. A full {\em flow-plate} model is utilized which has the nonlinear plate embedded in the boundary of the flow domain, and the interactive dynamics are given by a wave-plate coupling along the interface. To simplify the analysis, under certain physical assumptions (or in certain parameter regimes---e.g., large flow speeds or low characteristic frequencies), one can {\em reduce} the full flow-plate model to a stand-alone plate model with delay, or with non-dissipative effects. The later case is referred to as piston theory\footnote{The nomenclature ``piston" derives from the assumption the in certain parameter regimes the dynamic fluid pressure acting on the plate can be treated as pressure on the head of piston moving in cylinder.}\cite{pist2,dowell,pist1,jfs, vedeneev} (or, invoking the law of {\em plane sections} \cite{bolotin}). (See \cite{survey1,survey2} for recent overviews on these flow-plate model reductions and references to relevant mathematical and engineering literature.) 

In the analysis of the full flow-plate model one considers the pressure acting on the surface of the plate, $p(x,t)$, as a function of the flow dynamics restricted to the interface. In classical piston theory \cite{bolotin,dowellnon,oldpiston} one restricts to ``valid" parameter regimes and replaces flow effects driving the plate dynamics (the {\em dynamic pressure across the plate})  with a nonlinear function of the so called {\em downwash}: $$P(x,t)=p(x)+D\Big(-[u_t+Uu_x]\Big),$$ where, $D(\cdot)$ can be any number of (potentially nonlinear \cite{pist2,pist1,survey1,survey2}) expressions. However, the most common piston-theoretic model (utilized for the sake of simplicity, and which is sufficiently accurate here \cite{jfs,vedeneev}) is linear.

Thus we arrive at the following model (in the absence of imposed structural damping):
\begin{equation}\label{plate-stand}\begin{cases}
u_{tt}+\Delta^2u+f(u) = p-\mu[u_t+Uu_x] ~~ \text { in } ~\Omega\times (0,T), \\
u=\Dn u =0  ~~\text{ on } ~ \partial\Omega\times (0,T).
\end{cases}
\end{equation}
Here $U>0$ is the {\em unperturbed flow velocity} and $\mu>0$ measures the strength of the interaction between the fluid and the structure (and typically depends on, for instance, free-stream density \cite{vedeneev}). 

We note that the effects of the flow are two-fold and somewhat conflicting: (i) a linear damping term with ``good sign", fully-supported on the interior of the plate, and (ii) a non-dissipative  (and destabilizing) term, scaled by the flow velocity parameter $U$. In some sense the interaction between these two terms is the key driver of interest in this plate problem (and in the general flutter phenomenon). 
Such a non-dissipative, {\em piston-theoretic} model has been intensively studied in 
for different boundary conditions and resistive damping forces, see \cite{daniellorena,Memoires,springer}.

In line with the aeroelasticity literature \cite{dowell,dowellnon} (and references therein), we primarily focus on the theory of large deflections for the plate. Taking the boundary to be clamped (as we do) or hinged, von Karman theory is considered accurate \cite{ciarlet,dowell,lagnese}.  Moreover, as we consider thin plates, we do not consider rotational inertia effects \cite{lagnese}, nor in plane accelerations \cite{lagleug}. In addition, by making an ad hoc simplification to von Karman's scalar plate model, one arrives at a ``friendlier" plate model, known as the {\em Berger equation} \cite{berger,menz2,yang}\footnote{See \cite{gw} for an in-depth discussion of this simplification and its implications.}. Other work on flutter dynamics have considered Berger-like beam and plate models (see \cite{Memoires} and references therein), as well as the scalar von Karman equations \cite{springer,delay,supersonic,webster}. Physically, when the plate has no portion of its boundary {\em free}, the Berger approximation of von Karman's dynamics is taken to be valid \cite{gw,inconsistent,studyberger}.
\begin{remark}
In 1-D, the appropriate model (reflecting an appropriate singular limit of the full von Karman dynamics \cite{lagleug} as in-plane accelerations vanish) is the 1-D analog of the Berger model presented here (sometimes also referred to as a Timoshenko beam \cite{menz}, or Krieger beam \cite{beam1}). In Section \ref{numerics} we consider some numerical results for this 1-D Berger beam model. 
\end{remark}

 Many of the results herein are simultaneously valid for both the von Karman {\em and} Berger plate dynamics. However, the principal results presented below in Section \ref{mainresults} on the existence and structure of limiting dynamics, as well as the appropriate analytical techniques, depend on which nonlinearity is in force. Both nonlinearities are nonlocal and of cubic-type on the finite energy space. We note that, in many cases---not here---the analysis of the clamped scalar von Karman plate entirely subsumes that of Berger---see the abstract analysis in \cite{Memoires,springer}.

\subsection{Model} Let $\Omega \subset \mathbb R^2_{x,y}$ be a domain with smooth boundary $\partial \Omega = \Gamma$. We identify the reference configuration of the plate (of negligible thickness, consistent with the conventions of large deflection theory and aeroelasticity \cite{ciarlet, lagnese}) with $\Omega$. The function $u: \Omega \times \mathbb R_+ \to \mathbb R$ represents the transverse ($z$) displacement of the point $(x,y)$ at the instant $t$. 

To describe the nonlinear elastic dynamics, we consider a  plate equation which is a generalization of that arising in linear {\em piston theory}, as discussed above:
\begin{equation}\label{plate}
\begin{cases}
u_{tt}+\Delta^2u+ku_t+f(u)=p+ Lu & \text{ in } \Omega \\
u=\partial_{\nu}u=0 & \text{ on }~ \Gamma \\
u(0)=u_0,~~u_t(0)=u_1
\end{cases}
\end{equation}
We take clamped boundary conditions corresponding to a panel configuration---which provides some simplification---though we could consider simply-supported, hinged, or various combinations of these conditions\footnote{One should note that changes in the boundary conditions, even on a small portion of the boundary, can lead to dramatic changes in the dynamics and may necessitate an entirely different model.}. 
The linear operator\footnote{In practice we think of $L$ as a first order differential operator with smooth, bounded coefficients.} \begin{equation}\label{el}L: H^{2-\sigma}(\Omega) \to L^2(\Omega),~~\sigma>0,~~\text{ is continuous.}\end{equation} The function $p \in L^2(\Omega)$ represents a static pressure on the surface of the plate.

We consider two dynamics:
\begin{equation*} f(u)=
\begin{cases}
f_B(u)=\Big(b -b_0||\nabla u||_{L^2(\Omega)}^2\Big)\Delta u,&~~\text{Berger}\\[.2cm]
f_V(u)= -[u,v(u)+F_0],&~~\text{von Karman}.
\end{cases}
\end{equation*}
The parameter $b$ is a physical parameter capturing in-plane tension or compression at equilibrium; we take $b>0$, indicative of the presence of forces in the direction of $-\nu(x,y)$ (the unit inward normal). The parameter $b_0$ measures the strength of the nonlinearity, and until Section \ref{numerics} we take $b_0=1$ for simplicity. For $f_V$, the function $F_0 \in H^4(\Omega)$ is representative of in-plane forces. The notation $[\cdot,\cdot]$ corresponds to the von Karman bracket $\ds [u,w] = u_{xx}w_{yy}+u_{yy}w_{xx}-2u_{xy}w_{xy}$, with $v(u)=v(u,u)$ denoting the Airy stress function \cite{springer,ciarlet}, given as the biharmonic solver:
\begin{equation}\label{airy}\begin{cases}
\Delta^2 v(u,w) = -[u,w] ~~~\text{ in }~\Omega \\
\partial_{\nu} v(u,w) = v(u,w) = 0 ~~~\text{ on }~\Gamma,
\end{cases}
\end{equation}
for given $u,w \in H_0^2(\Omega)$. We will use the shorthand $v(u,u)=v(u)$. 

The damping coefficient $k$ can be thought of as representing two distinct phenomenon in the modeling of flow-plate interactions: (i) In the reduction of a full flow-plate interaction to a piston-theoretic plate \cite{pist2,survey1,survey2,pist1,jfs,vedeneev}, the RHS of the plate equation inherits the theoretic term ~$\ds-\mu[u_t+Uu_x],$~ where $U$ corresponds to the (large) unperturbed flow velocity and $\mu$ measures the strength of the interaction between the plate and flow dynamics. One can see that our operator $L$ (non-dissipative) embodies the lower order differential operator ~$-\mu U\partial_x$; due to the {\em sign} of the piston-theoretic term, a natural damping of the form $\mu u_t$ is inherited on the LHS of plate equation, {\em even if there is no structural damping imposed} in the model.
(ii) The damping coefficient $k$ can also incorporate frictional damping (or other viscous damping parameters), or a fully-supported velocity feedback control. We measure these effects via a coefficient $k_0$. 

Thus we can envision the damping parameter $k$ in the model as $k=\mu+k_0$, where $\mu$ is a magnitude given from the properties of the flow model and interaction, and $k_0$ can be thought of as a control or material parameter which can be made large. In what follows, two types of results will be presented on the existence and properties of exponential attractors: (i) results independent of the magnitude of $k$ (so long as $k>0$), and (ii) results dependent on the size of $k>k_{\text{min}}$. As one might guess---and as we will see---type (ii) results have stronger conclusions, owing the stronger damping presence. 

\noindent{\bf  Notation:}
 For the remainder of the text denote $\xb$ for $(x,y) \in \Omega$. Norms $||\cdot||$ are taken to be $L^2(D)$ for the domain $D$. The symbol $\nu$ will be used to denote the unit normal vector to a given domain $D$. Inner products in $L^2(D)$ are written $(\cdot,\cdot)$. Also, $ H^s(D)$ will denote the Sobolev space of order $s$, defined on a domain $D$, and $H^s_0(D)$ denotes the closure of $C_0^{\infty}(D)$ in the $H^s(D)$ norm denoted by $\|\cdot\|_{H^s(D)}$ or $\|\cdot\|_{s,D}$.  We make use of the standard notation for the boundary trace of functions, e.g., for $w \in H^1(D)$, $ \gamma (w) \equiv tr[w]=w \big|_{\partial D}$ is the trace of $w$.
\subsection{Energies and Well-Posedness}\label{en}
We denote the standard plate energy in the non-rotational case \cite{ciarlet,lagnese}:
\begin{align}
E(t) =&~\dfrac{1}{2}\Big[||\Delta u(t)||^2 +||u_t(t)||^2\Big] \\
\mathcal E(t) = &~ E(t)+\Pi(t)
\end{align}
where $\Pi$ represents the nonlinear and unsigned terms:
\begin{equation} \Pi \equiv 
\begin{cases}
\Pi_V = \dfrac{1}{4} ||\Delta v(u)||^2-(F_0,[u,u]) -(p,u) \\[.3cm]
\Pi_B = \dfrac{1}{4} ||\nabla u||^4-\dfrac{b}{2}||\nabla u||^2-(p,u). 
\end{cases}
\end{equation}

We will also delineate positive energies below, which we will denote by:
\begin{equation} E^*(t) \equiv 
\begin{cases}
E^*_{V}(t) = E(t)+\dfrac{1}{4} ||\Delta v(u(t))||^2 \\[.3cm]
E^*_{B}(t) = E(t)+ \dfrac{1}{4} ||\nabla u(t)||^4.
\end{cases}
\end{equation}
When the context is clear we will simply write $E^*(t)$. The topology of finite energy will be denoted by $Y\equiv H_0^2 (\Omega) \times L^2(\Omega)$. This space corresponds to $\mathscr D(A^{1/2})$, where $A: L^2(\Omega) \to L^2(\Omega)$ represents the biharmonic operator with clamped boundary conditions \cite[Chapter 1.3]{springer}. We will also consider a stronger space $W \equiv (H^4\cap H_0^2)(\Omega) \times H_0^2(\Omega)$; we have that $\mathscr D(A) \equiv W$ \cite{springer,LT}.

Here we do not provide a thorough discussion of solutions (weak, generalized, and strong). Rather we refer to the detailed exposition in \cite[Section 2.4]{springer} for abstract second order equations and \cite[Chapter 4]{springer} for the von Karman plate equation (as considered here). We suffice to say that {\em strong solutions} correspond to those solutions with initial data in the domain of the generator for the linear elasticity problem at hand ($W$); {\em generalized solutions} (sometimes called mild) are {strong limits of strong solutions} in the finite-energy topology ($Y$), and correspond to so called semigroup solutions; finally, {\em weak solutions} satisfy a natural variation problem associated to \eqref{plate}.

We have the following theorems concerning the well-posedness of generalized solutions to \eqref{plate} and the corresponding boundedness in time. The following well-posedness result is taken from \cite[Section 4.1.1, p. 197]{springer} for the von Karman dynamics, and \cite{Memoires} for the Berger dynamics.
\begin{proposition}\label{p:well} Let $k\ge0$ and consider either $f_B$ or $f_V$. 
For initial data
$$
(u_0,u_1) \in Y\equiv H_0^2(\Omega)\times L^2(\Omega),
$$
problem (\ref{plate}) has a unique generalized solution on $[0,T]$ for any $T>0$. This solution belongs to
the class
$C\left(0,T;H_0^2(\Omega)\right)\cap C^1\left(0,T;L^2(\Omega)\right),
$
and satisfies the energy identity
\begin{equation}\label{energyrelation}
\mathcal E(t)+k\int_s^t ||u_t(\tau)||^2d\tau=\mathcal E(s)+\int_s^t \big(Lu(\tau),u_t(\tau)\big)_{\Omega} d\tau.
\end{equation}
Moreover, the dynamics $(u,u_t)$ on $Y$ generate a dynamical system $(S_t,Y)$ where for $y_0=(u_0,u_1)$ we have $S_ty_0=(u(t),u_t(t))$ with estimate: \begin{equation}\label{dynsys}
||S_ty_0-S_tx_0||_Y^2\le a_0 e^{\omega_0 t} ||y_0-x_0||_Y^2, \end{equation}
for some $a_0,\omega_0>0$. 
\end{proposition}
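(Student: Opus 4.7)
The plan is to cast \eqref{plate} as an abstract semilinear evolution equation in $Y = H_0^2(\Omega) \times L^2(\Omega)$. Let $A$ denote the biharmonic under clamped boundary conditions (self-adjoint, positive, with $\mathscr{D}(A^{1/2}) = H_0^2(\Omega)$); then the first-order system associated to $u_{tt} + Au + ku_t = 0$ generates a $C_0$-semigroup of contractions on $Y$. By hypothesis \eqref{el}, $L$ is bounded from $H^{2-\sigma}(\Omega)$ into $L^2(\Omega)$ and hence defines a bounded perturbation acting on the velocity component, which preserves semigroup generation. Writing everything in first-order form and absorbing the static pressure $p \in L^2$ as a constant forcing, we arrive at a mild-formulation fixed-point problem $z(t) = e^{t\mathcal{A}}z_0 + \int_0^t e^{(t-s)\mathcal{A}}F(z(s))\,ds$, where $F(u,u_t) = (0,\,p + Lu - f(u))$.

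The analytic crux is that the nonlinear map $u \mapsto f(u)$ is locally Lipschitz from $H_0^2(\Omega)$ into $L^2(\Omega)$. For the Berger case this is elementary: $f_B(u) = (b - \|\nabla u\|^2)\Delta u$ is a cubic expression in $u$ involving the nonlocal scalar $\|\nabla u\|^2$, which is locally Lipschitz on $H_0^2$. For von Karman the key input is the sharp regularity of the Airy stress function: if $u,w \in H_0^2(\Omega)$, then $v(u,w) \in (H^4 \cap H_0^2)(\Omega)$ with bilinear control $\|v(u,w)\|_{H^4} \le C\|u\|_{H^2}\|w\|_{H^2}$, so that $[u,v(u)+F_0] \in L^2(\Omega)$ with the required local Lipschitz dependence on bounded sets (see \cite[Ch.~1]{springer}). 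A standard contraction argument in $C([0,T_\star];Y)$ then yields a unique local mild (generalized) solution for any datum in $Y$.

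To globalize and obtain the identity \eqref{energyrelation}, I would first derive the energy identity for strong solutions (data in $W$), where multiplying the equation by $u_t$ is rigorous; the potentials $\Pi_V$ and $\Pi_B$ emerge via the symmetry $([u,w],z) = ([u,z],w)$ of the von Karman bracket and the chain rule applied to $\frac{1}{4}\|\nabla u\|^4$. The identity then extends to generalized solutions by density, exploiting the continuous dependence built into the semigroup formulation. Since $\Pi$ is bounded below modulo lower-order terms absorbable into $E$, the energy identity combined with Young's inequality $|(Lu,u_t)| \le \epsilon \|u_t\|^2 + C_\epsilon \|u\|_{H^{2-\sigma}}^2$ and Gronwall's lemma delivers global-in-time a priori bounds, so the local solution extends to all $[0,T]$. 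The Lipschitz estimate \eqref{dynsys} follows by subtracting two solutions, invoking the local Lipschitz property of $f$ on energy balls, and applying Gronwall once more. The main technical obstacle is the sharp regularity of the Airy stress function underpinning the von Karman case together with the careful density argument needed to promote the energy identity from strong to generalized solutions; once these are in hand, the rest is standard semigroup theory.
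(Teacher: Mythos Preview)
Your sketch is correct and follows exactly the standard route taken in the references the paper cites for this result: the paper does not give its own proof of Proposition~\ref{p:well} but simply invokes \cite[Section~4.1.1]{springer} for $f_V$ and \cite{Memoires} for $f_B$, and the argument there is precisely the one you outline---abstract second-order semigroup framework, local Lipschitz continuity of the nonlinearity from $H_0^2(\Omega)$ into $L^2(\Omega)$ (with the sharp Airy regularity $\|v(u,w)\|_{W^{2,\infty}}\le C\|u\|_2\|w\|_2$ as the key technical input for von Karman), energy identity on strong solutions extended by density, and Gronwall for the finite-time a priori bound and the Lipschitz estimate \eqref{dynsys}. One small wording point: your phrase ``global-in-time a priori bounds'' should be read as bounds on any finite $[0,T]$ with constants depending on $T$; genuine uniform-in-$t$ bounds are \emph{not} immediate here because of the non-dissipative term $\int_s^t(Lu,u_t)$ (the paper stresses this in Remark~1.1 and devotes Section~\ref{dissp} to it), but $T$-dependent bounds are all that is needed for well-posedness.
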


The following potential bound  is key in allowing the analysis of long-time behavior for the dynamical system generated by solutions. The result can be found in \cite[p. 49]{springer} for the von Karman dynamics, and \cite[p.163]{Memoires} for the Berger dynamics.
 \begin{proposition}\label{potentiallowerbound}
 For any $u \in H^2(\Omega)\cap H_0^1(\Omega)$ we have for any $\epsilon >0$ and any $0< \eta \le 2$:
 \begin{align}
 ||u||^2_{2-\eta} \le&~ \epsilon \Big(||u||_2^2+||\Delta v(u)||^2\Big) + C_{\epsilon},\hskip1cm\text{ for }~f=f_V\\[.2cm]
 ||u||^2_{2-\eta} \le&~ \epsilon \Big(||u||_2^2+||\nabla u||^4\Big) + C_{\epsilon}, \hskip1.5cm \text{ for }~f=f_B
 \end{align}
 From this we have immediately that there exists $c,C,$ and $M$---which depend on $L$, $p$, and $F_0$ (for the case of $f_V$) or $b$ (for the case of $f_B$):
 \begin{equation}\label{enerequiv}
 cE^*-M \le \mathcal E \le CE^*+M
 \end{equation}
 \end{proposition}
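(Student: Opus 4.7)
The two displayed inequalities are standard ``below the potential'' bounds for the Berger and von Karman dynamics, and the energy equivalence \eqref{enerequiv} follows by absorbing the unsigned lower-order terms of $\Pi$ into $E^*$ with the help of those bounds. We organize the argument in three steps.

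\emph{Step 1 (interpolation).} Since $H_0^2(\Omega)\hookrightarrow H^{2-\eta}(\Omega)\hookrightarrow L^2(\Omega)$ for $0<\eta\le 2$, with $2-\eta=(1-\eta/2)\cdot 2+(\eta/2)\cdot 0$, the standard Sobolev interpolation inequality yields $||u||_{2-\eta}^2 \le C\,||u||_2^{2-\eta}\,||u||^{\eta}$. Young's inequality with conjugate exponents $2/(2-\eta)$ and $2/\eta$ then gives, for every $\delta>0$,
\begin{equation*}
||u||_{2-\eta}^{2} \;\le\; \delta\,||u||_2^2 + C_\delta\,||u||^2.
\end{equation*}
It remains to absorb the $L^2$ term on the right into the nonlinear potential plus a constant.

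\emph{Step 2 (absorption into the potential).} For Berger this is elementary: Poincar\'e gives $||u||^2 \le C||\nabla u||^2$, and a second application of Young's inequality provides $||\nabla u||^2 \le \delta||\nabla u||^4 + C_\delta$; inserting into Step 1 and relabeling $\delta$ proves the estimate for $f=f_B$. For von Karman we would establish the auxiliary coercivity
\begin{equation*}
||u||^2 \;\le\; \delta\big(||u||_2^2 + ||\Delta v(u)||^2\big) + C_\delta
\end{equation*}
by compactness--contradiction. Assume it fails; then there exist $\epsilon_0>0$ and $\{u_n\}\subset H_0^2$ with $||u_n||^2 > \epsilon_0(||u_n||_2^2 + ||\Delta v(u_n)||^2) + n$. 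Poincar\'e forces $||u_n||_2\to\infty$, and after rescaling $w_n:=u_n/||u_n||_2$ the bilinearity $\Delta v(u_n)=||u_n||_2^2\,\Delta v(w_n)$ yields $||w_n||_2=1$, $||w_n||\ge c>0$, and $||\Delta v(w_n)||\to 0$. Extracting a subsequence with $w_n\rightharpoonup w$ in $H_0^2$ and strongly in $H^{2-\eta}$ (so $w\ne 0$), the weak continuity of the von Karman bracket---a div--curl compensated-compactness argument---identifies the limit via $[w,w]=0$, and the geometric fact used in \cite{springer} that no nonzero $w\in H_0^2(\Omega)$ on a smooth bounded $\Omega$ can satisfy $[w,w]=0$ delivers the contradiction. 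This coercivity step is the main obstacle and is the place where the smoothness and geometry of $\Omega$ genuinely enter; everything else is interpolation and Young absorption.

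\emph{Step 3 (energy equivalence).} Writing $\mathcal E = E^* + R(u)$, the remainder is $R_V(u)=-(F_0,[u,u])-(p,u)$ for von Karman and $R_B(u)=-\tfrac{b}{2}||\nabla u||^2-(p,u)$ for Berger. The static pressure obeys $|(p,u)|\le ||p||\,||u||\le \delta||u||_2^2+C_\delta$ by Poincar\'e and Young. Two integrations by parts in the bracket move derivatives onto $F_0\in H^4$ and yield $|(F_0,[u,u])|\le C(F_0)\,||u||_{2-\eta}^2$ for some small $\eta>0$, which Steps 1--2 absorb into $\delta E_V^* + C_\delta$. The Berger tension term $\tfrac{b}{2}||\nabla u||^2$ is absorbed into $\delta||\nabla u||^4 + C_\delta$ exactly as in Step 2. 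Choosing $\delta$ sufficiently small and collecting constants produces $0<c<C$ and $M>0$ with $cE^*-M \le \mathcal E \le CE^*+M$, which is \eqref{enerequiv}.
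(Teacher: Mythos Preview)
Your argument is correct and matches the standard approach in the references the paper cites (the paper itself does not prove this proposition, but defers to \cite[p.~49]{springer} and \cite[p.~163]{Memoires}). In particular, your Step~2 contradiction argument for the von Karman case---rescaling, using homogeneity of $v(\cdot)$ to force $\|\Delta v(w_n)\|\to 0$, passing to the limit via weak continuity of the bracket, and invoking the Monge--Amp\`ere rigidity ``$[w,w]=0$ with $w\in H_0^2(\Omega)$ implies $w\equiv 0$''---is exactly the mechanism used in \cite{springer}; the Berger case is, as you note, elementary interpolation plus Young.

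One minor remark on Step~3: your bound $|(F_0,[u,u])|\le C(F_0)\|u\|_{2-\eta}^2$ is correct, but note that the simplest route is to use the definition of the Airy function directly. Since $v(u)\in H_0^2(\Omega)$ and $F_0\in H^4(\Omega)$, one has $(F_0,[u,u])_\Omega = -(F_0,\Delta^2 v(u))_\Omega = -(\Delta^2 F_0, v(u))_\Omega$, whence $|(F_0,[u,u])|\le \|F_0\|_4\|v(u)\|\le C(F_0)\|\Delta v(u)\|$, which absorbs straight into $\delta\|\Delta v(u)\|^2+C_\delta$ without needing Steps~1--2 at all. Your route via two integrations by parts (yielding $C(F_0)\|u\|_1^2$, so $\eta=1$ rather than ``small $\eta$'') is also fine and equally standard.
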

 
 \begin{remark}
 Unlike the gradient case of these nonlinear plate dynamics ($L\equiv 0$), boundedness-in-time of trajectories is not immediately obvious from \eqref{potentiallowerbound} above---as the non-dissipative term is integrated in time. In fact, this is the major issue in dealing with the model at hand.
 \end{remark}

\subsection{Results and Discussion}
The principal results in this treatment concern the existence and properties of attractors and fractal exponential attractors for the dynamical system to by generalized (semigroup) solutions to \eqref{plate}. We address results for both $f_B$ and $f_V$ with arbitrary positive damping coefficient, as well as scaling up the damping coefficient up to improve results. 
\subsubsection{Main Results}\label{mainresults}
The first result here is an improvement over the corresponding one given in \cite{springer}\footnote{The result to which we refer in \cite{springer} is: for the non-gradient dynamics, in the presence of {\em any damping}, a compact global attractor exists.}. We show here that {\em for the non-gradient dynamics ($L \not\equiv 0$)}, {\em any} damping is sufficient to yield the existence of a compact global attractor for the dynamics which is both smooth and finite dimensional. This result holds for both von Karman ($f=f_V$) and Berger ($f=f_B$) dynamics $(S_t,Y)$. 
\begin{theorem}\label{th:main1}
For $f=f_B$ or $f_V$, and {\bf{any}} $k>0$, there exists a compact global attractor $\mathcal A$ for the dynamical system $(S_t,Y)$ corresponding to generalized solutions to \eqref{plate}. The attractor is smooth in the sense of $\mathcal A \subset W$ (bounded in this topology) and has finite fractal dimension in the space $Y$. 
\end{theorem}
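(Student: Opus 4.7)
The plan is to execute the three-step quasi-stability paradigm of Chueshov-Lasiecka \cite{springer}: (i) establish ultimate dissipativity of $(S_t,Y)$ to produce a bounded absorbing set $\mathcal{B}\subset Y$; (ii) prove a quasi-stability inequality on $\mathcal{B}$; (iii) invoke the abstract consequences of quasi-stability (asymptotic smoothness, finite dimensionality, and regularity of the attractor).

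\textbf{Step 1: Ultimate dissipativity.} This is the crux of the argument and where the non-gradient structure fights back: since the energy identity \eqref{energyrelation} carries the unsigned forcing $\int_s^t(Lu,u_t)\,d\tau$, the functional $\mathcal{E}(t)$ is not monotone and cannot serve as a Lyapunov function directly. I would introduce the perturbed functional
\[
V(t)=\mathcal{E}(t)+\epsilon\bigl(u(t),u_t(t)\bigr),
\]
with $\epsilon>0$ chosen small enough that $V$ is equivalent to $E^{*}$ up to additive constants (via Proposition~\ref{potentiallowerbound}). The key estimate, using \eqref{el}, is
\[
\bigl|(Lu,u_t)\bigr|\le\epsilon\|u_t\|^2+C_\epsilon\|u\|_{H^{2-\sigma}}^2,
\]
so that the first term is absorbed by the damping $k\|u_t\|^2$ (requiring only $\epsilon<k$, hence \emph{any} $k>0$), while the lower-order term $\|u\|_{H^{2-\sigma}}^2$ is absorbed using compactness of $H^2\hookrightarrow H^{2-\sigma}$ together with Proposition~\ref{potentiallowerbound}, $\|u\|_{H^{2-\sigma}}^2\le\delta E^*+C_\delta$. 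Multiplier computations against the equation produce $-\epsilon\|\Delta u\|^2$ and $-\epsilon\|\nabla u\|^4$ (Berger) or $-\epsilon\|\Delta v(u)\|^2$ (von Karman), yielding a differential inequality of the form $\tfrac{d}{dt}V+\omega V\le C$. Gronwall delivers a uniform-in-time bound and hence an absorbing ball $\mathcal{B}\subset Y$.

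\textbf{Step 2: Quasi-stability on $\mathcal{B}$.} For $y^{1},y^{2}\in\mathcal{B}$ with solutions $u^{i}$ and difference $z=u^{1}-u^{2}$,
\[
z_{tt}+\Delta^2 z+kz_t=Lz-[f(u^{1})-f(u^{2})],\qquad z\big|_{\Gamma}=\partial_\nu z\big|_{\Gamma}=0.
\]
The target is an inequality of the form
\[
\|S_ty^{1}-S_ty^{2}\|_Y^2\le C_1 e^{-\omega t}\|y^{1}-y^{2}\|_Y^2+C_2\sup_{s\in[0,t]}n(z(s))^2,
\]
with a compact seminorm $n$ on $H_0^2(\Omega)$, e.g.\ $n(z)=\|z\|_{H^{2-\sigma}}$. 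Standard energy multipliers ($z_t$, $z$, and the equipartition identity) applied to the difference equation, together with (a) boundedness of $L$ as a lower-order operator and (b) the sharp \emph{locally Lipschitz modulo compact} bounds for $f_V$ (via the von Karman bracket calculus) and for the nonlocal cubic $f_B$, yield the estimate. The damping $kz_t$ supplies the exponential rate $\omega$, and every perturbation is either absorbed by it or is of the compact seminorm type on the right-hand side.

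\textbf{Step 3: Consequences.} Quasi-stability on $\mathcal{B}$ implies asymptotic smoothness of $(S_t,Y)$, which together with Step~1 yields existence of a compact global attractor $\mathcal{A}\subset\mathcal{B}$. The same quasi-stability inequality applied to time-translates of trajectories on $\mathcal{A}$ gives finite fractal dimension of $\mathcal{A}$ in $Y$. For the extra regularity $\mathcal{A}\subset W$, apply quasi-stability to the differentiated (in time) system on the invariant set $\mathcal{A}$: this produces uniform-in-$t$ bounds on $(u_t,u_{tt})$ in $Y$ along complete trajectories, and rearranging \eqref{plate} as $\Delta^2 u=p+Lu-f(u)-ku_t-u_{tt}$ together with elliptic regularity for the clamped biharmonic yields $u\in H^4(\Omega)$ uniformly on $\mathcal{A}$.

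The main obstacle is Step~1: squeezing ultimate dissipativity out of the non-dissipative $L$-term using only $ku_t$ and the compact lower-order bound of Proposition~\ref{potentiallowerbound}. Step~2 follows the now-standard quasi-stability blueprint for clamped nonlinear plates (Berger or von Karman), and Step~3 is essentially bookkeeping against the abstract quasi-stability theorems.
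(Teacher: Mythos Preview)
Your plan works for $f=f_B$ and essentially matches the paper there, but it contains a genuine gap for $f=f_V$: the quasi-stability estimate on the absorbing ball $\mathcal{B}$ is \emph{not} available for von Karman with arbitrary $k>0$. The obstruction is the term $\int_s^t(\mathcal{F}(z),z_t)\,d\tau$. In the Berger case one has the decomposition of Theorem~\ref{nonest} (Berger part) leading to \eqref{mmmbop}, which indeed yields quasi-stability on any bounded forward invariant set for any $k>0$. But in the von Karman case the corresponding decomposition \eqref{4.9a}--\eqref{4.9aa} produces the term
\[
P_0(z)=-\big(u^1_t,[u^1,v(z)]\big)-\big(u^2_t,[u^2,v(z)]\big)-\big(u^1_t+u^2_t,[z,v(u^1+u^2,z)]\big),
\]
which pairs velocities $u^i_t\in L^2(\Omega)$ with bracket terms that are only $L^2$ at the energy level; this is \emph{not} ``Lipschitz modulo compact'' in any useful sense on $\mathcal{B}$. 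The paper is explicit about this: quasi-stability on $\mathcal{B}$ for $f_V$ requires large $k$ (Theorem~\ref{largek1} and the remark closing Section~6.1), and without large $k$ the estimate \eqref{eq4.5} cannot be closed on a merely bounded set.

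The paper's route around this for $f_V$ is a two-stage argument you are missing. First, asymptotic smoothness is obtained \emph{not} from quasi-stability but via the Ceron--Lopes/Khanmamedov compensated-compactness criterion (Theorem~\ref{psi}): one shows $\liminf_m\liminf_n\Psi(y^n,y^m)=0$ using weak convergence and the compactness of $\Delta v(\cdot):H^2\to L^2$ (Theorem~\ref{smoothness}). This, together with your Step~1, gives the compact attractor $\mathcal{A}$. Second---and this is the key new idea---quasi-stability is established \emph{on $\mathcal{A}$ itself} by exploiting its compactness: for each $\epsilon>0$ choose a finite $\epsilon$-net $\{\phi_j\}\subset H_0^2(\Omega)$ for the set $\{u^i_t(\tau)\}\subset L^2(\Omega)$, replace $u^i_t$ by $\phi_{j_i(\tau)}$ in $P_0$, and use the extra $H^2$ regularity of $\phi_j$ to make $P_{j_1,j_2}(z)$ lower-order (Lemma~\ref{est*}, estimates \eqref{3new}--\eqref{4.12}). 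Only then do finite dimensionality and $\mathcal{A}\subset W$ follow as in your Step~3. Your proposal needs this $\epsilon$-net argument (or an equivalent device) to handle $f_V$ at arbitrary damping.
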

The existence of a compact attractor for a the non-gradient system without large damping coefficient $k$ has been shown (abstractly) in \cite[Section 8.3.2]{springer}. However, both smoothness and finite dimensionality of the attractor in this situation required large damping \cite[pp. 424--425]{springer}. Theorem \ref{th:main1} eliminates this requirement. 

Considering the Berger dynamics, $f_B$ in \eqref{plate}, we show the so called {\em quasi-stability} estimate (in the sense of $Y$) on any bounded forward invariant subset of $Y$ (see the Appendix and Section \ref{qsec}). This results in the following theorem.
\begin{theorem}\label{th:main2}
For $f=f_B$ and any $k>0$  there exists a generalized fractal exponential attractor $\widetilde{\mathcal A}_{\text{exp}}\subset Y$ (finite dimension in $\widetilde Y \equiv L^2(\Omega) \times H^{-2}(\Omega)$) for  $(S_t,Y)$. 
\end{theorem}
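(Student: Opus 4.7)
The plan is to apply the abstract theorem on the existence of generalized fractal exponential attractors (see Appendix) that stems from the quasi-stability framework: once we produce a bounded forward-invariant absorbing set $\mathcal{B}\subset Y$ on which $(S_t,\mathcal{B})$ satisfies a quasi-stability inequality in $Y$ with compensator in a weaker space, and $S_t$ is H\"older continuous in time with values in that weaker space, a generalized fractal exponential attractor with finite dimension in $\widetilde Y = L^2\times H^{-2}$ follows at once.

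First I would establish dissipativity, i.e., the existence of an absorbing set in $Y$. From the energy identity \eqref{energyrelation} one has
\[
\mathcal E(t) + k\int_s^t \|u_t(\tau)\|^2 \, d\tau = \mathcal E(s) + \int_s^t (Lu(\tau),u_t(\tau)) \, d\tau,
\]
while the potential bound of Proposition \ref{potentiallowerbound} gives $\mathcal E \geq cE^*-M$. Because $L:H^{2-\sigma}\to L^2$ is of lower order relative to the finite-energy topology, a barrier argument together with the coercive quartic Berger potential $\tfrac14\|\nabla u\|^4$ absorbs the indefinite term $\int (Lu,u_t)$ and produces an absorbing ball of radius $R$ in $Y$. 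The set $\mathcal{B}=\overline{\bigcup_{t\ge t_0}S_t(B_R)}^{\,Y}$ is then bounded and forward-invariant in $Y$.

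Next I would derive the quasi-stability inequality on $\mathcal{B}$. For two generalized solutions $u^1,u^2$ with data in $\mathcal{B}$, the difference $z=u^1-u^2$ satisfies
\[
z_{tt}+\Delta^2 z+kz_t+f_B(u^1)-f_B(u^2)=Lz,
\]
where the Berger difference factors as
\[
f_B(u^1)-f_B(u^2)=\bigl(b-\|\nabla u^1\|^2\bigr)\Delta z - (\nabla u^1+\nabla u^2,\nabla z)\,\Delta u^2.
\]
Since $u^1,u^2$ are bounded in $H^2$, both summands are controlled in a form that is lower-order (compact) relative to $\|\Delta z\|^2+\|z_t\|^2$. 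Testing the $z$-equation with the multiplier $z_t+\alpha z$ for $\alpha>0$ small, absorbing $|(Lz,z_t)|$ and the nonlinear pairing via Young's inequality with the dissipation $k\|z_t\|^2$ and Sobolev interpolation $\|z\|_{2-\eta}^2\le \epsilon E_z+C_\epsilon\|z\|_{2-\eta-\delta}^2$, and applying a Gronwall argument yields the stabilizability estimate
\[
\|S_ty_1-S_ty_2\|_Y^2 \leq Ce^{-\gamma t}\|y_1-y_2\|_Y^2 + C\sup_{s\in[0,t]} \|z(s)\|_{2-\eta}^2
\]
for some $\eta>0$. This is exactly quasi-stability in the sense of $Y$ with compensator $\|\cdot\|_{2-\eta}$, which by Rellich compactness is weaker than the topology of $Y$.

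The main obstacle is the dissipativity step: the non-gradient term $Lu$ destroys the Lyapunov structure of the gradient case, so the indefinite $\int(Lu,u_t)$ does not close on its own, and the trade with $k\|u_t\|^2$ must be mediated by the coercive nonlinear potential and the lower regularity of $L$. Once the absorbing set is in hand, the quasi-stability calculation is standard for the Berger polynomial nonlinearity. Finiteness of fractal dimension in $\widetilde Y = L^2(\Omega)\times H^{-2}(\Omega)$ then follows from the abstract theorem, since $\widetilde Y$ is the natural time-integrated compensator space for $\|\cdot\|_{2-\eta}$, and the required Lipschitz-in-time bound $\|S_ty - S_sy\|_{\widetilde Y}\leq C|t-s|$ is immediate from the equation viewed with $u_{tt}\in H^{-2}$.
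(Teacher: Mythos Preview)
Your overall architecture is exactly what the paper does: ultimate dissipativity via a Lyapunov argument exploiting the quartic Berger potential (the paper's Theorem~\ref{disp}), then the quasi-stability estimate on the absorbing ball, then H\"older continuity in $\widetilde Y=L^2\times H^{-2}$, then the abstract Theorem~\ref{expattract*}. The dissipativity and H\"older steps are fine as you sketch them.

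The gap is in your quasi-stability step, specifically in the sentence ``both summands are controlled in a form that is lower-order (compact) relative to $\|\Delta z\|^2+\|z_t\|^2$.'' The second summand $-(\nabla u^1+\nabla u^2,\nabla z)\,\Delta u^2$ is indeed lower order (its $L^2$ norm is bounded by $C(R)\|z\|_1$), but the first summand $(b-\|\nabla u^1\|^2)\Delta z$ is \emph{at energy level}: its $L^2$ norm is a bounded constant times $\|\Delta z\|$. Hence the pairing $(b-\|\nabla u^1\|^2)(\Delta z,z_t)$ is of order $E_z$ with a coefficient of size $C(R)$, and your proposed absorption ``via Young's inequality with the dissipation $k\|z_t\|^2$'' cannot close for arbitrary $k>0$---it would force exactly the large-damping hypothesis that this theorem is meant to avoid.

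What rescues the Berger case (and is the content of the paper's Lemma~\ref{decomp}) is structure, not smallness: one writes $(\Delta z,z_t)=-\tfrac12\tfrac{d}{dt}\|\nabla z\|^2$, so that after integration in time
\[
\int_s^t (b-\|\nabla u^1\|^2)(\Delta z,z_t)\,d\tau
= -\tfrac12\Big[(b-\|\nabla u^1\|^2)\|\nabla z\|^2\Big]_s^t - \int_s^t(\Delta u^1,u^1_t)\,\|\nabla z\|^2\,d\tau,
\]
and both right-hand terms are genuinely lower order (controlled by $\sup\|z\|_{2-\eta}^2$ and $\epsilon\int E_z$ respectively). Combined with the observability inequality (Lemma~\ref{le:observbl}) this yields the key estimate \eqref{usethis} and hence quasi-stability on any bounded forward-invariant set for every $k>0$. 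Once you insert this step, your proof matches the paper's.
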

Theorem \ref{largek1} presented below was shown in \cite[Theorem 9.5.13]{springer}. To obtain the quasi-stability estimate on a bounded forward invariant set when $L\not\equiv 0$ for $f=f_V$, one can take sufficiently large damping. We include this result here for the sake of comparison with the novel results herein.
\begin{theorem}\label{largek1}
For $f=f_V$, when $L \not\equiv 0$ and $k$ sufficiently large (depending on the internal parameters of the problem: $\Omega, L, F_0,p$) there exists a generalized fractal exponential attractor  $\widetilde{\mathcal A}_{\text{exp}}\subset Y$  (finite dimension in $\widetilde Y \equiv L^2(\Omega) \times H^{-2}(\Omega)$) for $(S_t,Y)$.
\end{theorem}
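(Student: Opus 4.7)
The plan is to verify the quasi-stability framework in the finite-energy space $Y$---with auxiliary weaker topology $\widetilde Y = L^2(\Omega) \times H^{-2}(\Omega)$---on a bounded absorbing set for $(S_t,Y)$, and then invoke the abstract generalized exponential attractor construction recorded in the Appendix. The characteristic difficulty relative to the Berger case of Theorem \ref{th:main2} is that the von Karman nonlinearity $f_V$ does not provide the extra flexibility that would render the $(Lu,u_t)$ and $(Lz,z_t)$ contributions harmless; hence one is forced to require a large damping $k$ to dominate these destabilizing terms both at the level of the full dynamics and at the level of the difference equation.

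First I would extract a bounded absorbing set $\cB \subset Y$. Combining the energy identity \eqref{energyrelation} with the equipartition multiplier $u$ applied to \eqref{plate}, and using Proposition \ref{potentiallowerbound} to dominate the lower-order terms in $\cE$, one obtains a Lyapunov-type differential inequality of the form
\begin{equation*}
\frac{d}{dt}\mathcal V(t) + \omega \mathcal V(t) \le C + |(Lu,u_t)|,
\end{equation*}
with $\mathcal V(t)$ equivalent to $E^*(t)$ modulo additive constants. Since $L : H^{2-\s} \to L^2$ is continuous by \eqref{el}, one estimates $|(Lu,u_t)| \le \eta \|u_t\|^2 + C_\eta \|u\|_{2-\s}^2$; taking $k$ large absorbs the $\|u_t\|^2$ term into the damping contribution produced by \eqref{energyrelation}, while the subcritical $\|u\|_{2-\s}^2$ piece is absorbed into $\omega \mathcal V$ using Proposition \ref{potentiallowerbound} with small $\epsilon$. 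Gronwall then delivers the absorbing set in $Y$.

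Second, on $\cB$ I would establish the stabilizability estimate. For two trajectories with data in $\cB$, the difference $z = u - \bar u$ satisfies, with clamped boundary conditions,
\begin{equation*}
z_{tt} + \Delta^2 z + k z_t = Lz - \bigl[f_V(u) - f_V(\bar u)\bigr].
\end{equation*}
The central locally-Lipschitz estimate for the von Karman bracket (see \cite{springer}) provides, on $\cB$, a bound of the form $\|f_V(u) - f_V(\bar u)\|_{-\s} \le C(\cB)\, \|z\|_{2-\s}$ for some $\s > 0$, so the nonlinear difference is a compact perturbation relative to the $Y$-topology. Applying the multipliers $z_t$ and $z$ to the $z$-equation in the standard equipartition fashion, combined with $|(Lz,z_t)| \le \eta \|z_t\|^2 + C_\eta \|z\|_{2-\s}^2$, and choosing $k$ sufficiently large---depending on $\Omega, L, F_0, p$, and the radius of $\cB$---to dominate the accumulated coefficients of $\|z_t\|^2$, one arrives at the quasi-stability inequality
\begin{equation*}
\|S_T y_0 - S_T \bar y_0\|_Y^2 \le C e^{-\omega T} \|y_0 - \bar y_0\|_Y^2 + C \sup_{\tau \in [0,T]} \|z(\tau)\|_{2-\s}^2,
\end{equation*}
where the seminorm $\|\cdot\|_{2-\s}$ is compactly embedded into $\|\cdot\|_2$ and controls the $\widetilde Y$-topology.

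Finally, the abstract result recorded in the Appendix (quasi-stability on $\cB$ with respect to a seminorm compactly embedded in $Y$ yields a generalized fractal exponential attractor whose fractal dimension is finite in the topology associated to that seminorm) converts the above estimate directly into the claimed $\widetilde{\mathcal A}_{\text{exp}}$. The main obstacle---and the reason large $k$ is required precisely for $f_V$---is the simultaneous need to absorb $(Lu,u_t)$ in the absorbing-set step and $(Lz,z_t)$ in the stabilizability step against a nonlinearity which, unlike the Berger bracket, does not leave free room via its potential component to circumvent this trade-off; thus $k_{\text{min}}$ must be chosen to exceed the worst of the constants produced by the two multiplier computations.
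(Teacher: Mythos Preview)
Your overall strategy---quasi-stability on an absorbing ball followed by Theorem \ref{expattract*}---matches the paper's. Two issues, however.

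First, a minor misattribution: the absorbing set $\cB$ exists for \emph{any} $k>0$ (this is Theorem \ref{disp}). The small Lyapunov parameter $\nu$, together with Proposition \ref{potentiallowerbound}, handles the $(Lu,u_t)$ term; large damping plays no role there. The largeness of $k$ enters only at the quasi-stability step on $\cB$, where---as the paper puts it---the estimate ``follows directly from \eqref{enest1} and \eqref{stufff} with $k$ sufficiently large.'' The obstruction for $f_V$ is not the $(Lz,z_t)$ term (that is already lower order and absorbed into $C(T,R)\sup\|z\|_{2-\eta}^2$ in Lemma \ref{le:observbl}), but the nonlinear term $\int(\cF(z),z_t)$: for von Karman the decomposition \eqref{4.9a}--\eqref{4.9aa} leaves the $P_0$ piece, which (unlike the Berger estimate \eqref{mmmbop}) cannot be made $\epsilon$-small on a generic bounded set without compactness or extra damping.

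Second, and this is a genuine gap: you have omitted the H\"older continuity hypothesis of Theorem \ref{expattract*}. Quasi-stability on $\cB$ alone does \emph{not} yield a generalized fractal exponential attractor; one must also verify that $t\mapsto S_t y$ is uniformly H\"older continuous into some extended space $\widetilde H\supset Y$, and it is precisely this space in which the fractal dimension is finite. The paper devotes half of its proof to checking this in $\widetilde Y=L^2(\Omega)\times H^{-2}(\Omega)$: one tests the equation against $\zeta\in H_0^2(\Omega)$, integrates in time, and uses the local Lipschitz bound \eqref{f-est-lip} to obtain $\|S_ty-S_sy\|_{\widetilde Y}\le C(R)|t-s|$. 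Your final paragraph conflates the compact seminorm $\|\cdot\|_{2-\sigma}$ in the quasi-stability estimate with the $\widetilde Y$-topology; these are distinct ingredients, and the conclusion $\dim_f^{\widetilde Y}\widetilde{\cA}_{\text{exp}}<\infty$ comes from the H\"older step you have not supplied.
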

With large damping in the case of Berger's dynamics $f=f_B$ we can improve the generalized fractal exponential attractor to a proper exponential attractor:
\begin{theorem}\label{largek2} For $f=f_B$ and $k$ sufficiently large (depending on the internal parameters of the problem: $\Omega, L, b,p$) there exists a fractal exponential attractor ${\mathcal A}_{\text{exp}} \subset Y$, with finite fractal dimension in $Y$ for $(S_t,Y)$. 
\end{theorem}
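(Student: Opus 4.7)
The plan is to sharpen Theorem \ref{th:main2} by proving quasi-stability in the state space $Y$ itself (not merely in $\widetilde Y = L^2(\Omega)\times H^{-2}(\Omega)$). Once such an inequality is available on a bounded forward-invariant set, the abstract exponential-attractor theorem recorded in the Appendix produces a compact, finite-fractal-dimensional invariant set in $Y$ which exponentially attracts bounded subsets of $Y$ in the $Y$ topology. The only hypothesis that changes compared with the setting of Theorem \ref{th:main2} is that $k$ is now allowed to be chosen large, and this freedom will be used to dominate both the non-dissipative operator $L$ and the sign-indefinite Berger tension coefficient.

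First I would promote the attractor $\mathcal A\subset W$ given by Theorem \ref{th:main1} to a closed, bounded, forward-invariant absorbing set $\mathcal B\subset Y$ which is additionally bounded in $W$. Such a $\mathcal B$ is obtained by the decomposition $S_t = S_t^L + K_t$ into an exponentially stable linear flow and a compact smoothing remainder already used in the proof of Theorem \ref{th:main1}, followed by thickening a ball around $\mathcal A$ in $W$ under the semiflow. For two trajectories $u,w$ with data in $\mathcal B$, the difference $z = u-w$ satisfies
\begin{equation*}
z_{tt} + \Delta^2 z + k z_t \;=\; \big(f_B(w) - f_B(u)\big) + Lz,
\end{equation*}
and the Berger nonlinearity admits the decomposition
\begin{equation*}
f_B(u)-f_B(w) \;=\; \big[b - \|\nabla u\|^2\big]\Delta z \;-\; \big(\nabla z,\nabla(u+w)\big)\,\Delta w,
\end{equation*}
in which the first piece is linear in $\Delta z$ with an indefinite but uniformly bounded coefficient, and the second piece is lower-order in $z$ but carries $\Delta w$, which is bounded uniformly in time because $\mathcal B$ is bounded in $W$.

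Next I would run the standard multiplier method on the $z$-equation, testing with $z_t + \eta z$ for a small $\eta>0$, producing an energy identity for $\widetilde E_z(t) \approx \|(z,z_t)\|_Y^2$. The dissipation $k\|z_t\|^2$ must simultaneously absorb three contributions: the non-dissipative term $(Lz,z_t+\eta z)$, which by \eqref{el} is controlled by interpolating $\|z\|_{2-\sigma}$ between $Y$ and a compact lower-order norm; the indefinite Berger term $([b-\|\nabla u\|^2]\Delta z,\, z_t+\eta z)$, whose coefficient is uniformly bounded on $\mathcal B$; and the tension perturbation $\big((\nabla z,\nabla(u+w))\,\Delta w,\; z_t+\eta z\big)$, which is purely compact because it carries an $H^1$-factor of $z$ multiplied by the uniformly bounded $\Delta w$. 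Choosing $k>k_{\text{min}}$, with $k_{\text{min}}$ depending on $\Omega$, $L$, $b$, $p$, and the $W$-radius of $\mathcal B$, yields a differential inequality of the form
\begin{equation*}
\frac{d}{dt}\widetilde E_z + \gamma\, \widetilde E_z \;\le\; C\,\|z\|_{2-\sigma}^2,
\end{equation*}
and Gronwall's inequality then produces the quasi-stability estimate
\begin{equation*}
\|S_t y_1 - S_t y_2\|_Y^2 \;\le\; C_1 e^{-\gamma t}\|y_1 - y_2\|_Y^2 \;+\; C_2 \sup_{0\le s\le t}\|u_1(s)-u_2(s)\|_{2-\sigma}^2
\end{equation*}
on $\mathcal B$, with a compact lower seminorm on the right.

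The abstract exponential-attractor theorem then yields a compact $\mathcal A_{\text{exp}}\subset \mathcal B\subset Y$ of finite fractal dimension in $Y$ that exponentially attracts bounded subsets of $Y$ in the $Y$ norm. The main obstacle will be the coordinated choice of $k_{\text{min}}$: since the Berger coefficient $b-\|\nabla u\|^2$ is controlled only in $L^\infty_t$ on $\mathcal B$, and since the non-dissipative operator $L$ and the $\eta z$-multiplier each contribute additional top-order terms, careful bookkeeping in the parameters $\epsilon$, $\eta$, $k$ is required so that a single threshold closes all competing inequalities at once. The boundedness of $\mathcal B$ in $W$ is essential here, because it alone converts the otherwise top-order factor $\|\Delta w\|$ into a harmless constant, allowing the tension perturbation to land entirely in the compact seminorm rather than on the $Y$-side of the estimate.
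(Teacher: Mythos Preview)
Your proposal has the right overall architecture---work on a forward-invariant set that is bounded in $W$, so that H\"{o}lder continuity holds in $Y$ rather than merely in $\widetilde Y$---but it buries the main difficulty in a one-line preliminary.

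The genuine gap is your Step 1. You assert that a closed, bounded, forward-invariant absorbing set $\mathcal B$ which is additionally bounded in $W$ can be produced from ``the decomposition $S_t = S_t^L + K_t$ \ldots already used in the proof of Theorem \ref{th:main1}, followed by thickening a ball around $\mathcal A$ in $W$ under the semiflow.'' No such decomposition appears in the proof of Theorem \ref{th:main1}; that proof proceeds via the quasi-stability estimate on $\mathcal A$ and elliptic regularity, and never constructs a $W$-bounded invariant neighborhood. Moreover, ``thickening'' a $W$-ball around $\mathcal A$ under $S_t$ does not obviously yield a set that stays bounded in $W$: propagation of $W$-regularity along individual trajectories is not the same as a uniform-in-time $W$-bound on an invariant neighborhood, and for this non-gradient system there is no ready-made dissipativity in $W$. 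Producing such a set is precisely the content of Theorem \ref{expset} (equivalently Theorem \ref{stuff}), which the paper proves by the nonlinear splitting $u=z+w$ of \eqref{exp}--\eqref{smooth}, the time-differentiated $w$-equation \eqref{whatthe}, and the variation-of-parameters estimate \eqref{varpar0}; large $k$ enters exactly here, to make the semigroup margin $\omega(k,K)$ beat the constant $C(R)$ in \eqref{keyvK}. This is the heart of the proof, not a preamble.

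You also misdiagnose the role of large $k$ in the quasi-stability step. For $f=f_B$, inequality \eqref{usethis} already gives the quasi-stability estimate on \emph{any} bounded forward-invariant subset of $Y$, for \emph{any} $k>0$, with no need for $W$-boundedness or for your multiplier argument using $\|\Delta w\|\le C$. The upgrade from ``generalized'' to ``proper'' fractal exponential attractor is not a matter of strengthening quasi-stability; it is a matter of establishing H\"{o}lder continuity of $t\mapsto S_t y$ in the $Y$-topology (Hypothesis 4 of Theorem \ref{exp2}, or $\widetilde H=Y$ in Theorem \ref{expattract*}). That H\"{o}lder bound requires control of $\|u_{tt}(t)\|_0$ and $\|u_t(t)\|_2$, which is exactly what membership in the smooth set $\mathcal S\subset W$ delivers. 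Once you have $\mathcal S$ from Theorem \ref{stuff}, the paper closes via Theorem \ref{exp2}: quasi-stability on $\mathcal S$ is inherited from \eqref{usethis}, H\"{o}lder continuity in $Y$ follows from the $W$-bound, and transitivity of exponential attraction passes the result back to all of $Y$.
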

Theorem \ref{largek2} is obtained (in part) by operating directly on a smooth set we construct in Section \ref{decomp*}. This set is uniformly exponentially attracting, though we do not have control over its dimensionality (even in the state space $Y$). We state this as an independent theorem here:
\begin{theorem}\label{expset}
For $f=f_B$ and $k$ sufficiently large (depending on the internal parameters of the problem: $\Omega, L, b,p$) there exists a smooth set $\mathcal S$ (smooth in the sense of $\mathcal S \subset W$), which is uniformly exponentially attracting for $(S_t,Y)$. 
\end{theorem}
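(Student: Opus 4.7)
The plan is to construct $\mathcal{S}$ via an explicit decomposition $u = w + z$ of trajectories starting in an absorbing set, in which $w$ decays exponentially in $Y$ and $z$ eventually lies in a bounded subset of $W$. The large damping parameter $k$ is used twice: to stabilize the linear part $w$ and to produce the smoothing needed for $z$.

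From Theorem \ref{th:main1} one has a bounded absorbing set $B \subset Y$ for $(S_t,Y)$; along trajectories in $B$, the Berger coefficient $\alpha(t) := \|\nabla u(t)\|^2$ is uniformly bounded. For $y_0 = (u_0,u_1) \in B$ and $u(t) = S_t y_0$, I would split $u = w + z$ via
\begin{align*}
&w_{tt} + \Delta^2 w + k w_t + b \Delta w = 0, & &(w,w_t)\big|_{t=0} = (u_0,u_1), \\
&z_{tt} + \Delta^2 z + k z_t + b \Delta z = p + Lu + \alpha(t)\Delta u, & &(z,z_t)\big|_{t=0} = (0,0),
\end{align*}
both with clamped boundary data. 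For $k$ sufficiently large relative to $b$ and the spectrum of the clamped biharmonic $A$, standard linear-semigroup analysis yields exponential decay $\|(w(t),w_t(t))\|_Y \le M e^{-\omega t}\|y_0\|_Y$.

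For the $z$-equation, the forcing $g := p + Lu + \alpha(t)\Delta u$ is uniformly bounded in $L^2(\Omega)$ along $B$ (using \eqref{el} and Proposition \ref{potentiallowerbound}); together with large $k$ this first gives a uniform $Y$-bound on $(z,z_t)$ after an initial transient. To pass up to $W$, I would perform a higher-regularity multiplier estimate: test the $z$-equation with $A z_t = \Delta^2 z_t$, integrate by parts, and absorb the top-order term via the $k\|\Delta z_t\|^2$ contribution, producing an inequality of the form
\begin{equation*}
\frac{d}{dt}\bigl(\|Az\|^2 + \|\Delta z_t\|^2\bigr) + c k \|\Delta z_t\|^2 \le C\bigl(\|g\|^2 + \text{l.o.t.}\bigr),
\end{equation*}
which, combined with the prior $Y$-bound, yields $(z(t),z_t(t))$ uniformly bounded in $W$ for $t \ge t_0(B)$. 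Setting $\mathcal{S} := \overline{\{(z(t),z_t(t)) : y_0 \in B,\ t \ge t_0\}}^{\,Y}$, one has $\mathcal{S} \subset W$ with a $W$-bound, and for any bounded $D \subset Y$, absorbing $D$ into $B$ at some $t_1(D)$ gives $\operatorname{dist}_Y(S_t y_0,\mathcal{S}) \le \|(w(t-t_1),w_t(t-t_1))\|_Y \le M' e^{-\omega(t - t_1)}$ uniformly over $y_0 \in D$.

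The hard step is the $W$-regularity of $z$. The coefficient $\alpha(t)$ is only continuous in $t$ along a generalized $Y$-trajectory (not $C^1$), so differentiating the $z$-equation in time to bootstrap regularity is not directly available. One must instead remain at the undifferentiated level and extract smoothing from the quasi-parabolic structure that large $k$ provides, namely the identity $\Delta^2 z = g - z_{tt} - k z_t - b\Delta z$ which lets the $k z_t$ term absorb the top-order energy after the $A z_t$-multiplier is applied. Making this rigorous for generalized (semigroup) solutions rather than smooth ones requires a strong-approximation argument and careful handling of the nonlocal coefficient $\alpha$ under the $\Delta^2$ multiplier.
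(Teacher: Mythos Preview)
Your decomposition has a structural defect that blocks the $W$-regularity step, and the $Az_t$-multiplier estimate you write down does not hold.

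Specifically, with your splitting the forcing in the $z$-equation is $g = p + Lu + \alpha(t)\Delta u$, which lies only in $L^2(\Omega)$ (since $u$ is merely $H^2_0$). Testing with $Az_t = \Delta^2 z_t$ produces the pairing $(g,\Delta^2 z_t)$, which cannot be bounded by $C\|g\|^2$ plus lower-order terms: you would need either $g\in H^2$ or $z_t\in H^4$, neither of which is available. Large $k$ gives you $k\|\Delta z_t\|^2$ on the left, but this controls $\|z_t\|_2$, not $\|z_t\|_4$, so it cannot absorb the bad term. Nor does the time-differentiation route rescue your splitting (and incidentally, $\alpha(t)=\|\nabla u(t)\|^2$ \emph{is} $C^1$ along a generalized trajectory, with $\alpha'(t)=-2(\Delta u,u_t)$): differentiating your forcing yields $\alpha(t)\Delta u_t$, and $\Delta u_t$ lives only in $H^{-2}$, so the time-differentiated $z$-equation has a forcing that is not $L^2$.

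The paper avoids this by decomposing differently: it keeps the nonlinear coefficient multiplying the \emph{smooth unknown}, writing $\mathcal F(w,u)=[b-\|\nabla u\|^2]\Delta w$ in the equation for the regular part $w$ (with null data), and placing the initial data on the exponentially decaying piece. Then, after time-differentiation ($\overline w = w_t$), the worst term is $[b-\|\nabla u\|^2]\Delta \overline w$, which is exactly of the order being estimated and can be absorbed via a variation-of-parameters argument using the exponential decay rate $\omega(k,K)$ of the linear damped plate for $k$ large. The resulting $H^2$-bound on $w_t$, plus elliptic regularity, then yields $w\in H^4$. The key structural point---and the one your decomposition loses---is that the Berger nonlinearity must sit on the component whose higher regularity you are trying to establish, so that its time derivative produces $\Delta w_t$ rather than $\Delta u_t$.
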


\subsubsection{Past Results in Relation to Work Herein}
The problem at hand---the long-time behavior of a non-conservative, non-rotational, nonlinear plate---is mathematically and physically compelling. It arises in engineering application in the study of the flutter phenomenon, specifically, the piston-theoretic reduction of a model for a fluttering plate. The analysis is challenging primarily for the reasons described below.

 This is a non-gradient problem, owing to the non-dissipative lower-order term $Lu$. Despite the presence of this term,  using the nonlinear effects in  the system one {\em can} show the existence of a compact global attractor (see \cite[Section 9.4]{springer}). The attractor consists of all bounded full trajectories; however, for such non-gradient dynamics there is no further characterization of the attractor as the unstable manifold of the stationary set. This precludes the ability to use the powerful technique of {\em backward smallness} of velocities to obtain the quasi-stability estimate.

Additionally, the presence of the non-dissipative term destroys the energy relation \eqref{energyrelation} for the dynamics. This immediately clouds the issue of global-in-time boundedness of solutions (and ultimate dissipativity of the associated dynamical system); moreover, the finiteness of the dissipation integral~ $\ds k \int_0^{\infty} ||u_t||^2$ ~is lost, which, although not a central issue here, can impact the analysis of solutions in higher topologies. 

In the absence of rotational inertia effects (in line with the engineering literature) \cite{dowell} the nonlinearities associated with {\em large deflections} are {\em critical}. This is to say they do not act in a compact fashion on the finite energy space. For this reason, modern techniques and compensated compactness methods \cite{springer,kh} must be utilized in the study of long-time behavior. 

Generally, there  are two fundamental properties of  a dynamical system which form the cornerstones of the supporting theory; these are: dissipativity (in the sense of energies, in contrast to {\em ultimate dissipativity}) and smoothing for the dynamics. The property of dissipativity provides a priori bounds, and leads to the construction of well-structured Lyapunov function. Smoothing (a gain of regularity) leads to compactness, and convergence of trajectories. These are the building blocks characterizing long-time behavior analyses. The problem addressed in this work cannot appeal to either of these two properties. 

The loss of a gradient structure for the dynamics, along with insufficient regularity, prevents the  applicability of  standard methods addressing {\em asymptotic smoothness} (see the Appendix).  In order to compensate, one resorts to the following strategy:  to rebound from loss of dissipativity, the nonlinearity plays a critical role (and provides boundedness of orbits). It is precisely the physical, (superlinear) nonlinearity which converts escaping orbits to  bounded, but (potentially) chaotic, orbits. Of course, the  nonlinearity---with its lack of smoothing effects---further contributes to a ``lack of compactness". It is here where ``compensated compactness" is key; in other words, a special structure for the nonlinearity  plays a major role. 
Thus, harvesting ultimate dissipativity from nonlinear effects, and harvesting ``smoothness" from the structure   the of nonlinear terms are the main points in the game. More specific information is included below. 
\begin{itemize}
\item For standard boundary conditions and nonlinear internal damping (under appropriate assumptions) the existence of a compact global attractor is known {\em in the presence of} $L \ne 0$ for {\em any damping coefficient} (this is perhaps the principal theme in \cite{springer}). 
\item If the damping coefficient is large, OR $L \equiv 0$, then the quasi-stability estimate can be shown on the compact global attractor to get finite dimensionality and smoothness of the attractor. The technique from \cite{glw,delay} (showcased here) circumvents the need for backward-in-time-smallness, the key ingredient utilized in \cite{springer} which requires the dynamics to be gradient (i.e., $L\equiv 0$).
\item On \cite[p. 526]{springer}, it is noted that with linear damping and with $L\equiv 0$ the attractor can be shown to be exponential. This appeals to the gradient structure of the problem, but only insofar as to obtain the quasi-stability estimate on the attractor. 
\item \cite[pp. 59--60]{Memoires} deals specifically with the Berger nonlinearity. Existence of a smooth and finite dimensional attractor for the dynamics is shown  only for dissipative dynamics. The latter properties, as in \cite{springer}, are based on proving the quasi-stability on the attractor. However, existence of exponential attractors requires more subtle estimates. These are given in the present paper. 
\item The works \cite{bucci1,bucci2} consider internal damping for a wave-plate system (which includes a Berger plate). In \cite{bucci1} the system is dissipative, and attractors are investigated. In \cite{bucci2}, the dynamics remain dissipative and thermal effects are considered in the plate; here, results on exponential attractors are established.
\end{itemize}
Thus, the primary novel facets of the present treatment are the lack of dissipativity in the dynamics and the lack of smoothing effects.
\begin{remark} 
In this treatment we focus on linear damping, though the technology certainly exists to address nonlinear interior damping. The primary reason for this concerns the fact that, in application, the model of interest has linear damping built-in (the $\mu$ component of the RHS in \eqref{plate-stand}). Secondly, our primary focus is to showcase techniques in the theory of quasi-stability (and their relation to fractal exponential attractors) and a novel decomposition of the dynamics for the Berger plate which leads to the direct construction of a fractal exponential attractor. 
\end{remark}

\subsubsection{Principal Contributions of This Work}
The primary contributions of this treatment are: 

\begin{enumerate}
\item We provide a side-by-side analysis of piston-theoretic Berger and von Karman plates with interior damping (arising in application), and the corresponding study of global attractors and exponential attractors, detailing where the dynamics (and analytical techniques) diverge for the two models.

\item We perform a modern quasi-stability analysis \cite{quasi, springer} on these non-gradient plate dynamics. Our work herein demonstrates the power of the quasi-stability approach; namely, it simplifies and unifies many of the ground-breaking and fundamental studies over the past 30 years on the qualitative properties of dissipative dynamics, (e.g., compact global attractors and exponential attractors). This quasi-stability analysis demonstrates that at the heart of the long-time behavior analysis is a decomposition of the difference dynamics into a stable component and compact (lower order) component. In essence, one estimate (though perhaps on various types of sets) yields an immense amount of ``mileage". We believe the quasi-stability approach to be of great value to others, and this treatment serves as a testament to this. We demonstrate the relevant techniques and showcase the powerful theorems. 

\item We consider the full piston-theoretic model, and provide a modern analysis of compact global attractors for such plate models with their non-dissipative terms. We carefully note the effect of interior damping (its presence {\em and} its size).  Much of the work to date (see the above discussion) on smooth, finite dimensional (and exponential) attractors involves one of two assumptions (see \cite{springer} and references therein): either, one assumes (i) a gradient structure (in order to obtain the quasi-stability estimate or something like it), or (ii) one takes large damping in order to obtain control of non-gradient (non-dissipative) terms. After showing the existence of a compact attractor for solutions, we utilize a recent  finite-mesh technique \cite{delay,glw} to parlay the existence of a compact attractor into the quasi-stability estimate on the attractor. This leads to finite dimensionality and smoothness of the attractor for the von Karman dynamics (\eqref{plate} with $f_V$) without assuming $L\equiv 0$ OR a large damping coefficient $k$.  

\item Additionally, we utilize the techniques in \cite{springer} which lead to generalized fractional exponential attractors via the quasi-stability estimate on an absorbing ball. Specifically, for $f=f_B$ we obtain a generalized fractal exponential attractor with interior damping of arbitrary size; for $f=f_V$ we will require large damping.

\item In the case of the Berger plate $f=f_B$, we additionally show that by incorporating sufficiently large damping, we can directly construct an exponentially attracting set which is smooth. Then via the transitivity property of exponential attraction (Theorem \ref{trans}) and a resulting theorem (Theorem \ref{exp2}), we conclude that the generalized fractal exponential attractor coming from the previous paragraph is actually of finite dimension in the state space, making it a proper fractal exponential attractor. 
\end{enumerate}

\section{Quasi-Stability and Practical Applications}\label{qsec}
Here we define quasi-stability as our primary tool in the long-time behavior analysis.  A quasi-stable dynamical system is one where the difference of two trajectories can be decomposed into a uniformly stable part and a compact part, with controlled scaling of the powers. The theory of quasi-stable dynamical systems has been developed rather thoroughly in recent years \cite{quasi,springer}. This includes more general definitions of quasi-stable dynamical systems \cite{quasi} than what we present below. For ease of exposition and application in our analysis we focus on this more narrow definition. 

Informally, we note that: 
\begin{itemize}
\item Obtaining the quasi-stability estimate on the global attractor $A$ implies additional smoothness and finite dimensionality $A$. This follows from the so called squeezing property and one of Ladyzhenskaya's theorems (see \cite[Theorems 7.3.2 and 7.3.3]{springer}).
\item Obtaining the quasi-stability estimate on an absorbing ball implies the existence of an exponentially attracting set; uniform in time H\"{o}lder continuity (in some topology) yields finite dimensionality of this exponentially attracting set (in said topology). 
\end{itemize}
We now proceed with a formal discussion of quasi-stability.

\begin{condition}\label{secondorder} Consider second order (in time) dynamics $(S_t,H)$ where $H=X \times Z$ with $X,Z$ Banach, and $X$ compactly embeds into $Z$.  Further, suppose $y= (x,z) \in H$ with $S_ty =(x(t),x_t(t))$ where the function $x \in C(\mathbb R_+,X)\cap C^1(\mathbb R_+,Z)$. 
\end{condition}
With Condition \ref{secondorder} we restrict to second order, hyperbolic-like evolutions.
\begin{definition}\label{quasidef}
With Condition \ref{secondorder} in force, suppose that the dynamics $(S_t,H)$ admit the following estimate for $y_1,y_2 \in B \subset H$:
\begin{equation}\label{specquasi}
||S_ty_1-S_ty_2||_H^2 \le e^{-\gamma t}||y_1-y_2||_H^2+C_q\sup_{\tau \in [0,t]} ||x_1-x_2||^2_{Z_*}, ~~\text{ for some }~~\gamma, C_q>0,.
\end{equation} where $Z \subseteq Z_* \subset X$ and the last embedding is compact. Then we say that $(S_t,H)$ is {\em quasi-stable} on $B$.
\end{definition}
\begin{remark}\label{genquas} As mentioned above, the definition of quasi-stability in the key references \cite{quasi,springer} is much more general; specifically, the estimate in \eqref{specquasi} can be replaced with: \begin{equation}\label{genquaseq}
||S_ty_1-S_ty_2||_H^2 \le b(t)||y_1-y_2||_H^2+c(t)\sup_{\tau \in [0,t]} [\mu_H(S_ty_1-S_ty_2)]^2,  \end{equation} 
where: (i) $b(\cdot)$ and $c(\cdot)$ are nonnegative scalar functions on $\mathbb R_+$ such that $c(t)$ is locally bounded on $[0,\infty)$ and $b \in L^1(\mathbb R_+)$ and $\ds \lim_{t \to \infty} b(t) = 0$; (ii) $\mu_H$ is a compact seminorm on $H$. In fact, this definition is recent \cite{quasi}, and is more general than that in \cite{springer}, accommodating a broader class of nonlinear dynamical systems arising the in the long-time analysis of plate models. \end{remark}

We now run through a handful of consequences of the type of quasi-stability described by Definition \ref{quasidef} above for dynamical systems $(S_t,H)$ satisfying Condition \ref{secondorder}.
\cite[Proposition 7.9.4]{springer}
\begin{theorem}[Asymptotic Smoothness]\label{doy}
If a dynamical system $(S_t,H)$ satisfying Condition \ref{secondorder} is quasi-stable on every bounded, forward invariant set $ B \subset H$, then $(S_t,H)$ is asymptotically smooth. Thus, if in addition, $(S_t,H)$ is ultimately dissipative, then by Theorem \ref{dissmooth} there exists a compact global attractor $ A \subset H$. 
\end{theorem}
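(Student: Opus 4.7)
My plan is to verify the sequential form of asymptotic smoothness: for any bounded forward invariant $B \subset H$, any sequence $\{y_n\} \subset B$, and any $t_n \to \infty$, I will produce a convergent subsequence of $\{S_{t_n} y_n\}$ in $H$. This is equivalent, via the Ceron--Lopes criterion, to $\alpha(S_t B) \to 0$ where $\alpha$ denotes the Kuratowski measure of noncompactness, and delivers asymptotic smoothness in Hale's sense.

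Given $\epsilon > 0$, I first fix $T = T(\epsilon)$ large enough that $e^{-\gamma T}(\mathrm{diam}_H B)^2 < \epsilon$. For $n$ with $t_n \geq T$, forward invariance permits the factorization
\[
S_{t_n} y_n \;=\; S_T\bigl(S_{t_n - T} y_n\bigr) \;=:\; S_T w_n, \qquad w_n \in B,
\]
and the quasi-stability estimate \eqref{specquasi} applied to the pair $w_n, w_m \in B$ on $[0,T]$ yields
\[
\|S_T w_n - S_T w_m\|_H^2 \;\le\; e^{-\gamma T}\|w_n - w_m\|_H^2 + C_q \sup_{\tau \in [0,T]} \|x_n(\tau) - x_m(\tau)\|_{Z_*}^2,
\]
with $x_n(\tau)$ denoting the $X$-component of $S_\tau w_n$. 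The first term is bounded by $\epsilon$ by choice of $T$, so the whole task reduces to making the second term vanish along a subsequence.

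For the second term the plan is an Arzel\`a--Ascoli argument on the family $\{x_n\} \subset C([0,T]; Z_*)$. Forward invariance of $B$ together with its boundedness in $H = X \times Z$ bounds $x_n(\tau)$ uniformly in $X$ and---since $(x_n(\tau), \partial_\tau x_n(\tau)) = S_\tau w_n \in B$---bounds $\partial_\tau x_n(\tau)$ uniformly in $Z$. Compactness of the embedding $X \hookrightarrow Z_*$ delivers pointwise precompactness, while the uniform $Z$-bound on $\partial_\tau x_n$, composed with the continuous inclusion of $Z$ into $Z_*$ from Definition \ref{quasidef}, delivers Lipschitz-type equicontinuity in $\tau$. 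A subsequence therefore converges in $C([0,T]; Z_*)$, making the second term arbitrarily small. A standard diagonal extraction over $\epsilon_k \to 0$ produces a Cauchy---hence, by completeness of $H$, convergent---subsequence of $\{S_{t_n} y_n\}$.

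The main subtle point will be establishing equicontinuity in $Z_*$: it depends on extracting a uniform $Z$-bound for $\partial_\tau x_n$ directly from forward invariance of $B$, which is precisely what the second-order hyperbolic-type structure of Condition \ref{secondorder} (the requirement $x \in C^1(\mathbb{R}_+; Z)$) affords. With asymptotic smoothness in hand, the second sentence of the theorem---existence of a compact global attractor under ultimate dissipativity---follows as a direct application of the cited Theorem \ref{dissmooth}.
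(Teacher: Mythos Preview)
The paper does not supply its own proof of this theorem; it is quoted from \cite[Proposition 7.9.4]{springer} and stated without argument. Your approach---use the quasi-stability estimate to split $\|S_Tw_n-S_Tw_m\|_H^2$ into an exponentially small piece and a lower-order piece, then run an Arzel\`a--Ascoli argument on $\{x_n\}\subset C([0,T];Z_*)$ to kill the latter along a subsequence, and finish with a diagonal extraction---is essentially the standard proof found in that reference (equivalently, one checks the $\Psi$-criterion of Theorem~\ref{psi} with $\Psi=C_q\sup_\tau\|x_1-x_2\|_{Z_*}^2$). So your plan is correct and aligned with the cited source.

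One caution on the equicontinuity step. You justify it by invoking a continuous inclusion $Z\hookrightarrow Z_*$, read off the chain ``$Z\subseteq Z_*\subset X$'' in Definition~\ref{quasidef}. That chain is almost certainly a misprint: it contradicts Condition~\ref{secondorder}, where $X$ embeds compactly into $Z$, and in every concrete instance in the paper one has $X=H_0^2\hookrightarrow Z_*=H^{2-\eta}\hookrightarrow Z=L^2$, so $Z$ does \emph{not} embed into $Z_*$. The repair is routine: with $x_n$ bounded in $L^\infty(0,T;X)$ and $\partial_\tau x_n$ bounded in $L^\infty(0,T;Z)$, interpolation gives
\[
\|x_n(t)-x_n(s)\|_{Z_*}\le C\,\|x_n(t)-x_n(s)\|_X^{\theta}\,\|x_n(t)-x_n(s)\|_Z^{1-\theta}\le C\,|t-s|^{1-\theta},
\]
which is H\"older equicontinuity; alternatively one invokes the Aubin--Lions--Simon lemma for the triple $X\hookrightarrow Z_*\hookrightarrow Z$ directly. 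With that adjustment your argument is complete.
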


The theorems in \cite[Theorem 7.9.6 and 7.9.8]{springer} provide the following result concerning improved properties of the attractor $A$ if the quasi-stability estimate can be shown {\em on} $A$.
\begin{theorem}[Dimensionality and Smoothness]\label{dimsmooth}
If a dynamical system $(S_t,H)$ satisfying Condition \ref{secondorder} possesses a compact global attractor $ A \subset H$, and is quasi-stable on $A$, then $ A$ has finite fractal dimension in $H$, i.e., $\text{dim}_f^HA <+\infty$. Moreover, any full trajectory $\{(x(t),x_t(t))~:~t \in \mathbb R\} \subset A$ has the property that
$$x_t \in L^{\infty}(\mathbb R;X)\cap C(\mathbb R;Z);~~x_{tt} \in L^{\infty}(\mathbb R;Z),$$ with bound
$$||x_t(t)||^2_X+||x_{tt}(t)||_Z^2 \le C,$$
where the constant $C$ above depends on the ``compactness constant" $C_q$ in \eqref{specquasi}.
\end{theorem}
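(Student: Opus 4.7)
The plan is to exploit the quasi-stability estimate \eqref{specquasi} on $A$ in two essentially independent ways: first applied to distinct pairs in $A$ to extract finite fractal dimension via an abstract squeezing theorem, and second applied to time-translates of a single bi-infinite trajectory in $A$ to extract the stated regularity in time. Throughout I use that $A$, being a compact global attractor, is bounded and forward invariant in $H$ and consists of bi-infinite full trajectories.

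For the dimensionality claim, I would fix $t^* > 0$ with $e^{-\gamma t^*} \le 1/4$, so that for any $y_1, y_2 \in A$ the estimate \eqref{specquasi} rewrites as
\begin{equation*}
||S_{t^*}y_1 - S_{t^*}y_2||_H \le \tfrac{1}{2}\,||y_1 - y_2||_H + \sqrt{C_q}\,\mu(y_1, y_2),
\end{equation*}
with $\mu(y_1, y_2) = \sup_{\tau \in [0, t^*]} ||x_1(\tau) - x_2(\tau)||_{Z_*}$. Because $A$ is compact in $H$, Lipschitz continuity of the flow sends $A$ into a bounded subset of $C([0, t^*]; X)$, and the compact embedding $X \hookrightarrow Z_*$ together with Arzel\`a--Ascoli renders this image precompact in $C([0, t^*]; Z_*)$. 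Thus $\mu$ is a compact seminorm on $A - A$, and $S_{t^*}|_A$ fits the hypotheses of Ladyzhenskaya's squeezing theorem as formulated in \cite{springer}, which yields $\dim_f^H A < \infty$.

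For the regularity of full trajectories, pick $\{U(t) = (x(t), x_t(t))\}_{t \in \mathbb R} \subset A$ and, for small $h > 0$, apply \eqref{specquasi} to $y_1 = U(s)$, $y_2 = U(s+h)$ over the time span $T - s$ (with $s < T$), obtaining
\begin{equation*}
||U(T + h) - U(T)||_H^2 \le e^{-\gamma(T - s)}\,||U(s + h) - U(s)||_H^2 + C_q \sup_{\tau \in [s, T]} ||x(\tau + h) - x(\tau)||_{Z_*}^2.
\end{equation*}
Boundedness of $A$ in $H$ allows one to send $s \to -\infty$ to kill the first term. Dividing by $h^2$ then bounds $h^{-2}||U(T+h) - U(T)||_H^2$ by $C_q \sup_{\tau \in \mathbb R} h^{-2}||x(\tau + h) - x(\tau)||_{Z_*}^2$, and the right-hand side is uniformly bounded in $h$ because $x_t$ is bounded in $Z$ along $A$ and $Z \hookrightarrow Z_*$ (so that $h^{-1}||x(\tau + h) - x(\tau)||_{Z_*} \le C$ by the fundamental theorem of calculus). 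Reading off the two coordinates on the left-hand side as $h \to 0^+$, using weak-$\star$ compactness in $X$ for the first and Lipschitz-in-time-in-$Z$ for the second, furnishes $x_t(T) \in X$ and $x_{tt}(T) \in Z$ with bounds depending on $C_q$ that are uniform in $T$; hence $x_t \in L^\infty(\mathbb R; X)$ and $x_{tt} \in L^\infty(\mathbb R; Z)$. The continuity $x_t \in C(\mathbb R; Z)$ is inherited directly from $U \in C(\mathbb R; H)$.

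The main obstacle I expect is the uniform-in-$h$ control of the compact-seminorm term in the second step: before dividing by $h$, one only knows a priori that $||U(s+h) - U(s)||_H$ is uniformly bounded by the diameter of $A$, so the $s \to -\infty$ limit is essential to remove the contractive term. A brief bootstrap is also needed, namely that the second coordinate $x_t$ of any point of $A$ is automatically bounded in $Z$ (since $A \subset H = X \times Z$ is bounded), so the first coordinate $x$ is Lipschitz in time with values in $Z$ and therefore in $Z_*$. Once these observations are in place the passage to the limit $h \to 0^+$ is routine, and the dependence of the final constant on $C_q$ is transparent from the final estimate.
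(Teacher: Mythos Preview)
Your proposal is essentially the standard argument and matches what the paper points to: the paper does not prove this theorem itself but cites \cite{quasi,springer} and notes that the proof ``rel[ies] fundamentally on the technique of `short' trajectories or `$l$' trajectories.'' Your regularity step (applying \eqref{specquasi} to time-shifts of a full trajectory, sending $s\to -\infty$, dividing by $h^2$, then passing to the limit via weak-$\star$ compactness) is exactly the method in those references. For the dimension step your ingredients are right, but the claim that ``$\mu$ is a compact seminorm on $A-A$'' is not quite well-posed: $\mu(y_1,y_2)=\sup_{[0,t^*]}\|x_1-x_2\|_{Z_*}$ depends on the evolved trajectories, not on $y_1-y_2$ alone, so it is not of the form $n(y_1-y_2)$ for a seminorm $n$ on $H$. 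The $l$-trajectory device the paper alludes to is precisely what repairs this: one lifts $A$ to $\widetilde A=\{\,\tau\mapsto S_\tau y:y\in A\,\}\subset C([0,t^*];H)$, where the shift by $t^*$ satisfies a squeezing inequality with the genuine compact seminorm $\|x(\cdot)\|_{C([0,t^*];Z_*)}$ (compactness following from the Arzel\`a--Ascoli argument you sketched), Ladyzhenskaya's theorem gives $\dim_f\widetilde A<\infty$, and Lipschitz evaluation at $\tau=0$ pushes this down to $A\subset H$. With that adjustment your outline is complete and coincides with the cited proof.
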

\noindent Elliptic regularity can then be applied to the equation itself generating the dynamics $(S_t,H)$ to recover regularity for $x(t)$ in a norm higher than the state space $X$.
\begin{remark}
If quasi-stability is defined as in Remark \ref{genquas}, more assumptions may be needed to infer the additional smoothness from the estimate in \eqref{genquaseq}.
\end{remark}

The following theorem relates generalized fractal exponential attractors to the quasi-stability estimate \cite[p. 388, Theorem 7.9.9]{springer}
\begin{theorem}\label{expattract*}
Let Condition \ref{secondorder} be in force. Assume that the dynamical system generated by solutions $(S_t,H)$ is ultimately dissipative and quasi-stable on a bounded absorbing set $ B$. We also assume there exists a space $\widetilde H \supset H$ so that $t \mapsto S_ty$ is H\"{o}lder continuous in $\widetilde H$ for every $y \in  B$; this is to say there exists $0<\alpha \le 1$ and $C_{ B,T>0}$ so that 
\begin{equation}\label{holder}||S_ty-S_sy||_{\widetilde H} \le C_{ B,T}|t-s|^{\alpha}, ~~t,s\in[0,T],~~y \in  B.\end{equation} Then the dynamical system $(S_t,H)$ possesses a generalized fractal exponential attractor $A_{\text{exp}}$ whose dimension is finite in the space $\widetilde H$, i.e., $\text{dim}_f^{\widetilde H} A_\text{exp}<+\infty$. 
\end{theorem}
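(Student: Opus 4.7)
The plan is to apply the classical construction of a discrete-time exponential attractor for the iteration $V := S_{t^*}$ via a squeezing property extracted from \eqref{specquasi}, then lift the resulting object to continuous time using the H\"older hypothesis \eqref{holder}.

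\emph{Discrete squeezing.} By ultimate dissipativity I may replace $B$ with a bounded, forward-invariant absorbing set (still denoted $B$). Fix $t^*>0$ large enough that $e^{-\gamma t^*}\le 1/4$ in \eqref{specquasi}. Taking square roots and applying the elementary inequality $\sqrt{a^2+b^2}\le a+b$, the quasi-stability estimate yields the squeezing-type bound
\begin{equation*}
||Vy_1-Vy_2||_H \,\le\, \tfrac{1}{2}||y_1-y_2||_H+\sqrt{C_q}\,\mathfrak{n}(y_1,y_2),\qquad y_1,y_2\in B,
\end{equation*}
with $\mathfrak{n}(y_1,y_2):=\sup_{\tau\in[0,t^*]}||x_1(\tau)-x_2(\tau)||_{Z_*}$. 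The key point is that $\mathfrak{n}$ is a compact seminorm on $B\subset H$: orbits starting in $B$ are uniformly bounded in $C([0,t^*];X)$ and equicontinuous in $Z$ by Condition \ref{secondorder}, so an Arzel\`a--Ascoli argument in $C([0,t^*];Z_*)$ shows that $\mathfrak n$ is dominated by a genuinely compact seminorm on $H$.

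\emph{Discrete attractor and continuous-time lift.} With the squeezing decomposition in hand, the abstract discrete-time exponential attractor construction (see \cite[Ch.\ 7]{springer}) produces a compact, forward-invariant set $A_d\subset B$ with $VA_d\subset A_d$, exponential attraction of $B$ under iteration of $V$ in the $H$-topology, and $\text{dim}_f^{\widetilde H}A_d<\infty$. I then set
\begin{equation*}
A_{\exp}\,:=\,\bigcup_{t\in[0,t^*]}S_t A_d.
\end{equation*}
This set is compact in $H$ (as a continuous image of $[0,t^*]\times A_d$) and forward invariant under $\{S_t\}$ by $VA_d\subset A_d$. Continuous-time exponential attraction of $B$ in $H$ follows by writing $t=nt^*+r$ with $r\in[0,t^*)$ and combining the exponential decay of $\text{dist}_H(V^ny,A_d)$ with the local Lipschitz estimate \eqref{dynsys} on $S_r$. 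Finally, \eqref{holder} together with \eqref{dynsys} implies the map $(t,y)\mapsto S_ty$ from $[0,t^*]\times A_d$ to $\widetilde H$ is H\"older in $t$ with exponent $\alpha$ and Lipschitz in $y$, so the standard bound on the fractal dimension of the image of a H\"older map gives $\text{dim}_f^{\widetilde H}A_{\exp}\le \text{dim}_f^{\widetilde H}A_d+\alpha^{-1}<\infty$.

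\emph{Main obstacle.} The heart of the proof is the discrete exponential attractor construction from the squeezing decomposition: this is the covering argument showing $V(B)$ has finite Kolmogorov $\varepsilon$-entropy in $H$ (using compactness of $\mathfrak n$) together with the iterative construction of finite-dimensional $\varepsilon$-nets for $V^n$. The restriction to finite dimension in the weaker space $\widetilde H$ rather than $H$ is forced by the continuous-time lift, since $t\mapsto S_t y$ is only H\"older in the weaker topology. Once $A_d$ is in place, the continuous-time extension and the H\"older-based dimension bound are routine bookkeeping.
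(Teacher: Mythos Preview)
Your proposal is correct and matches the approach the paper cites (the paper does not itself prove this theorem, deferring to \cite{springer,quasi} with the remark that the argument rests on the method of short/$l$-trajectories). The skeleton you give---extract a discrete squeezing inequality from \eqref{specquasi}, invoke the abstract discrete exponential-attractor construction, then lift to continuous time via the H\"older hypothesis \eqref{holder}---is exactly the standard proof in those references, and the $l$-trajectory idea appears implicitly in your trajectory-based seminorm $\mathfrak n$.

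Two small technical points worth tightening. First, equicontinuity in $Z$ does not directly give equicontinuity in the intermediate space $Z_*$ (in the applications here $Z_*=H^{2-\eta}$ sits strictly between $X=H_0^2$ and $Z=L^2$), so your Arzel\`a--Ascoli step for compactness of $\mathfrak n$ is really an Aubin--Lions/Simon argument, or equivalently an interpolation between the uniform $X$-bound and the $Z$-equicontinuity; this is routine but should be named. Second, your appeal to \eqref{dynsys} for the local Lipschitz property of $S_r$ in the continuous-time lift is borrowed from the concrete plate system (Proposition~\ref{p:well}) rather than the abstract hypotheses of the theorem; in the cited references this Lipschitz dependence on bounded sets is indeed part of the standing assumptions on the dynamical system, so the step is legitimate, but in a self-contained abstract proof you should state it as a hypothesis rather than cite the plate-specific estimate.
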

\begin{remark}
Remark 7.9.10 \cite[pg. 389]{springer} discusses the need for the H\"{o}lder continuity assumption above. It is presently an open question as to how ``necessary" this condition is for general hyperbolic systems possessing global compact attractors. \end{remark}
\begin{remark}
In addition, owing to the abstract construction of the set $A_{\text{exp}} \subset X$, {\em boundedness} of $A_{\text{exp}}$ in any higher topology is not addressed by Theorem \ref{expattract*}. \end{remark}

The proofs of Theorems \ref{dimsmooth} and \ref{expattract*} can be found in \cite{quasi,springer}, and rely fundamentally on the technique of ``short" trajectories or ``l" trajectories (see, e.g.,  \cite{ltraj2}).

The above two theorems appeal to the quasi-stability property of the dynamics on the global attractor $A$ or the absorbing set $B$. If one can construct a compact set $K$ which is itself exponentially attracting, then having the quasi-stability estimate on $K$ (along with the transitivity of exponential attraction Theorem \ref{trans}) yields a stronger result. This result is given as \cite[Theorem 3.4.8, p. 133]{quasi} and proved there.

\begin{theorem}[\cite{quasi}]\label{exp2}
Let $(S_t,H)$ be a dynamical system, where $H$ is a separable Banach space. Assume:
\begin{enumerate}
\item There exists a positively invariant compact set $F \subset H$ and positive constants $C$ and $\gamma$ such that $$\sup_{t}\left\{d_{H}(S_tx,F)~:~x \in D \right\} \le C e^{-\gamma(t-t_D)},$$ for every bounded set $D \subset H$ and for $t \ge t_D$.
\item There exists a neighborhood $\mathscr O$ of $F$ and numbers $\Delta_1$ and $\alpha_1$ such that 
$$||S_tx_1-S_tx_2|| \le \Delta_1e^{\alpha_1 t}||x_1-x_2||.$$
\item The system $(S_t, H)$ is quasi-stable on $F$ for $t \ge t_*$ for some $t_*>0$. 
\item The mapping $t \mapsto S_tx$ is uniformly H\"{o}lder continuous on $F$; that is there exists constants $C_F(T)>0$ and $\eta \in (0,1]$ such that 
$$||S_{t_1}x-S_{t_2}x|| \le C_F(T)|t_1-t_2|^{\eta},~~t_i \in [0,T],~~x \in F.$$
\end{enumerate}
Then there exists a fractal exponential attractor $A_{\text{exp}} \subset H$ for $(S_t,H)$ whose fractal dimension is finite in $H$.
\end{theorem}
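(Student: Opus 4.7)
The overall plan is to first construct an exponential attractor $A_{\text{exp}} \subset F$ for the restricted dynamical system $(S_t|_F, F)$, exploiting quasi-stability on $F$ (hypothesis 3) and H\"{o}lder continuity in time (hypothesis 4), and then to promote the exponential attraction property from $F$ to all bounded subsets of $H$ by combining the global attraction of $F$ from (1), the local Lipschitz estimate (2), and the transitivity principle for exponential attraction (Theorem \ref{trans}). Since $F$ is positively invariant and compact, restricting the semigroup to $F$ gives a well-defined dynamical system on a compact metric space, which is the natural setting in which to do the abstract exponential-attractor construction.

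First I would work inside $F$. Using hypothesis (3), the quasi-stability estimate \eqref{specquasi} yields the squeezing-type property from which a Ladyzhenskaya / $\ell$-trajectories argument (as in \cite[Chapter 7]{springer}) produces finite $H$-fractal dimension of $F$ and simultaneously a finite set of ``short'' trajectories whose discrete hulls are compact and absorbing for the time-$T$ map $S_T|_F$. The H\"{o}lder continuity condition (4) is then exactly what is needed to pass from the discrete exponential attractor (for the iterates of $S_T|_F$) to a continuous one $A_{\text{exp}} \subset F$, by forming $\bigcup_{t\in[0,T]} S_t A_{\text{exp}}^{\text{disc}}$ and controlling its dimension via H\"{o}lder modulus. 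The outcome is a compact set $A_{\text{exp}} \subset F$, with $\mathrm{dim}_f^H A_{\text{exp}} < +\infty$, which is exponentially attracting \emph{for trajectories starting in $F$}:
\begin{equation*}
\sup\{d_H(S_t x, A_{\text{exp}}) : x \in F\} \le C' e^{-\gamma' t}, \quad t \ge 0.
\end{equation*}

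The second phase is the transitivity argument. From hypothesis (1), bounded subsets $D \subset H$ are exponentially attracted to $F$; from phase one, $F$ is exponentially attracted to $A_{\text{exp}}$; and hypothesis (2) supplies the Lipschitz-in-initial-data estimate on a neighborhood $\mathscr O$ of $F$ that is required to ``splice'' these two attractions. Applying Theorem \ref{trans} in this neighborhood (after waiting long enough that the forward orbit of $D$ lies inside $\mathscr O$), one obtains constants $C'', \gamma'' > 0$ such that
\begin{equation*}
\sup\{d_H(S_t x, A_{\text{exp}}) : x \in D\} \le C'' e^{-\gamma''(t - t_D')},
\end{equation*}
for all sufficiently large $t$, which upgrades $A_{\text{exp}}$ from an attractor for $(S_t|_F,F)$ to a fractal exponential attractor for $(S_t,H)$. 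Combined with $\mathrm{dim}_f^H A_{\text{exp}} < +\infty$ from phase one, this delivers the conclusion.

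The main obstacle, in my estimation, is the bookkeeping in phase one: quasi-stability as in Definition \ref{quasidef} is stated in a product space $H = X \times Z$ and controls the difference of trajectories by an exponentially small term plus a lower-order seminorm, so obtaining an exponential attractor whose dimension is finite \emph{in $H$} itself (and not just in a weaker space $\widetilde H$, as in Theorem \ref{expattract*}) requires careful exploitation of the fact that we are working on a compact invariant set $F$. On $F$ the compactness of the lower-order seminorm and the uniform H\"{o}lder regularity in time conspire to give a uniform discrete squeezing in the $H$-norm, which is what ultimately upgrades dimensionality from $\widetilde H$ to $H$; the transitivity step is then comparatively routine once the local Lipschitz estimate (2) is in hand.
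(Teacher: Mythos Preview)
Your proposal is correct and follows essentially the same route the paper sketches: apply the exponential-attractor construction (Theorem \ref{expattract*}) on the compact, positively invariant set $F$---where the H\"{o}lder continuity hypothesis (4) holds in the $H$-norm itself, so $\widetilde H = H$ and the resulting attractor has finite fractal dimension in $H$---and then invoke the transitivity of exponential attraction (Theorem \ref{trans}) together with the Lipschitz estimate (2) to upgrade attraction from $F$ to arbitrary bounded sets. Your identification of the ``main obstacle'' (why working on $F$ rather than merely on an absorbing ball upgrades the dimension from $\widetilde H$ to $H$) is exactly the point.
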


The proof of Theorem \ref{exp2} (as given in \cite{quasi}) proceeds, essentially, as a synthesis of our Theorem \ref{expattract*} and Theorem \ref{trans}. The results of Theorem \ref{expattract*} are applied on the absorbing ball, as well as on the smooth, invariant set $F$. Then transitivity of exponential attraction is invoked for arbitrary bounded subsets of the state space.

\section{The Difference of Trajectories}

In this section we recall a fundamental multiplier inequality, as well as key decompositions of nonlinear terms, to be used in the analysis later. 
We will reference the following difference system, where $u^i$ satisfy \eqref{plate} and $z=u^1-u^2$ and $\mathcal F(z) = f(u^1)-f(u^2)$:
\begin{equation}\label{difference}
\begin{cases}
z_{tt}+\Delta^2z+kz_t+\cF(z) = Lz \\
z=\partial_{\nu} z = 0 \\
z(0)=u^1_0-u^2_0;~~z_t(0)=u^1_1-u^2_1.
\end{cases}
\end{equation}
In reference to \eqref{difference} we will utilize the standard energy on the difference:
\begin{equation}\label{Ez}
E_z(t)=\dfrac{1}{2}\big[||z_t||^2+||\Delta z||^2\big].
\end{equation}
We note the following identities (arrived at first on strong solutions, and then via limit passage on generalized and weak solutions) corresponding to \eqref{difference}. The first is the energy identity, and the second is arrived at via the equipartition multiplier:
\begin{align}\label{stufff}
E_z(t) + k\int_s^t||z_t||^2 =&~-\int_s^t\big(\cF(z),z_t\big)_{\Omega} +\int_s^t\big(Lz,z_t\big)_{\Omega}\\[.2cm]
\int_s^t ||\Delta z||^2 - \int_s^t ||z_t||^2 =&~ \dfrac{k}{2}||z||^2\Big|_s^t+\int_s^t\big(Lz,z\big)_{\Omega} -\int_s^t\big(\cF(z),z\big)_{\Omega}
\end{align}
 
 The following lemma can be seen as a special case of \cite[Lemma 8.3.1, p.398]{springer}. It is a standard estimate (at least for fully-supported interior damping) utilizing \eqref{stufff} above with $k>0$, and it makes use of the locally-Lipschitz property of both $f_V$ and $f_B$ from $H_0^2(\Omega) \to L^2(\Omega)$ (for the latter, see, for instance, \cite[Section 3.5.1]{supersonic}). 

\begin{lemma}\label{le:observbl} 
Let $u^i \in C(0,T;H_0^2(\Omega))\cap C^1(0,T;L^2(\Omega)) $ solve (\ref{plate}) with clamped boundary conditions and appropriate initial conditions on $[0,T]$ for $i=1,2$, $T\ge T^*$. Additionally, assume $u^i(t) \in \mathscr B_R(H^2(\Omega))$ for all $t\in [0,T]$. Then the following estimate holds for $f_B$ or $f_V$ with $a_i$ independent of $T$ and $R$:
\begin{multline}\label{enest1}
  T\Ez(T)+ \int_{0}^T \Ez(\tau) d\tau \le  a_0\Ez(0)
  +C(T,R)\sup_{\tau \in [0,T]}||z||^2_{2-\eta} \\-a_1\int_0^T\int_s^T \big( \cF(z),z_t\big)_{\Omega} d\tau ds  -a_2\int_0^T \big( \cF(z),z_t\big)_{\Omega} ds.
\end{multline}
\end{lemma}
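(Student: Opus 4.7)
The plan is to combine the two identities in \eqref{stufff} via complementary multipliers, then absorb the non-dissipative term $Lz$ using the compact embedding built into \eqref{el}, while carrying the nonlinear pairings $(\cF(z),z_t)$ through to the right-hand side untouched---these are left for separate estimation using the special structure of the Berger/von Karman brackets.

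First I would apply the energy identity on the interval $[s,T]$ and integrate the resulting equation in $s$ over $[0,T]$ to produce a relation of the form
\[
T\Ez(T) = \int_0^T \Ez(s)\,ds - k\int_0^T\!\!\int_s^T \|z_t\|^2\,d\tau\,ds - \int_0^T\!\!\int_s^T (\cF(z),z_t)\,d\tau\,ds + \int_0^T\!\!\int_s^T (Lz,z_t)\,d\tau\,ds.
\]
Adding $\int_0^T \Ez$ to both sides, the task reduces to bounding $2\int_0^T \Ez(\tau)\,d\tau$ in terms of $\Ez(0)$, the nonlinear pairings, and a low-order piece. For that I would invoke the equipartition identity from \eqref{stufff} to trade $\int_0^T \|\Delta z\|^2$ for $\int_0^T \|z_t\|^2$ modulo boundary terms of the form $\|z(t)\|^2$ and the pairings $(Lz,z)$ and $(\cF(z),z)$. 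The remaining quantity $\int_0^T \|z_t\|^2$ is controlled by applying the energy identity on $[0,T]$: dropping the positive damping contribution on the left yields $k\int_0^T \|z_t\|^2 \le \Ez(0) + \int_0^T (Lz,z_t) - \int_0^T (\cF(z),z_t)$, which injects into the previous step. Telescoping and collecting terms produces the structure displayed on the right of \eqref{enest1}, with constants $a_i$ independent of $T$ and $R$ since the only $T$- and $R$-dependent contributions are routed into the single prefactor $C(T,R)$ multiplying $\sup \|z\|^2_{2-\eta}$.

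The main obstacle is absorbing the $Lz$ integrals. Since $L$ loses nearly a full derivative (mapping $H^{2-\sigma}\to L^2$) while $z_t\in L^2$ and $z \in H_0^2$, Cauchy--Schwarz bounds $|(Lz,z_t)|\lesssim \|z\|_{2-\sigma}\|z_t\|$ and $|(Lz,z)|\lesssim \|z\|_{2-\sigma}\|\Delta z\|$. A Young split with small parameter $\epsilon$ peels each pairing into an $\epsilon\|z_t\|^2+\epsilon\|\Delta z\|^2$ piece---absorbed either by $\int_0^T \Ez$ on the left or into the damping integral via $k>0$---plus a low-order residue $C_\epsilon\|z\|^2_{2-\sigma}$, which lands inside the allowed $C(T,R)\sup_\tau \|z\|^2_{2-\eta}$ term upon setting $\eta = \sigma$ and pulling the supremum out of the double time integrals at the cost of a $T^2$ factor hidden in $C(T,R)$. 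The double integral $-a_1\int_0^T\!\!\int_s^T (\cF(z),z_t)\,d\tau\,ds$ and the single integral $-a_2\int_0^T (\cF(z),z_t)\,ds$ appear precisely as inherited from the two applications of the energy identity, and are deliberately not estimated here because their sharper analysis---exploiting cancellation in the bracket/Berger structure---is the content of the quasi-stability argument to follow.
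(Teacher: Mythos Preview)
Your approach matches the paper's: combine the energy identity and the equipartition identity from \eqref{stufff}, route the $Lz$ pairings into the lower-order bucket via \eqref{el} and Young's inequality, and carry the $(\cF(z),z_t)$ integrals through unestimated. The paper does not spell out the details either, simply citing this as a standard multiplier computation.

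One point you leave implicit deserves a sentence: the equipartition identity produces an integral of $(\cF(z),z)$, which must disappear into the $C(T,R)\sup_\tau\|z\|_{2-\eta}^2$ term since the final estimate contains only the $(\cF(z),z_t)$ pairings. This is exactly where the locally-Lipschitz bound \eqref{f-est-lip} enters---the paper explicitly flags it as an ingredient. With $\delta>0$ one has $|(\cF(z),z)|\le \|\cF(z)\|_{-\delta}\|z\|_{\delta}\le C(R)\|z\|_{2-\delta}\|z\|_{\delta}$, which is purely lower order; alternatively, with $\delta=0$ one gets $|(\cF(z),z)|\le C(R)\|z\|_2\|z\|_0\le \epsilon\|\Delta z\|^2+C_\epsilon(R)\|z\|_0^2$ and the $\epsilon$-piece is absorbed on the left. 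Either way the term is harmless, but you should say so rather than leave it listed among the ``pairings'' without resolution.
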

We will utilize (in several places) key decompositions of the term $(\cF(z),z_t)_{\Omega}$ for both $f_B$ and $f_V$. Analysis of these terms differs, owing to the structural differences of the nonlinearities. Perhaps unsurprisingly (as $f_B$ is a simplification of $f_V$), the decomposition of $(\cF(z),z_t)$ is ``friendlier" in the case of the Berger plate---and we exploit this. 
The results stated in the following theorem can be found in \cite[Section 1.4]{springer} for the von Karman dynamics; see \cite{bucci1,bucci2,Memoires,gw} for the Berger dynamics.
\begin{theorem}\label{nonest}
Let $u^i \in \mathscr{B}_R(H^2_0(\Omega))$, $i=1,2$, and $z=u^1-u^2$ with $\cF(z)=f(u^1)-f(u^2)$.

 Then  for either $f=f_V$ or $f=f_B$ we have:
\begin{equation}\label{f-est-lip}
||f(u^1)-f(u^2)||_{-\delta}  \le C_{\delta}\Big(||u^1||_2,||u^2||_2\Big)||z||_{2-\delta} \le C(\delta,R)||z||_{2-\delta},~~\forall~\delta \in [0,1].
\end{equation}
\begin{remark}
When $\delta > 0$ the above property has been known \cite{ciarlet,lagnese}. However, when $\delta =0$ , in the case of the von Karman nonlinearity $f=f_V$, the above inequality follows from ``sharp" regularity of Airy's stress function \cite[p. 44]{springer}.
\end{remark}

\noindent {\bf Von Karman}: If we further assume that $u^i \in C(s,t;H^2(\Omega))\cap C^1(s,t;L^2(\Omega))$,  then taking \newline $\ds f(u)=f_V(u) = - [u,v(u)+F_0]$ ~we have:
\begin{equation}\label{4.9a}
- \big( \cF(z),z_{t}\big)_{\Omega} =\dfrac{1}{4}\frac{d}{dt}Q_0(z)+\frac{1}{2}
P_0(z)
\end{equation}
where
\begin{equation}
Q_0(z)=\big( v(u^1)+v(u^2),[z,z]\big)_{\Omega} -||\Delta v(u^1+u^2,z)||^2
\end{equation}
and
\begin{equation}\label{4.9aa}
P_0(z)=-\big( u^1_{t},[u^1,v(z)]\big)_{\Omega} -\big( u^2_{t},[u^2,v(z)]\big)_{\Omega} -\big(
u^1_{t}+u^2_{t},[z,v(u^1+u^2,z)]\big)_{\Omega}.
\end{equation}
Moreover,
\begin{align}\label{eq4.5}
\left|\int_s^t \big( \cF(z),z_t\big)_{\Omega} d\tau\right| \le &~C(R)\sup_{\tau \in [s,t]} ||z||^2_{2-\eta}+\frac C 2 \left|\int_s^tP_0(z)d\tau\right|
\end{align} for some $0<\eta<1/2$, provided
 $u^i(\tau) \in \mathscr{B}_R(H^2_0(\Omega))$ for all $\tau\in [s,t].$
 \vskip.2cm
\noindent {\bf Berger}: For $u^1,u^2 \in
C(s,t;(H^2\cap H_0^1)(\Omega))\cap C^1(s,t;L^2(\Omega))$, then taking \newline$\ds f(u)=f_B(u) = (b-||\nabla u||^2)\Delta u$~ we have:
\begin{equation*}
- \big( \cF(z),z_{t}\big)_{\Omega} =\dfrac{1}{2}\frac{d}{dt}Q_1(z)+P_1(z)
\end{equation*}
where
\begin{equation*}
Q_1(z)=\big( (b-||\nabla u^1||^2)||\nabla z||^2\big)
\end{equation*}
and
\begin{equation}\label{p1}
P_1(z)=- (\Delta u^1,u^2_t)||\nabla z||^2+\big(||\nabla u^1||^2-||\nabla u^2||^2\big)(\Delta u^2,z_t).
\end{equation}
We have then that, for  $0<\eta<1/2$:
\begin{align}\label{mmmbop}
\Big|\int_s^t \big( \cF(z),z_t\big)_{\Omega} d\tau\Big| \le &~C(R,\epsilon)\sup_{\tau \in [s,t]} ||z||^2_{2-\eta}+ \epsilon\int_s^tE(t)d\tau,~~\forall~~\epsilon>0,
\end{align} provided
 $u^i(\tau) \in \mathscr{B}_R(H^2_0(\Omega))$ for all $\tau\in [s,t].$
\end{theorem}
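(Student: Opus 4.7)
The theorem collects three complementary estimates: the Lipschitz bound \eqref{f-est-lip}, the von Karman decomposition \eqref{4.9a}--\eqref{eq4.5}, and the Berger decomposition through \eqref{mmmbop}. The plan is to prove each by algebraic manipulation of the specific nonlinearity, combined with known mapping properties of the von Karman bracket $[\cdot,\cdot]$ and the Airy solver $v(\cdot,\cdot)$.

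For \eqref{f-est-lip} in the von Karman case, using bilinearity of $v(\cdot,\cdot)$ to split $v(u^1)-v(u^2)=v(u^1+u^2,z)$, I would first write
\begin{equation*}
f_V(u^1)-f_V(u^2) = -[z,v(u^1)+F_0] - [u^2, v(u^1+u^2,z)].
\end{equation*}
Boundedness of $[\cdot,\cdot]\colon H^{2-\delta}\times H^{2}\to H^{-\delta}$ together with the ``sharp'' regularity $v(\cdot,\cdot)\colon H^2\times H^2\to W^{2,\infty}$ (required precisely at $\delta=0$) then finishes the bound. In the Berger case, decompose
\begin{equation*}
f_B(u^1)-f_B(u^2)=(b-\|\nabla u^1\|^2)\Delta z -(\nabla u^1+\nabla u^2,\nabla z)\,\Delta u^2;
\end{equation*}
the second factor is a scalar of size $O(R\,\|z\|_1)$, so both pieces are controlled in $H^{-\delta}$ by $C(R)\,\|z\|_{2-\delta}$.

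For the von Karman identity \eqref{4.9a}, I would pair the expansion above with $z_t$. The cyclic identity $([u,w],\phi)_\Omega=([u,\phi],w)_\Omega$, valid for clamped functions, recasts $([u^2,v(u^1+u^2,z)],z_t)_\Omega$ into a form where the time derivative $\frac{d}{dt}v(u^1+u^2,z)$ surfaces; invoking the Airy identity $\Delta^2 v(a,b)=-[a,b]$ then converts this into $\frac{d}{dt}\|\Delta v(u^1+u^2,z)\|^2$, modulo lower-order residuals containing $u^i_t$ that collect into $P_0$. The $(v(u^1)+v(u^2),[z,z])_\Omega$ piece of $Q_0$ emerges from the $[z,v(u^1)]$ term by the same cyclic identity. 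The integrated bound \eqref{eq4.5} then follows by integrating \eqref{4.9a} and using sharp Airy regularity to control the endpoint values of $Q_0$ by $C(R)\|z\|_{2-\eta}^2$; the loss $\eta>0$ accommodates the subcritical Sobolev embedding used to dominate the cross-product $\|\Delta v(u^1+u^2,z)\|^2$.

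For the Berger case, differentiating $Q_1(z)=(b-\|\nabla u^1\|^2)\|\nabla z\|^2$ by the product rule, combined with integration by parts and the identity $\|\nabla u^1\|^2-\|\nabla u^2\|^2=(\nabla u^1+\nabla u^2,\nabla z)$, produces $-(\cF(z),z_t)_\Omega$ on one side and the residuals $P_1$ (carrying $u^i_t$) on the other. For \eqref{mmmbop}, integrating gives $Q_1(z)\le C(R)\|z\|_{2-\eta}^2$ directly, while the $P_1$ contributions are handled by Cauchy--Schwarz with $\epsilon$-absorption, so that terms like $|(\Delta u^1,u^2_t)|\,\|\nabla z\|^2$ contribute $\epsilon\int\|u^2_t\|^2+C(R,\epsilon)\int\|z\|_{2-\eta}^4$, yielding the stated bound. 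The main obstacle is the von Karman algebra: correctly orchestrating the cyclic bracket identity and Airy-solver symmetry so that a single clean time derivative surfaces with all remaining pieces gathered into the form of $P_0$ in \eqref{4.9aa}. The sharp Airy regularity is the technical linchpin that makes $Q_0$ subcritical, and hence delivers the compact-seminorm bound in \eqref{eq4.5}.
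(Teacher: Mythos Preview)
Your approach is correct and aligns with the paper's treatment. The paper does not prove Theorem \ref{nonest} in-line but defers to \cite[Section 1.4]{springer} and \cite{bucci1,bucci2,Memoires,gw}; it does, however, reproduce the Berger decomposition explicitly as Lemma \ref{decomp} in Section 6, and your product-rule derivation matches it. Your identification of the cyclic bracket identity and sharp Airy regularity as the engines of the von Karman algebra is exactly the mechanism in \cite{springer}.

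One small misplacement in your handling of \eqref{mmmbop}: in the paper's explicit derivation (leading to \eqref{usethis} after Lemma \ref{decomp}), one works on a bounded forward-invariant set where $\|u^i_t\|\le C(R)$, so the first term of $P_1$, namely $(\Delta u^1,u^i_t)\,\|\nabla z\|^2$, is bounded outright by $C(R)\|z\|_{2-\eta}^2$ with no $\epsilon$ needed. The $\epsilon$-absorption is required instead for the \emph{second} term $\big(\|\nabla u^1\|^2-\|\nabla u^2\|^2\big)(\Delta u^2,z_t)$, because of the energy-level factor $z_t$: writing $\big|\,\|\nabla u^1\|^2-\|\nabla u^2\|^2\,\big|\le C(R)\|z\|_1$ and applying Young gives $\epsilon\|z_t\|^2+C(R,\epsilon)\|z\|_{2-\eta}^2$, with $\|z_t\|^2$ absorbed into $E_z$. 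Your route still closes (the $\|z\|_{2-\eta}^4$ you obtain collapses to $C(R)\|z\|_{2-\eta}^2$ by boundedness of $z$), but the $\epsilon$ in the paper's version lands on $E_z$ rather than on the individual velocity $\|u^2_t\|^2$.
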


\section{ Ultimate Dissipativity of the Dynamical System $(S_t,Y)$}\label{dissp}
 Here the structure of the calculations permit us to consider $f_B$ and $f_V$ simultaneously in showing ultimate dissipativity of $(S_t,Y)$. In the sections which follow, we will need to split the analyses to focus on specific properties of $f_B$ and $f_V$.
\begin{theorem}\label{disp} For $f=f_V$ or $f_B$ and any $k>0$, the dynamical system $(S_t,Y)$ is ultimately dissipative. 
\end{theorem}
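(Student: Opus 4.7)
The plan is to construct a Lyapunov-type functional $V(t)$ which is equivalent, modulo an additive constant, to the positive energy $E^*(t)$, and to establish a differential inequality $\dot V + \gamma V \le C$ from which ultimate dissipativity follows by Gronwall's lemma. These estimates are performed on strong solutions; the conclusion extends to generalized solutions by density and the well-posedness estimate \eqref{dynsys}. I would take
$$V(t) = \mathcal E(t) + \beta (u_t(t), u(t))_\Omega + \frac{\beta k}{2}\|u(t)\|^2$$
for a small parameter $\beta>0$ to be fixed. Since the added correction is subcritical with respect to $E^*$, Proposition \ref{potentiallowerbound} together with Young's inequality gives $c_1 E^*(t) - M_1 \le V(t) \le C_1 E^*(t) + M_1$ for $\beta$ sufficiently small.

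Differentiating $V$: the energy identity \eqref{energyrelation} yields $\tfrac{d}{dt}\mathcal E = -k\|u_t\|^2 + (Lu, u_t)_\Omega$. Pairing \eqref{plate} with $\beta u$ and integrating by parts (using the clamped boundary conditions) gives
$$\beta\frac{d}{dt}(u_t, u)_\Omega = \beta\bigl[\|u_t\|^2 - \|\Delta u\|^2 - k(u_t,u)_\Omega - (f(u), u)_\Omega + (p, u)_\Omega + (Lu, u)_\Omega \bigr],$$
and the cross-term $-\beta k(u_t,u)_\Omega$ is cancelled by the derivative of $\tfrac{\beta k}{2}\|u\|^2$. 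The key coercivity comes from the algebraic identities
$$(f_V(u), u)_\Omega = \|\Delta v(u)\|^2 - (F_0, [u,u])_\Omega, \qquad (f_B(u), u)_\Omega = \|\nabla u\|^4 - b\|\nabla u\|^2,$$
the von Karman identity following from the Airy biharmonic equation \eqref{airy} and the symmetry of the bracket; after absorbing the subcritical lower-order pieces $(F_0,[u,u])_\Omega$ and $b\|\nabla u\|^2$ via Proposition \ref{potentiallowerbound}, one obtains $\|\Delta u\|^2 + (f(u),u)_\Omega \ge c_0 E^*(t) - M_0$ for both nonlinearities.

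Combining these ingredients yields
$$\dot V \le -(k-\beta)\|u_t\|^2 - \beta c_0 E^*(t) + (Lu, u_t)_\Omega + \beta(Lu, u)_\Omega + \beta (p, u)_\Omega + \beta M_0.$$
Each non-gradient/source cross term is absorbed using the mapping property \eqref{el} and Young's inequality: $|(Lu, u_t)_\Omega| \le \tfrac{k-\beta}{2}\|u_t\|^2 + C\|u\|^2_{2-\sigma}$, and $|(Lu, u)_\Omega| + |(p,u)_\Omega| \le C(\|u\|_{2-\sigma}^2 + 1)$. Applying Proposition \ref{potentiallowerbound} to bound $\|u\|_{2-\sigma}^2 \le \varepsilon E^*(t) + C_\varepsilon$, and choosing $\varepsilon$ sufficiently small relative to $\beta c_0$, produces
$$\dot V(t) + \gamma E^*(t) \le M_2.$$
Invoking the lower equivalence $E^* \ge c_1^{-1}(V + M_1)$ gives $\dot V + \gamma' V \le M_3$, and Gronwall delivers an absorbing ball in $Y$, which is precisely ultimate dissipativity of $(S_t, Y)$.

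The main technical obstacle is that, unlike the purely gradient case $L\equiv 0$, there is no time-integrated dissipation to spare, so the non-dissipative term $(Lu, u_t)_\Omega$ must be absorbed pointwise in time by the damping $k\|u_t\|^2$. The choice of $\beta$ must simultaneously respect the equivalence $V \sim E^*$ and the constraint $k - \beta > 0$. Crucially, because $L$ is a subcritical perturbation (mapping $H^{2-\sigma}$ continuously into $L^2$ with $\sigma>0$), the residue $\|u\|_{2-\sigma}^2$ produced by Young's inequality is reabsorbable into $E^*$ up to additive constants via Proposition \ref{potentiallowerbound}. This is why the argument succeeds for every $k>0$ with no lower bound on the damping, and handles $f_B$ and $f_V$ in a unified manner.
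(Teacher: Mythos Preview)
Your proposal is correct and follows essentially the same approach as the paper: the Lyapunov functional $V=\mathcal E+\beta(u_t,u)+\tfrac{\beta k}{2}\|u\|^2$, the equivalence $V\sim E^*$ via Proposition~\ref{potentiallowerbound}, the algebraic identities for $(f(u),u)_\Omega$, and the absorption of the non-dissipative term $(Lu,u_t)_\Omega$ via Young's inequality and the subcriticality of $L$ all match the paper's argument line for line (with $\beta$ playing the role of the paper's parameter $\nu$). The resulting differential inequality $\dot V+\gamma' V\le M$ and Gronwall conclusion are identical.
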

\begin{proof}[Proof of Theorem \ref{disp}]
Let $k>0$. We consider the following parametric Lyapunov function:
\begin{align}
V(S_t y) \equiv &~\cE(u(t),u_t(t))+\nu\Big(( u_t,u) +\frac{k}{2}||u||^2\Big)
\end{align}
where $S_t y \equiv y(t)= (u(t),u_t(t))$ for $t \ge 0$, and $\nu$ is some small positive number to be determined below.
From Proposition \ref{potentiallowerbound} we have
\begin{equation}\label{energybounds}
c_0E^*(u,u_t) - c \le V(S_ty) \le c_1E^*(u,u_t) +c
\end{equation}
for all $0<\nu<\nu_0$ and $\nu_0$ sufficiently small. Here, $c_0,c_1,c >0$ are constant. The terms $c_0$ and $c$ may depend on $\nu_0$, but do not depend on the damping parameter $k$. 

We now compute $\ds \dfrac{d}{dt} V(S_ty)$:
\begin{align}
\dfrac{d}{dt} V(S_ty)=&\nonumber~\dfrac{d}{dt}\cE(t)+\nu(u_{tt}+ku_t,u)_{\Omega} +\nu||u_t||^2 \\\nonumber
&
\end{align}
We make use of the relation $$\ds u_{tt}+ku_t=-\Delta^2u+p-f(u)+Lu$$ owing to (\ref{plate}). Substituting this into the relation above and simplifying yields:
\begin{align*}
\dfrac{d}{dt} V(S_ty)
=&~ (\nu-k)||u_t||^2-\nu ||\Delta u||^2-\nu(f(u),u) \\
&+\nu(p,u)+(Lu,u_t)+\nu (Lu,u).
\end{align*}
We note that: \begin{align*}
(f_V(u),u)_{\Omega} = & \big(-[u,v(u)+F_0],u\big)_{\Omega} = ||\Delta v(u)||^2-([u,u],F_0) \\[.2cm]
(f_B(u),u)_{\Omega} = & \big((b-||\nabla u||^2)\Delta u, u \big)_{\Omega} = ||\nabla u||^4-b ||\nabla u||^2.
\end{align*}
Additionally, by the assumption \eqref{el} on the ``lower order" operator $L$, we have 
\begin{equation}\label{epsilonlemma}
||Lu||^2 \le ||u||^2_{2-\sigma} \le \epsilon E^*+C_{\epsilon}.\end{equation}
Then, using  (i) Young's inequality, (ii) the bound in Proposition~\ref{potentiallowerbound}, (iii) and by taking $\nu$ sufficiently small, we have lemma that follows.
\begin{remark} We note the role that nonlinear forces play in proving ultimate dissipativity here. Indeed, Lemma \ref{epsilonlemma} is due to superlinear behavior of the Berger and von Karman nonlinearities. 
\end{remark}
 \begin{lemma}\label{le:48}
For \textit{any} $k>0$ there exist $\nu>0$, and $c(\nu,k)>0$, $C_1(\nu,p,F_0),$ and $C_2(\nu,p,b)>0$, such that
\begin{equation}\label{goodneg}
\dfrac{d}{dt}V(S_ty)\le-c\Big\{||u_t||^2+||\Delta u||^2+||\Delta v(u)||^2 \Big\}+C_1,
\end{equation} for $f=f_V$. 

For $f=f_B$ we have
\begin{equation}\label{goodneg1}
\dfrac{d}{dt}V(S_ty)\le-c\Big\{||u_t||^2+||\Delta u||^2+||\nabla u||^4 \Big\}+C_2.
\end{equation}
We have that $c(k) \to +\infty$ as $k \to +\infty$.
\end{lemma}

From this lemma and the upper bound in \eqref{energybounds}, we have for some $\delta(\nu,k)>0$ (with $\delta \to +\infty$ as $k\to +\infty$)  and a $C$ (independent of $k$): \begin{equation}\label{gronish}
\dfrac{d}{dt}V(S_ty) +\delta V(S_ty) \le C,~~t>0.
\end{equation}
The estimate above in (\ref{gronish}) implies (by a version of Gronwall's inequality) that
\begin{equation*}
V(S_ty) \le V(y)e^{-\delta t}+\dfrac{C}{\delta}(1-e^{-\delta t}).
\end{equation*}
Hence, the set
$$
\mathcal{B}_{\delta} \equiv \left\{y \in Y:~V(y) \le 1+\dfrac{C}{\delta} \right\},
$$  is a bounded forward invariant absorbing set. This gives that $(S_t,Y)$ is ultimately dissipative.
\end{proof}
\begin{remark} {\rm
It is clear that if the damping coefficient $k$ is increased, the size of the absorbing set $\mathcal B_{\delta}$ does not increase. We can choose a fixed absorbing set $\mathcal B$, established by some value $0<k_*<k$. It is clear that increasing the damping coefficient will decrease the time of absorption for a fixed absorbing set $\mathcal B$.}
\end{remark}

\section{Analysis of the von Karman Plate: $f=f_V$}
In this section we establish the existence of a compact global attractor for $(S_t,Y)$ in the case of $f=f_V$. We make use of Theorem \ref{dissmooth}, since the dynamics here are non-gradient (due to the presence of the piston-theoretic presence of $L$). We provide the argument below for completeness, though the existence of the attractor is established in \cite{Memoires,springer}. 
\subsection{Existence of a Compact Global Attractor}
Since the dynamical system $(S_t,Y)$ is ultimately dissipative by Theorem \ref{disp}, we now show the {\em asymptotic smoothness} property of the dynamical system in \eqref{plate} taken with $f_V$. We make use of the criterion for asymptotic smoothness given in \cite{springer} (which first appeared in another form in \cite{kh}); it is given in the Appendix as Theorem \ref{psi}. The estimate below follows from \eqref{enest1} by taking $T$ sufficiently large.
\begin{lemma}\label{le:khan}
Let $f=f_V$ in \eqref{plate} and take $k>0$. Suppose $z=u^1-u^2$ is as in (\ref{difference}), with $y^i(t)=(u^i(t),u^i_t(t))$ and $y^i(t) \in \mathscr B_R(Y)$ for all $t\ge 0$. Also, let $\eta >0$ and $\Ez(t)$ be defined as  in (\ref{Ez}).
Then for every $0<\e<1$ there exists  $T=T_\e(R)$ such that the following estimate holds:
\begin{equation*}
E_z(T)  \le \epsilon + \Psi_{\epsilon,T,R}(y^1,y^2),
\end{equation*}
\begin{multline*} \Psi_{\epsilon,T,R}(y^1,y^2) \equiv C(R,T) \sup_{\tau \in [0,T]} ||z(\tau)||_{2-\eta}^2 +a_1\left| \int_0^T\big( \cF(z),z_t\big)_{\Omega} d\tau\right| \\+ a_2\left|\int_0^T \int_s^T \big( \cF(z(\tau)),z_t(\tau)\big)_{\Omega} d\tau ds\right|.\end{multline*}
\end{lemma}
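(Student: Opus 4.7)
The plan is to read the statement as a direct quantitative corollary of the multiplier inequality in Lemma \ref{le:observbl}, with the role of $T$ being to dampen the contribution of the initial energy. The nonlinear terms are simply carried across into the compensator $\Psi_{\epsilon,T,R}$, since Theorem \ref{nonest}, decomposition \eqref{eq4.5}, is the tool used downstream (in the verification of Theorem \ref{psi}) to show that these are actually asymptotically compact sequences of terms. So at the level of Lemma \ref{le:khan} there is no need to unpack them.

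First, I would apply Lemma \ref{le:observbl} to the difference $z = u^1 - u^2$ of two trajectories lying in the ball $\mathscr B_R(Y)$. This produces
\begin{equation*}
T\,E_z(T) + \int_0^T E_z(\tau)\,d\tau \le a_0\, E_z(0) + C(T,R)\sup_{\tau\in[0,T]}\|z(\tau)\|^2_{2-\eta} + a_1\Big|\!\int_0^T\!\!\int_s^T(\cF(z),z_t)_\Omega\,d\tau\,ds\Big| + a_2\Big|\!\int_0^T(\cF(z),z_t)_\Omega\,d\tau\Big|,
\end{equation*}
where the $a_i$ constants do not depend on $T$ or $R$. Since $y^1(0),y^2(0)\in\mathscr B_R(Y)$, one has the trivial bound $E_z(0)\le C_0 R^2$ for a universal constant $C_0$. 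Dropping the nonnegative term $\int_0^T E_z(\tau)\,d\tau$ on the left-hand side and dividing through by $T$ yields
\begin{equation*}
E_z(T) \le \frac{a_0 C_0 R^2}{T} + \frac{C(T,R)}{T}\sup_{\tau\in[0,T]}\|z(\tau)\|^2_{2-\eta} + \frac{a_1}{T}\Big|\!\int_0^T\!\!\int_s^T(\cF(z),z_t)_\Omega\,d\tau\,ds\Big| + \frac{a_2}{T}\Big|\!\int_0^T(\cF(z),z_t)_\Omega\,d\tau\Big|.
\end{equation*}

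Given $\epsilon\in(0,1)$, I now choose $T = T_\epsilon(R)$ large enough that $a_0 C_0 R^2/T \le \epsilon$; the remaining coefficients $C(T,R)/T$, $a_1/T$, $a_2/T$ are then absorbed (after renaming) into the constants $C(R,T)$, $a_1$, $a_2$ appearing in the target expression $\Psi_{\epsilon,T,R}$. This produces exactly the claimed bound $E_z(T)\le \epsilon + \Psi_{\epsilon,T,R}(y^1,y^2)$.

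The argument is genuinely short because all of the delicate work has already been done: the multiplier inequality in Lemma \ref{le:observbl} is the engine, and the invariance hypothesis $y^i(t)\in\mathscr B_R(Y)$ is what converts $E_z(0)$ into a number that can be shrunk by taking $T$ large. The only potential obstacle is making sure that no hidden dependence on $T$ lurks in the leading constant $a_0$ in front of $E_z(0)$, since otherwise one cannot force $a_0 C_0 R^2/T$ below $\epsilon$; this is guaranteed by the statement of Lemma \ref{le:observbl}, which explicitly records that $a_0, a_1, a_2$ are $T$-independent. The nonlinear terms are passed through untouched, their compactness-type structure being exploited later when Lemma \ref{le:khan} is fed into the asymptotic smoothness criterion of Theorem \ref{psi}.
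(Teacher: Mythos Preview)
Your proposal is correct and is essentially the paper's own argument: the paper simply states that the estimate ``follows from \eqref{enest1} by taking $T$ sufficiently large,'' which is exactly what you do---drop the nonnegative integral on the left, bound $E_z(0)$ by $C_0R^2$ using $y^i(0)\in\mathscr B_R(Y)$, divide by $T$, and choose $T$ large. Your observation that the $T$-independence of $a_0$ is what makes this work is the only real content, and it matches the paper's reasoning.
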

In Lemma \ref{le:khan} above, we have the necessary estimate for asymptotic smoothness; it now suffices to show that $\Psi$, as defined above, has the desired compensated compactness condition.
Before proceeding, let us introduce some notation which will be used throughout the remainder of this section and in the following section. We will write \begin{equation}\label{notations} l.o.t. = \sup_{\tau \in [0,T]}||z(\tau)||^2_{2-\eta}.\end{equation}

\begin{theorem}\label{smoothness}
The dynamical system $(S_t,Y)$ generated by weak solutions to (\ref{plate}) is asymptotically smooth.
\end{theorem}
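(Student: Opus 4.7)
The plan is to apply the asymptotic smoothness criterion (Theorem \ref{psi} from the Appendix) together with the estimate already produced in Lemma \ref{le:khan}. Since $(S_t,Y)$ is already ultimately dissipative by Theorem \ref{disp}, it suffices to verify the compensated-compactness condition for the functional
\[
\Psi_{\epsilon,T,R}(y^1,y^2) = C(R,T)\sup_{\tau\in[0,T]}\|z(\tau)\|_{2-\eta}^2 + a_1\Bigl|\int_0^T(\mathcal F(z),z_t)_\Omega\,d\tau\Bigr| + a_2\Bigl|\int_0^T\!\!\int_s^T(\mathcal F(z(\tau)),z_t(\tau))_\Omega\,d\tau\,ds\Bigr|.
\]
Concretely, given an arbitrary sequence $\{y_n\}\subset\mathscr B_R(Y)$, I will extract a subsequence (still denoted by $y_n$) so that the corresponding pairwise $\Psi$ satisfies $\liminf_{m}\liminf_{n}\Psi_{\epsilon,T,R}(y_n,y_m)=0$.

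The first step is to handle the $l.o.t.$ term defined in \eqref{notations}. On the interval $[0,T]$ the trajectories $u_n$ are bounded in $C(0,T;H^2_0(\Omega))$ with $\partial_t u_n$ bounded in $C(0,T;L^2(\Omega))$, so a standard Aubin--Lions/Arzel\`a--Ascoli argument on the compact embedding $H^2_0(\Omega)\Subset H^{2-\eta}(\Omega)$ (for any $\eta\in(0,1/2)$) yields a subsequence converging in $C(0,T;H^{2-\eta}(\Omega))$. Consequently $\sup_{\tau\in[0,T]}\|u_n(\tau)-u_m(\tau)\|_{2-\eta}\to 0$ along this subsequence, killing the first contribution to $\Psi$.

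The second step, which is where the real work lives, is to process the integrals of $(\mathcal F(z),z_t)_\Omega$ using the von Karman decomposition \eqref{4.9a}--\eqref{4.9aa} from Theorem \ref{nonest}. The exact derivative term $\tfrac{1}{4}\frac{d}{dt}Q_0(z)$ integrates to boundary values $Q_0(z(T))-Q_0(z(s))$, each of which is controlled by $\|z\|^2_{2-\eta}$ after invoking the sharp Airy regularity $\|\Delta v(u,w)\|\lesssim \|u\|_{2-\eta}\|w\|_{2-\eta}$ (cited in the Remark following \eqref{f-est-lip}). For the $P_0(z)$ pieces of the form $(u^i_t,[u^j,v(z)])_\Omega$ and $(u^1_t+u^2_t,[z,v(u^1+u^2,z)])_\Omega$, the decisive observation---and the main obstacle---is that although $u^i_t$ only lives in $L^2$, the bracket against which it is paired gains compactness through the Airy solver: $v(z)$ and $v(u^1+u^2,z)$ are quadratic/bilinear in arguments that converge strongly in $H^{2-\eta}$, so that $[u^j,v(z)]$ and $[z,v(u^1+u^2,z)]$ converge strongly in $H^{-1}$ (or a comparable dual space in which the brackets with $H^2$ functions embed). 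Combined with the estimate \eqref{eq4.5} and the convergence $z_n-z_m\to 0$ in $C(0,T;H^{2-\eta})$, this forces $\int_0^T P_0(z_{n,m})\,d\tau\to 0$ along the subsequence, and likewise for the iterated integral after swapping the order of integration.

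Putting these pieces together, for any $\epsilon>0$ and $T=T_\epsilon(R)$ as in Lemma \ref{le:khan}, one obtains $\limsup_{n,m\to\infty}\Psi_{\epsilon,T,R}(y_n,y_m)=0$, hence $\limsup_{n,m\to\infty}E_{z_{n,m}}(T)\le\epsilon$. Since $\epsilon$ is arbitrary, the hypothesis of Theorem \ref{psi} is verified and $(S_t,Y)$ is asymptotically smooth. The main technical obstacle, as flagged above, is the rigorous extraction of compactness from the trilinear terms $(u^i_t,[u^j,v(\cdot)])_\Omega$; this is precisely the step for which the ``sharp'' Airy-function regularity from \cite{springer} is indispensable, and it is what distinguishes the non-rotational ($\alpha=0$) case treated here from the easier rotational case.
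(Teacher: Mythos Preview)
Your overall strategy is right---apply Theorem~\ref{psi} via Lemma~\ref{le:khan} and show the iterated liminf of $\Psi$ vanishes---and your treatment of the $l.o.t.$ term via Aubin--Lions is fine. The gap is in the second step, where you invoke the $Q_0/P_0$ decomposition \eqref{4.9a}--\eqref{4.9aa} and claim that the brackets $[u^j,v(z)]$, $[z,v(u^1+u^2,z)]$ converge ``strongly in $H^{-1}$ (or a comparable dual space)'' and that this forces $\int_0^T P_0(z_{n,m})\,d\tau\to 0$. The problem is that these brackets are paired against $u^i_t$, which lies only in $L^2(\Omega)$; convergence of the brackets in $H^{-1}$ or $H^{-2}$ is useless unless you can place the velocities in $H^1_0$ or $H^2_0$. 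On a general bounded forward-invariant set you have no such regularity for $u^i_t$, and the sharp Airy estimates only give $[u^j,v(z)]\in L^2$ with a bound proportional to $\|z\|_2^2$ (which does not tend to zero). This is precisely why, in Section~\ref{refone}, the paper uses the $P_0$ decomposition only \emph{on the attractor}, where compactness of $\{u^i_t(\tau)\}$ in $L^2$ permits an $\epsilon$-net approximation of the velocities by $H^2_0$ elements.

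For asymptotic smoothness the paper instead uses a \emph{different} representation (see \cite[pp.~598--599]{springer}):
\[
(\mathcal F(z^{n,m}),z^{n,m}_t)_\Omega=\tfrac14\tfrac{d}{d\tau}\bigl\{-\|\Delta v(u^n)\|^2-\|\Delta v(u^m)\|^2+2([z^{n,m},z^{n,m}],F_0)_\Omega\bigr\}-([v(u^m),u^m],u^n_t)_\Omega-([v(u^n),u^n],u^m_t)_\Omega.
\]
The point is that the non-exact-derivative pieces are \emph{cross terms} in which the index of the velocity is decoupled from that of the bracket. One then takes the iterated limit: for fixed $n$, let $m\to\infty$ using weak convergence $u^m_t\rightharpoonup u_t$ in $L^2((0,T)\times\Omega)$ against the fixed test function $[v(u^n),u^n]\in L^2$, together with compactness of $\Delta v(\cdot):H^2\to L^2$ for the other term; then let $n\to\infty$. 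The resulting limit $2\int_s^T([v(u),u],u_t)\,d\tau$ exactly cancels the contribution $\tfrac12\{\|\Delta v(u(s))\|^2-\|\Delta v(u(T))\|^2\}$ from the time-derivative part, giving zero. Your $P_0$ terms do not separate the indices in this way, so no such weak-convergence argument is available, and the proposal as written does not close.
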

\begin{proof}[Proof of Theorem \ref{smoothness}]
Let $B$ be a bounded, positively invariant set in $Y$, and let $\{y^n\}\subset B \subset \mathscr{B}_R(Y)$.   We now write $\Psi_{\epsilon,T,R}$ as $\Psi$, with $\epsilon, T,$ and $R$ fixed along with the other constants given by the equation. We would like to show that
$$
\liminf_m \liminf_n \Psi(y^n,y^m) = 0.
$$
More specifically, for any initial data $y_0^n=(u^n_0,u^n_1) \in B $ we define
\begin{equation}
\widetilde{ \Psi} (y^n_0, y^m_0) = \left|\int_0^T \big( \mathcal{F}(z^{n,m}),
z^{n,m}_t \big)_{\Omega} d\tau\right| +\left |\int_0^T \int_s^T \big( \mathcal{F}(z^{n,m}(\tau)) ,z^{n,m}_t(\tau) \big)_{\Omega} d\tau ds \right|
\end{equation}
where  $(z^{n,m},z^{n,m}_t) = (u^n -u^m, u^n_t-u^m_t)$ has initial data $y^n_0-y^m_0$ and solves (\ref{difference}). The key to
compensated compactness is the following representation for the bracket \cite[pp. 598-599]{springer}, which differs from the representation given earlier in Theorem \ref{nonest}:
\begin{align*}
\big( \mathcal{F}(z^{n,m}),z^{n,m}_t\big)_{\Omega} =& ~\frac{1}{4} \frac{d}{d\tau} \Big\{ - ||\Delta v(u^n) ||^2
- ||\Delta v(u^m) ||^2 + 2 \big( [z^{n,m},z^{n,m}],F_0\big)_{\Omega} \Big\}\\\nonumber&-\big( [ v(u^m),u^m], u^n_t\big)_{\Omega} -
\big( [ v(u^n) , u^n ], u^m_t\big)_{\Omega}.
\end{align*}
Integrating the above expression in time and evaluating, with $u^{i} \rightharpoonup u$, yields:
\begin{multline}\label{itlim}
\lim_{n\to \infty}\lim_{m\to \infty} \int_s^T \big( \mathcal{F}(z^{n,m}(\tau))
,z_t^{n,m}(\tau) \big)_{\Omega} d\tau \\ 
=\dfrac{1}{2} \Big\{ ||\Delta v(u(s)) ||^2 - ||\Delta v(u(T))||^2 \Big\} \hspace*{1.5in} \\
- \lim_{n \to \infty} \lim_{m \to \infty} \int_s^T \Big\{ \big( [ v(u^n),
u^n ], u_t^m\big)_{\Omega} + \big( [ v(u^m), u^m ], u_t^n\big)_{\Omega}\Big\},
\end{multline}
where we have used (i) the weak convergence in $H^2(\Omega)$ of $z^{n,m} $
to 0, and (ii) compactness of $\Delta v(\cdot) $ from $H^2(\Omega) \rightarrow
L^2(\Omega)$ \cite[pp. 43--44]{springer}. The iterated limit in (\ref{itlim}) is handled via iterated weak
convergence, as follows:
\begin{multline*}
\lim_{n \to \infty} \lim_{m \to \infty} \int_s^T \Big\{ \big( [ v(u^n), u^n
], u_t^m\big)_{\Omega} + \big( [ v(u^m), u^m ], u_t^n\big)_{\Omega} \Big\}
\\
= 2 \int_s^T \big( [ v(u), u] , u_t\big)_{\Omega} = \frac{1}{2} ||\Delta v(u)(s) ||^2 - \frac{1}{2} || \Delta v(u)(T) ||^2.
\end{multline*}
 This yields the desired conclusion, that
\begin{equation*}
\lim_{n\rightarrow \infty }\lim_{m\rightarrow \infty }\int_{s}^{T}\big(\mathcal{F}( z^{n,m}(\tau)),z_{t}^{n,m}(\tau)\big)_{\Omega} d\tau=0.
\end{equation*}
The second integral term in $\widetilde{ \Psi }$ is handled similarly. Finally, since the term $l.o.t.$ above is
compact (below energy level) via the Sobolev embeddings,
we obtain
\begin{equation*}
\underset{m\rightarrow \infty }{\lim \inf }\, \underset{n\rightarrow \infty }{\lim \inf }~{ \Psi }(y_0^{n},y_0^{m})=0.
\end{equation*}
Invoking Theorem \ref{psi}  concludes the proof of the asymptotic smoothness of $(S_t,Y)$.
\end{proof}

Having shown the asymptotic smoothness property, we can now conclude by Theorem \ref{dissmooth} that there exists a compact global attractor $\mathcal A \subset Y$ for the dynamical system $(S_t, Y)$ when $f=f_V$ in \eqref{plate}.

\subsection{Quasi-stability on $\mathcal A$}\label{refone}
In order to establish both smoothness of the attractor and finite fractal
dimensionality, a stronger estimate on the difference of two trajectories on the attractor is needed. 

In this section we refine our methods in the asymptotic smoothness calculation and work on trajectories from the attractor, whose existence has been established previously. We will take a different approach than in \cite{springer} when addressing the von Karman dynamics.
\begin{lemma}\label{est*}
Let $f=f_V$ and take $k>0$. Suppose $z=u^1-u^2$ is as in (\ref{difference}), with $y^i(t)=(u^i(t),u_t(t)^i)$ and $y^i(t) \in \mathcal A$ for all $t\ge 0$. Also, let $\eta >0$ and $\Ez(t)$ be defined as  in (\ref{Ez}).
Then there exists a time $T$ such that the following estimate holds:
\begin{equation*}
E_{z}(T)+\int_{0}^{T}E_z(\tau) d\tau\leq \alpha E_z(0)+C(\mathcal{A},T,k)\underset{\tau \in \lbrack0,T]}{\sup }||z(\tau )||_{2-\eta }^2,\hskip.5cm \alpha<1.
\end{equation*}
\end{lemma}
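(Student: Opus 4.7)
The plan is to upgrade the asymptotic-smoothness inequality of Lemma \ref{le:khan} to a genuine contraction estimate, with $\alpha<1$, by restricting to full trajectories lying on the compact global attractor $\mathcal A$. Because $L\not\equiv 0$ breaks the gradient structure, the classical backward-in-time smallness of velocities used in \cite{springer} is unavailable; following \cite{glw,delay}, I instead exploit the uniform $Y$-boundedness of $u^i, u^i_t$ on $\mathcal A$ together with the algebraic structure of the von Karman bracket and sharp Airy regularity.

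First, I apply the reconstruction estimate Lemma \ref{le:observbl} to $y^1,y^2\in\mathcal A$, which yields
\begin{equation*}
T E_z(T) + \int_0^T E_z(\tau)\,d\tau \le a_0 E_z(0) + C(T,\mathcal A)\,l.o.t. + a_1\mathcal I_1 + a_2\mathcal I_2,
\end{equation*}
with $\mathcal I_1,\mathcal I_2$ the iterated and single time-integrals of $(\cF(z),z_t)$. Next, I invoke the von Karman decomposition \eqref{4.9a}--\eqref{4.9aa} to split each such integral into boundary terms of $Q_0(z)$ and a time integral of $P_0(z)$. The $Q_0$-boundary terms are trilinear in $(u^i,u^i,z)$ through $v(u^1+u^2,z)$ and $[z,z]$; using the sharp Airy regularity $v:H^2_0\times H^2_0\to W^{2,\infty}$ and Young's inequality, they are dominated by $\delta(E_z(T)+E_z(0)) + C(\mathcal A,\delta)\,l.o.t.$ for any $\delta>0$.

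The crux of the proof is the treatment of $\int_s^T P_0(z)\,d\tau$, which in the gradient case is handled by backward smallness of $u^i_\tau$ in $L^2_{\mathrm{loc}}$. Here I plan to integrate by parts in time, exploiting bracket symmetry: schematically,
\begin{equation*}
(u^i_\tau,[u^i,v(z)])_\Omega = \tfrac{1}{2}\partial_\tau\bigl([u^i,u^i],v(z)\bigr)_\Omega - \tfrac{1}{2}\bigl([u^i,u^i], \partial_\tau v(z)\bigr)_\Omega,
\end{equation*}
with $\partial_\tau v(z) = 2v(z,z_\tau)$. The first piece contributes only boundary values at $\tau=0,T$, absorbed as in the $Q_0$ analysis; the residual has the form $([u^i,u^i],v(z,z_\tau))_\Omega$, which via sharp Airy regularity and Sobolev interpolation is bounded by $\epsilon\int_0^T E_z(\tau)\,d\tau + C(\mathcal A,\epsilon,T)\,l.o.t.$ for any $\epsilon>0$. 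The summand of $P_0$ containing $v(u^1+u^2,z)$, linear in $z$, is easier and is handled analogously.

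Collecting all estimates and absorbing small-coefficient energy contributions on the left, for $T$ sufficiently large and $\delta,\epsilon$ sufficiently small,
\begin{equation*}
(T-\delta C_1) E_z(T) + (1-\epsilon C_2)\int_0^T E_z(\tau)\,d\tau \le (a_0+\delta C_3)E_z(0) + C(\mathcal A,T,k,\delta,\epsilon)\,l.o.t.
\end{equation*}
Since $T\ge 1$ gives $T E_z(T)\ge E_z(T)$ and $(a_0+\delta C_3)/(T-\delta C_1)\to 0$ as $T\to\infty$, choosing $T$ large yields the claimed inequality with $\alpha<1$. The main obstacle is the $P_0$-control in Step 3; the integration-by-parts in time, enabled by bracket symmetry and the compactness of Airy's operator, is the mechanism that lets this proceed despite the non-gradient nature of $(S_t,Y)$. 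This is exactly the von Karman counterpart of the simpler Berger analysis, where the $P_1$-structure in \eqref{p1} delivers \eqref{mmmbop} much more directly.
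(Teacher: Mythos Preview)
Your Step 3 contains a genuine gap. After the integration by parts in time, the residual $([u^i,u^i],v(z,z_\tau))_\Omega$---which, via bracket symmetry and the Airy relation, equals $([v(u^i),z],z_\tau)_\Omega$---is still at critical regularity. On $\mathcal A$ you only know $u^i\in H^2_0$, hence $v(u^i)\in W^{2,\infty}$ but no better; this yields $\|[v(u^i),z]\|_0\le C(\mathcal A)\|z\|_2$, and pairing with $\|z_\tau\|_0$ gives $\big|([v(u^i),z],z_\tau)\big|\le C(\mathcal A)E_z$ with \emph{no} small parameter. No interpolation or further bracket shuffling recovers an $\epsilon$: you would need $v(u^i)\in H^{2+\eta}$ to drop to $\|z\|_{2-\eta}$, and $W^{2,\infty}\not\subset H^{2+\eta}$. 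Note also that your argument, as written, uses only uniform $Y$-boundedness and never invokes the \emph{compactness} of $\mathcal A$; if it worked, it would give the quasi-stability estimate on the absorbing ball for $f_V$ with arbitrary $k>0$, which the paper explicitly does not obtain (large $k$ is needed in Theorem~\ref{largek1}).

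The paper's route---the one actually in \cite{glw,delay}---is different in kind. It exploits compactness of $\{u^i_t(\tau):\tau\in\mathbb R\}\subset L^2(\Omega)$: for any $\epsilon>0$ one picks a finite $\epsilon$-net $\{\phi_j\}\subset H^2_0(\Omega)$ approximating the velocities uniformly in time, and replaces $P_0(z)$ by $P_{j_1,j_2}(z)$ where the $u^i_t$ are swapped for smooth $\phi_j$. The error $|P_0-P_{j_1,j_2}|\le \epsilon C(\mathcal A)\|z\|_2^2$ is where the small parameter enters, while $|P_{j_1,j_2}(z)|\le C(\epsilon)\|z\|_{2-\eta}^2$ follows from $\phi_j\in H^2_0$ and the subcritical bracket estimate $\|[u,v(z,w)]\|_{-2}\le C\|u\|_{2-\beta}\|z\|_{2-\beta_1}\|w\|_{1+\beta_1}$. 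The regularity gain thus comes from the net, not from algebraic manipulation; compactness of $\mathcal A$ is essential and cannot be replaced by boundedness alone.
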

The quasi-stability inequality has become a standard tool in proving both smoothness and finite dimensionality of attractors. 
The typical way of proving it  (in the case of ``non-compact" dynamics) is by utilizing the gradient property of the dynamics and running estimates for large negative times. This avenue is not possible in the non-gradient scenario, as the attractor is not characterized by trajectories connecting stationary points. In what follows we expound upon a recent method, which exploits the just-obtained compactness of the attractor, along with an associated $\epsilon$-net property.
\begin{proof}[Proof of Lemma \ref{est*}]
Analyzing \eqref{enest1}, we may also write
\begin{multline}\label{eq-obsr1}
TE_z(T)+\int_0^TE_z(\tau)d\tau \\ \le cE_z(0)+
C\cdot T \sup_{s\in [0,T]}\Big|\int_s^T( \cF(z),z_t)_{\Omega} d\tau \Big|+C(R,T)\sup_{\tau \in [s,t]} ||z||^2_{2-\eta},
\end{multline}
where $\cF(z)$ is given in \eqref{notations}.
We note that $c$ does not depend on $T$.

In order to prove the quasi-stability estimate (as in \eqref{specquasi}), we have to address the non-compact term $(
\mathcal{F}(z),z_{t})_{\Omega}$ using only the established properties of the global attractor: namely, {\em compactness} of $\mathcal A$ in $H_0^2(\Omega)\times L^2(\Omega)$. We recall the  relation \eqref{4.9aa}:
if  $u^i \in C(s,t;H^2(\Omega))\cap C^1(s,t;L^2(\Omega))$ with $u^i(\tau) \in \mathscr{B}_R(H_0^2(\Omega))$
for $\tau\in [s,t]$, then
\begin{align}\label{1new}
\left|
\int_s^t (\cF(z),z_t(\tau))_{\Omega} d\tau\right| \le &~C(R)\sup_{\tau \in [s,t]} ||z||^2_{2-\eta}+\frac C2\left|
 \int_s^t P_0(z(\tau))d\tau\right|
 \end{align} for some $0<\eta<1/2$.
Here $P_0(z)$ is given by (\ref{4.9aa}).
\par
Let $\gamma_{u^1} =\{(u^1(t),u^1_{t}(t)):t\in \mathbb{R\}}$ and $\gamma_{u^2} =\{(u^2(t),u^2_{t}(t)):t\in \mathbb{R\}}$ be trajectories from the
attractor $\mathcal{A}$. It is clear that for the pair $
u^1(t)$ and $u^2(t)$ satisfy the hypotheses of the estimate in \eqref{1new} for every
interval $[s,t]$. Our main goal is to handle the term on the right hand side of (\ref{1new}) involving $P_0$ which is of {\it critical
regularity}. Note that for every $\tau \in \mathbb{R}$, the element $u^i_{t}(\cdot)$ belongs to
a compact set in $L_{2}(\Omega )$. Hence, by density of $H_0^2(\Omega) $ in $L^2(\Omega) $
we can assume, without a loss of generality, that  for every $\epsilon >0$ there exists a finite set $
\{\phi _{j}\}\subset H_{0}^{2}(\Omega )$, $ j = 1,2,...,n(\epsilon) $, such that  for all $\tau \in \mathbb{R} $ we can find indices $
j_{1}(\tau) $ and $j_{2} (\tau) $ with
\begin{equation*}
||u^1_{t}(\tau )-\phi _{j_{1}(\tau)}||+||u^2_{t}(\tau)-\phi _{j_{2}(\tau)}||\leq \epsilon
~~\mbox{ for all } ~\tau \in \mathbb{R}.
\end{equation*}
With $P_0(z)$ given by (\ref{4.9aa}), denote
\begin{equation*}
P_{j_{1},j_{2}}(z)\equiv -\left( \phi _{j_{1}},[u^1,v(z)]\right)_{\Omega} -\left( \phi
_{j_{2}},[u^2,v(z,z)]\right)_{\Omega} -\left( \phi _{j_{1}}+\phi
_{j_{2}},[z,v(u^1+u^2,z)]\right)_{\Omega},
\end{equation*}
where $z(t)=u^1(t)-u^2(t).$ It is easily demonstrated that for all $j_1, j_2 \leq n(\epsilon)$:
\begin{equation}\label{3new}
\big|\big|P_0(z(\tau))-P_{j_{1}(\tau),j_{2}(\tau)}(z(\tau))\big|\big|\leq \epsilon C(\mathcal A)||z(\tau )||_{2}^{2}
\end{equation}
uniformly in $\tau \in  \mathbb{R} $.
\par
Appealing to the estimate  (1.4.17) in \cite[p. 41]{springer},
\begin{equation*}
\big|\big|[u,w]\big|\big|_{-2} \leq C ||u||_{2-\beta}~||w||_{1+\beta},~~  \forall \beta \in [0,1)
\end{equation*}
and exploiting elliptic regularity,
one obtains
\begin{equation}\label{kar2}
\big|\big|[u,v(z,w)]\big|\big|_{-2}  \leq C ||u||_{2-\beta}~\big|\big|[z,w]\big|\big|_{-2}  \leq C ||u||_{2-\beta}~ ||z||_{2-\beta_1}~ ||w||_{1+\beta_1 },
 \end{equation}
 where above inequality holds for any  $\beta, \beta_1 \in [0,1)$. Recalling the  additional smoothness of $\phi_j \in H_0^2(\Omega)$, along with the  estimate in (\ref{kar2})
 applied with $\beta = \beta_1 =\eta $,
 and accounting  the structure of the $P_j$ terms, one obtains immediately:
\begin{equation*}
\big|\big|P_{j_{1},j_{2}}(z)\big|\big|\leq C(\mathcal A)\Big(||\phi _{j_{1}}||_{2}+||\phi
_{j_{2}}||_{2}\Big) ||z(\tau )||_{2-\eta }^{2}
\end{equation*}
for some $0<\eta<1$. So we have
\begin{equation}\label{5.5}
\underset{j_{1},j_{2}}{\sup }||P_{j_{1},j_{2}}(z)||\leq
C(\epsilon)||z(\tau )||_{2-\eta }^{2}~~~\mbox{for some $0<\eta<1$},
\end{equation}
where $C(\epsilon) \rightarrow \infty $ when $\epsilon \rightarrow 0 $.
Taking into account (\ref{3new}) and (\ref{5.5}) in (\ref{1new}):
\begin{equation}\label{4.12}
\left|\int_{s}^{t}(\mathcal{F}
(z),z_{t})_{\Omega} \right|\leq C(\epsilon ,T,\mathcal A)\underset{\tau \in
\lbrack s,t]}{\sup }||z(\tau)||_{2-\eta }^{2}+\epsilon
\int_{s}^{t}||z(\tau )||_{2}^{2}d\tau
\end{equation}
for all $s\in \mathbb{R}$, with $\eta >0$ and $t>s$.
Considering \eqref{4.12}, and taking $T$ fixed but sufficiently large, we have
from \eqref{eq-obsr1} and \eqref{1new} that:
\begin{equation*}
E_{z}(T)\leq \alpha E_z(0)+C\underset{\tau \in \lbrack0,T]}{\sup }||z(\tau )||_{2-\eta }^2, \hskip.5cm \alpha<1.
\end{equation*}

\end{proof}
 A (by now standard) stabilization argument yields that, for a trajectory of differences $(z(t),z_t(t))$,
\begin{equation*}
||(z(t),z_t(t)||_{Y}^2 \le C(\sigma,\mathcal A)||(z_0,z_1)||_{Y}^2e^{-\sigma t}+C^* \sup_{\tau \in [0,t]} ||z(\tau)||_{2-\eta}^2.
\end{equation*}
This is the abstract estimate in Theorem \ref{specquasi}---the quasi-stability estimate. Hence we conclude that the dynamical system $(S_t,Y)$ is quasi-stable on $\mathcal A$. 
\begin{remark} It is important to note that the technique employed here provides a constant $C^*$ above which does not depend on a particular trajectory; thus, in what follows the corresponding estimates are uniform on the absorbing ball---giving boundedness of the attractor in the higher topology. This is a point of departure from the technique which utilizes of {\em backward-in-time smallness} (as in \cite{springer}) to obtain the quasi-stability estimate. \end{remark}

\subsection{Smoothness and Finite Dimensionality of $\mathcal A$---Completion of the Proof of Theorem \ref{th:main1}}\label{smuth}
Now, on the strength of Theorem \ref{dimsmooth}, we can
conclude from the quasi-stability estimate above that $\mathcal{A}$ has a finite fractal dimension.
\par
Additionally, Theorem \ref{dimsmooth} guarantees that $$||u_{tt}(t)||^2+||u_t(t)||_2^2 \le C(\mathcal A)~\text{ for all } t \in \R.$$
Since $u_{t}\in H^{2}(\Omega )$, and via the sharp regularity of the Airy stress function $$||f_V(u)|| \le C||u||_2^2+||u||_2||v(u)||_2 \le C\Big(||u||_2^2+||u||_2^3\Big) \le C(\mathcal A),$$ standard elliptic regularity theory for
$$
\Delta ^{2}u=-u_{tt}-k u_t-f_V(u)-Lu+p \in L^2(\Omega),
$$
with the clamped boundary conditions give that $$\ ||u(t)||_{4}^{2}\leq C(\mathcal A) ~~~\text{for all}~ ~
t\in \reals.$$ 
Thus, we can conclude  additional regularity of the trajectories from the attractor $$\mathcal A \subset W=(H^4\cap H_0^2)(\Omega) \times H_0^2(\Omega)$$ (with associated bound), as stated in Theorem \ref{th:main1}.

\subsection{Exponential Attractor with Large Damping}\label{strong}
In obtaining the quasi-stability estimate on the attractor $\mathcal A$ we critically utilized the compactness of the attractor. In order to use Theorem \ref{expattract*} (which yields a generalized fractal exponential attractor), we show the quasi-stability estimate on an absorbing set $\mathcal B \subset Y$, and verify a H\"{o}lder continuity property of the dynamics in a weaker topology. 

\begin{proof}[Proof of Theorem \ref{largek1}] On any bounded set $B$ (for instance on the absorbing set constructed in the proof of Theorem \ref{disp}), one can show the quasi-stability estimate by simply taking $k$ large---this follows directly from \eqref{enest1} and \eqref{stufff} with $k$ sufficiently large. 
\begin{remark} Note that the proof carried in Lemma \ref{est*} does not apply here, as we do not have a compact set upon which to work. \end{remark}

H\"{o}lder continuity in the space $\widetilde Y\equiv [L^2(\Omega) \times H^{-2}(\Omega)]$ is obtained as follows\footnote{We can first consider strong solutions, and extend by density.}: Consider a function $\zeta \in H_0^2(\Omega)$ and multiply \eqref{difference} by $\zeta$, integrating over $\Omega$:
\begin{equation}
(z_{tt},\zeta)_{\Omega} \le ||z||_2~||\zeta||_2+k||z_t||~||\zeta||+||\cF(z)||_{-\eta}~||\zeta||_{\eta}+C||z||_{2-\eta}~||\zeta||
\end{equation}
Integrating from $s$ to $t$, we have:
\begin{equation}
\big|(z_t(t)-z_t(s),\zeta)\big| \le C(R)||\zeta||_2\int_s^t\Big(||z||_2+||z_t||\big)d\tau \le C(R)||\zeta||_2|t-s|,
\end{equation}
where we have appealed to the locally Lipschitz nature of the nonlinearity $f_V: H_0^2(\Omega) \to L^2(\Omega)$  owing to the sharp Airy stress function regularity and the existence of the absorbing set $\mathcal B$.

In addition, we note
\begin{equation}
||z(t)-z(s)|| = \left|\left|\int_s^t z_t(\tau) d\tau\right|\right| \le C(R)|t-s|.
\end{equation}
H\"{o}lder continuity (in the sense of \eqref{holder}) of the dynamics in  the topology of $\widetilde Y$ follows. Thus, we obtain Theorem \ref{largek1} from Theorem \ref{expattract*}. 
\end{proof}
\begin{remark}
We note here that Theorem \ref{largek1} was established in \cite{springer} as described above. We discuss it here primarily for contrast with what is presented in the sequel. 
One of the features of this approach is that the obtained exponential attractor---though a subset of the state space element-wise---may be unbounded in the corresponding  topology of $Y$.\end{remark}

\section{Analysis of the Berger Plate: $f=f_B$}
In this section we focus on $f=f_B$. Certain aspects of the analysis are simplified from the case of $f=f_V$. Additionally, if we consider the damping coefficient to be large (as in the hypotheses of Theorem \ref{largek1}), we obtain even stronger results than Theorem \ref{largek1} above.
\subsection{Finite Dimensional, Smooth Global Attractor AND Generalized Fractal Exponential Attractor in One Shot}
Since the structure of $f_B$ is simplified from that of $f_V$, we can show the quasi-stability estimate on any bounded, forward invariant set. 
For this, we demonstrate a decomposition of
Berger nonlinearity for the difference of two solutions. From this lemma the completion of the proof of Theorem \ref{th:main1} and Theorem \ref{th:main2} follow easily as corollaries. 
\begin{lemma}\label{decomp}
Let $z=u^1-u^2$, and let $f_B(u)=(b -||\nabla u||^2)\Delta u$,
and $\mathcal F(z) = f_B(u)-f_B(w)$. Also assume $u^1,u^2 \in
C(s,t;(H^2\cap H_0^1)(\Omega))\cap C^1(s,t;L^2(\Omega))$. Then
\begin{align*}
\int_s^t (\mathcal F(z),z_t)_{\Omega} d\tau  =&-\dfrac{1}{2}\Big[\big(b-||\nabla u^1||^2\big)||\nabla z||^2\Big]_s^t \\
&~-\int_s^t\big(||\nabla u^1||^2-||\nabla u^2||^2\big)(\Delta u^2,z_t) d\tau \\
&~-\int_s^t(\Delta u^1, u^1_t)||\nabla z||^2d\tau
\end{align*}
\end{lemma}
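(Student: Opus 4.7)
The plan is to establish the identity by a direct algebraic decomposition of $\mathcal F(z)$ followed by two integrations by parts---one in space, one in time. The argument is elementary for strong solutions; the stated regularity class is then reached by a routine density argument at the end.

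First I would split
$$\mathcal F(z) = f_B(u^1) - f_B(u^2) = (b - ||\nabla u^1||^2)\Delta z - (||\nabla u^1||^2 - ||\nabla u^2||^2)\Delta u^2,$$
obtained by adding and subtracting $(b - ||\nabla u^1||^2)\Delta u^2$. Pairing with $z_t$ and integrating in $\tau$, the second summand directly produces one of the three terms on the right-hand side of the stated identity, namely $-\int_s^t (||\nabla u^1||^2 - ||\nabla u^2||^2)(\Delta u^2, z_t)_{\Omega} d\tau$. Only the contribution from $(b - ||\nabla u^1||^2)(\Delta z, z_t)_{\Omega}$ requires further manipulation.

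Next, using the clamped boundary condition $z = \partial_\nu z = 0$, integration by parts in space gives $(\Delta z, z_t)_{\Omega} = -(\nabla z, \nabla z_t)_{\Omega} = -\tfrac{1}{2}\tfrac{d}{d\tau}||\nabla z||^2$. Integrating in $\tau$ and performing an integration by parts in time to move the derivative off $||\nabla z||^2$ yields
$$\int_s^t (b - ||\nabla u^1||^2)(\Delta z, z_t)_{\Omega} d\tau = -\tfrac{1}{2}\Big[(b - ||\nabla u^1||^2)||\nabla z||^2\Big]_s^t + \tfrac{1}{2}\int_s^t \tfrac{d}{d\tau}(b - ||\nabla u^1||^2)\cdot ||\nabla z||^2 d\tau.$$
The boundary-in-time term is precisely the first summand of the lemma. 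For the remaining integral, I compute $\tfrac{d}{d\tau}(b - ||\nabla u^1||^2) = -2(\nabla u^1, \nabla u^1_t)_{\Omega}$, and a further spatial integration by parts (using $u^1 = 0$ on $\partial\Omega$) rewrites this as $2(\Delta u^1, u^1_t)_{\Omega}$. Collecting the pieces then recovers the stated decomposition.

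The principal technical point is the spatial IBP producing $(\nabla u^1, \nabla u^1_t)_{\Omega} = -(\Delta u^1, u^1_t)_{\Omega}$: under the stated hypotheses $u^1_t$ sits only in $L^2$, so the pairing $(\nabla u^1, \nabla u^1_t)_{\Omega}$ does not admit an immediate classical interpretation. I would therefore carry out the full calculation first for strong solutions (with $u^i \in C(s,t; W) \cap C^1(s,t; H_0^2(\Omega))$), where every step is unambiguous, and then pass to the regularity class stated in the lemma by density. Continuity of both sides of the identity in the state-space topology is ensured by the locally Lipschitz estimate on $f_B$ recorded in Theorem \ref{nonest}, together with the elementary boundedness of $||\nabla u^i||^2$ and $||\nabla z||^2$ on bounded subsets of $Y$. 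This density extension is the only non-algebraic ingredient of the proof.
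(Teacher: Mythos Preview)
Your proposal is correct and follows essentially the same route as the paper: the identical add-and-subtract decomposition $\mathcal F(z)=(b-\|\nabla u^1\|^2)\Delta z-(\|\nabla u^1\|^2-\|\nabla u^2\|^2)\Delta u^2$ and the spatial integration by parts $(\Delta z,z_t)_\Omega=-\tfrac12\tfrac{d}{d\tau}\|\nabla z\|^2$. The paper's proof stops at the pointwise-in-time identity and simply says ``This implies the result,'' whereas you carry out the temporal integration by parts, the computation of $\tfrac{d}{d\tau}(b-\|\nabla u^1\|^2)$, and the density passage explicitly; this extra care is a virtue rather than a departure.
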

\begin{proof}[Proof of Lemma \ref{decomp}]
Letting $z=u^1-u^2$, and letting $B(u)=(b-||\nabla u||^2)$, we note:
\begin{align*}
( \mathcal F(z),z_t )_{\Omega} =&~\big(B(u^1)\Delta u^1-B(u^2)\Delta u^2,z_t\big)\\
=&~\Big(B(u^1)\Delta z+\big(B(u^1)-B(u^2)\big)\Delta u^2,z_t\Big)\\
=&~B(u^1)\big(\Delta z,z_t)_{\Omega}+\big(B(u^1)-B(u^2)\big)(\Delta u^2,z_t)_{\Omega}\\
=&~-\dfrac{1}{2}B(u^1)\dfrac{d}{dt}||\nabla z||^2+\big(B(u^1)-B(u^2)\big)(\Delta u^2,z_t)_{\Omega}.
\end{align*}
This implies the result. 
\end{proof}

\noindent At this point, restricting to a bounded, forward-invariant set (radius denoted by $R$)\begin{equation*}
||u^1(t)||_{2}+||u^1_{t}(t)||_{0}+||u^2(t)||_{2}+||u^2_{t}(t)||_{0}\leq
C(R),~~t>0,
\end{equation*}
using triangle inequality
\begin{align*}\left|~||\nabla u^1||^2-||\nabla u^2||^2~\right|=&~\Big|||\nabla u^1||-||\nabla u^2||\Big|\left(||\nabla u^1||+||\nabla u^2||\right)\\
\le &~||\nabla u^1 - \nabla u^2||\left(||\nabla u^1||+||\nabla u^2||\right)\\
\le &~C(R)||z||_1,
\end{align*}
and taking into account the last two inequalities in Lemma
\ref{decomp} we obtain the {\em key inequality}:
\begin{equation}\label{usethis}\left|\int_s^t(\mathcal F(z),z_t)d\tau \right| \le \epsilon\int_s^tE_z(\tau)d\tau+C(\epsilon,R,T)\sup_{[s,t]} ||z||^2_{2-\eta},~~\eta>0.\end{equation}

As before, on any bounded set $B$ (for instance on the absorbing set constructed in the proof of Theorem \ref{disp}), we obtain the quasi-stability estimate with \eqref{usethis} in \eqref{enest1}. Hence, for $(S_t,Y)$ with $f=f_B$, by Theorem \ref{doy} taken in conjunction with Theorem \ref{disp}, there exists a compact global attractor $\mathcal A$. In addition, $\mathcal A$ is a bounded, forward invariant set; by Theorem \ref{dimsmooth}, and repeating the analysis at the end of Section \ref{refone} (done for $f=f_V$), the attractor $\mathcal A \subset W$ (bounded in $W$) and has finite fractal dimension. This completes the proof of Theorem \ref{th:main1}.

Noting that (as in the case of $f=f_V$) the dynamics are uniformly H\"{o}lder continuous on the weaker space $\widetilde Y$, we can appeal to  Theorem \ref{expattract*}. Indeed, we have the quasi-stability estimate on $\mathcal B$, and thus the existence of a generalized fractal exponential attractor $\mathcal A_{\exp}$ (with dimension finite in $\widetilde Y$) is guaranteed. This now completes the proof of Theorem \ref{th:main2}.
\begin{remark} We emphasize that in the case of $f=f_B$, owing to \eqref{usethis}, it was not necessary to utilize large $k$ to obtain the quasi-stability estimate on the absorbing ball $\mathcal B$. This is due to the availability of estimate \eqref{usethis}, which was not readily available in the case $f=f_V$. And, as seen above, to obtain the quasi-stability estimate on $\mathcal B$ in that case, it was necessary to take $k$ large.
\end{remark}

\subsection{Proper Exponential Attractor with Large Damping}

In this section we proceed to show Theorem \ref{largek2}; namely, that with $k$ large, the generalized fractal exponential attractor from the previous section (obtained for any damping coefficient $k>0$) can be improved. Indeed, here we will obtain---for sufficiently large damping---a proper fractal exponential attractor whose dimension is finite in the state space $Y$ itself (rather than only in $\widetilde Y$).

The primary theorem utilized in this section is Theorem \ref{exp2}, and is given in \cite[Theorem 3.4.8, p. 133]{quasi}.
The approach utilized here is:
\begin{enumerate}
\item We first show that, by considering large damping---$k(\mathcal B)$---there is a smooth set $\mathcal S$ (i.e., contained in $W\cap \mathcal B$) which is uniformly exponentially attracting. To this end, we will decompose the nonlinear dynamics into a smooth dynamics and an exponentially stable dynamics in the topology of the state space. 

\item Having {\em constructed} $\mathcal S$ we can operate directly on this set; since we have that the quasi-stability estimate holds on $\mathcal B$ (or any forward invariant set), it will certainly apply on $\mathcal S$ and we shall apply Theorem \ref{exp2}.
\end{enumerate}

\subsubsection{Decomposition of Finite-Energy Dynamics}\label{decomp*}
Denote by $\mathcal B \subset Y$ a fixed absorbing ball of radius $R$ for the dynamics $(S_t,Y)$ taken with $f=f_B$. We have $\mathcal A \subset \mathcal B$. We note that, as mentioned above, for $k>k_*>0$ the size of the absorbing ball $\mathcal B$ can be taken uniform in $k$.  We note that by waiting sufficiently long, we can reduce dynamics corresponding to any initial data to dynamics with initial data chosen from $\mathcal B$. We will denote this dependence by $R$---the radius of $\mathcal B$---and consider an initial time $t_0$ such that $(u(t_0),u_t(t_0)) \in \mathcal B$. 

In this section we prove the following Theorem:
\begin{theorem}\label{stuff}
Let $R$ denote the radius of the absorbing set $\mathcal B$ in $Y$, and recall the notation $W\equiv (H^4\cap H_0^2(\Omega)) \times H_0^2(\Omega)$.
Then ~$\exists~M(R), C(R),$ and $\gamma>0$ such that for any bounded $B\subseteq Y$ 
\begin{equation}\label{yep} d_Y\Big(S_t(B), \overline{B_M\left( W\right)} \Big) \le C_R e^{-\gamma t},\end{equation}
where $B_M(W)$ is a ball of radius $M$ in the space $W$. By construction $\overline{B_M(W)} \subset \mathcal B$.
\end{theorem}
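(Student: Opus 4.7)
\textbf{Proof plan for Theorem \ref{stuff}.} The idea is to decompose each trajectory $u$ into a component that decays exponentially in $Y$ and a component that remains uniformly bounded in the smoother space $W$. By the ultimate dissipativity of $(S_t,Y)$ (Theorem \ref{disp}), it suffices to consider initial data $(u_0,u_1)\in\mathcal B$, since any bounded $B \subset Y$ is absorbed into $\mathcal B$ in finite time. For such data write $u = v + w$, where $v$ solves the linear clamped damped plate $v_{tt} + \Delta^2 v + k v_t = 0$ with $(v,v_t)(0) = (u_0,u_1)$, and $w := u - v$ satisfies
\begin{equation*}
w_{tt} + \Delta^2 w + k w_t = G(u), \qquad (w,w_t)(0) = (0,0), \qquad G(u) := p + Lu - f_B(u).
\end{equation*}
The linear clamped damped plate semigroup on $Y$ is exponentially stable for every $k>0$, so $\|(v(t),v_t(t))\|_Y \le Ce^{-\gamma_k t}\|(u_0,u_1)\|_Y$. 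If one can in addition establish $\sup_t\|(w,w_t)\|_W \le M(R)$, then with $\mathcal S := \overline{B_M(W)}$ (with $M$ chosen large enough that $\mathcal S \subset \mathcal B$ via $W \hookrightarrow Y$), one obtains $d_Y(u(t),\mathcal S) \le \|u(t)-w(t)\|_Y = \|v(t)\|_Y \le C_R e^{-\gamma t}$, which is exactly \eqref{yep}.

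To secure $w \in L^\infty(0,\infty;W)$, note that $G(u) \in L^2$ is bounded uniformly in $t$ since $u(t)\in\mathcal B$, and the basic energy identity for $w$ yields $(w,w_t)\in L^\infty(Y)$. Differentiating the $w$-equation in time and setting $\widetilde w := w_t$ produces a damped plate problem with initial data $(0,G(u_0))\in Y$ and forcing $G_t(u) = -f_B'(u)u_t + L u_t$. Testing against $w_{tt}$ yields an identity of the form $\tfrac{d}{dt}E_2 + k\|w_{tt}\|^2 = (G_t(u),w_{tt})$, where $E_2 := \tfrac12\bigl(\|w_{tt}\|^2+\|\Delta w_t\|^2\bigr)$. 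Using the Berger structure $f_B'(u)h = (b-\|\nabla u\|^2)\Delta h - 2(\nabla u,\nabla h)\Delta u$ and the pointwise bound $|(\nabla u,\nabla u_t)| = |(\Delta u,u_t)| \le C(R)\|u_t\|$, the principal-order contribution integrates by parts in space to $-(b-\|\nabla u\|^2)(\nabla w_t,\nabla w_{tt}) = -\tfrac12(b-\|\nabla u\|^2)\tfrac{d}{dt}\|\nabla w_t\|^2$; this folds cleanly into a modified energy $\widetilde E_2$, while the non-dissipative remainders are absorbed by $k\|w_{tt}\|^2$ provided $k\ge k_0(R)$. A Gronwall argument then gives $\widetilde E_2(t) \le M_1(R)$ uniformly in $t$, and elliptic regularity applied to $\Delta^2 w = G(u) - w_{tt} - kw_t \in L^2$ delivers $\|w(t)\|_{H^4} + \|w_t(t)\|_{H^2} \le M(R)$, as required.

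\textbf{Main obstacle.} The crux is the higher-energy estimate above, because $u_t = v_t + w_t$ enjoys only $L^2$ spatial regularity, so the principal piece $(b-\|\nabla u\|^2)\Delta u_t$ of $G_t(u)$ formally lies in $H^{-2}$ and cannot be paired pointwise with $w_{tt}\in L^2$. The $w_t$ contribution is processed by the spatial integration-by-parts trick just described. The coupling with the rough remainder $v_t$ must be handled separately via integration by parts in time, exploiting the exponential decay $\|v_t(t)\|_{L^2}\le Ce^{-\gamma_k t}$: the boundary terms either vanish or decay, while the interior residual is absorbed into the dissipation $k\|w_{tt}\|^2$. It is precisely here that the hypothesis that $k$ be sufficiently large (depending on $\Omega,L,b,p$) enters decisively, accounting for the dependence of $k_{\min}$ on the problem parameters.
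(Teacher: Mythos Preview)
Your decomposition $u=v+w$, with $v$ solving the purely linear homogeneous damped plate and $w$ absorbing the entire nonlinearity $f_B(u)$, does not close at the higher energy level. The obstacle you flag is real, but the proposed fix---integrating the $v_t$ contribution by parts in time---fails. Writing
\[
B(u)(\Delta v_t, w_{tt}) = \tfrac{d}{ds}\bigl[B(u)(\Delta v_t, w_t)\bigr] - B'(u)(\Delta v_t, w_t) - B(u)(\Delta v_{tt}, w_t),
\]
the last term is $B(u)(v_{tt},\Delta w_t)$; but $v_{tt} = -\Delta^2 v - kv_t$ lies only in $H^{-2}(\Omega)$ (your initial data give $v\in H_0^2$ and no better, and the damped plate has no parabolic smoothing), so this pairing would require $\Delta w_t \in H^2$, i.e.\ $w_t \in H^4$---strictly more than the $w_t\in H_0^2$ you are trying to prove. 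Integrating by parts the other way produces $w_{ttt}$; spatial integration by parts requires $w_{tt}\in H^1$ or $H^2$. In every arrangement the source $B(u)\Delta v_t$ is two derivatives short of $L^2$, and exponential decay of $\|v_t\|_{L^2}$ does not repair a regularity deficit in a hyperbolic problem.

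The paper avoids this by distributing the nonlinearity across the decomposition. Writing $f_B(u) = B(u)\Delta u = B(u)\Delta z + B(u)\Delta w$, it places $\mathcal F(z,u)=B(u)\Delta z$ (together with an artificial $+\beta z$, needed to force decay despite the non-dissipative $Lz$) into the $z$-equation, and only $\mathcal F(w,u)=B(u)\Delta w$ into the $w$-equation. The time derivative of the latter is
\[
\tfrac{d}{dt}\bigl[B(u)\Delta w\bigr] = (\nabla u,\nabla u_t)\Delta w + B(u)\Delta w_t,
\]
which \emph{is} in $L^2$: the first summand because $(w,w_t)$ is already bounded in $Y$, the second because $\Delta w_t=\Delta\bar w$ is exactly the quantity in the higher energy being estimated. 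Variation of parameters (with an auxiliary $+K\bar w$ to boost the linear decay rate) then closes for $k$ large. The essential idea you are missing is that the exponentially decaying component must carry its share $B(u)\Delta z$ of the nonlinearity, so that the smooth component confronts only $B(u)\Delta w$ rather than the full $B(u)\Delta u$.
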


\begin{proof}[Proof of Theorem \ref{stuff}] We will  apply the following decomposition of  the solution\footnote{The decomposition is inspired by, but different in structure, than the decomposition that has been used for the wave equation
in the proof of exponential attraction i.e.,  \cite{fgmz}.}: 

Take $(u(t_0),u_t(t_0)) \in \mathcal B \subset Y$. We write $u = z+w$, where
 $z$ and $w$ correspond to the systems:
\begin{equation}\label{exp}\begin{cases} z_{tt} + k z_t + \beta z+ \Delta^2 z +\cF(z,u)  = Lz~~\text{ in } \Omega\\[.2cm]
\cF(z,u) =  [b-||\nabla u||^2]\Delta z\\[.2cm]
z = \Dn z =0 ~~\text{ on } \Gamma,\\
z(t_0) = u(t_0),~~ z_t(t_0) = u_t(t_0).\end{cases}\end{equation}
\begin{equation}\label{smooth}\begin{cases} w_{tt} + k w_t + \Delta^2 w  = p+ Lw-\cF(w,u)+\beta z~~ \text{ in } \Omega\\[.2cm]
 \cF(w,u) = [b-||\nabla u||^2]\Delta w\\[.2cm]
w=\Dn w=0~~\text{ on } \Gamma,\\ w(t_0) = 0, ~~w_t(t_0) = 0,\end{cases}\end{equation}
where the constant $\beta > 0$ will be selected suitably large. 
\begin{remark}
Since we are taking null initial data for the the $w$ portion of the decomposition, we note that the presence of $u$ in the term $\mathcal F(w,u)$ is the ``driver" of the dynamics. 
\end{remark}
 
Given $u$, the $z/w$ system is well-posed with respect to finite energy topology $Y$. Indeed, in light of the stability estimates shown below for the $z$ dynamics (with $u$ {\em given}) the general analysis for abstract plate equations in \cite{springer} applies; from there, the $w$ dynamics are well-posed as a difference $w=u-z$. We state this as a lemma:
\begin{lemma}
Consider the dynamics in \eqref{exp} at time $t_0$, with initial data \\ $(u(t_0),u_t(t_0)) \in \mathcal B \subset Y$. Given $u \in C([t_0,T];H_0^2(\Omega))$ the dynamics corresponding to \eqref{exp} are well-posed in the sense of generalized solutions \cite{springer}. Then, considering $w=u-z$, with  $u$ corresponding to the solution \eqref{plate} on $Y$, the problem \eqref{smooth} is well posed on $Y$ with $(w,w_t)$ as a solution.
\end{lemma}
\noindent Let $$E_{\beta}(z(t))\equiv \dfrac{1}{2}\big[||\Delta z||^2+||z_t||^2+\beta ||z||^2\big].$$ For the decomposed plate dynamics, we will show the following supporting lemmas, which together yield Theorem \ref{stuff}:
 \begin{lemma}\label{expattract}
There exists  $k_e(R)>0$ and  $\beta_e(R)>0$ such that for all $k>k_e$ and all $\beta>\beta_e$, the quantity $E_{\beta}(z(t))$ decays exponentially to zero. In fact, $S^z_t(y_0)=(z,z_t)$ decays uniformly exponentially to zero in $Y$ for $y_0 \in B_{Y}(R)$ (with rate depending on $k$ and $\beta$).
 \end{lemma}

  \begin{lemma}\label{regattract} There exists a $k_Q(R)$ such that for any $k>k_Q$ all $\beta >0$ the evolution $(w,w_t)$ on $Y$ corresponding to \eqref{smooth} has that  $(w,w_t,w_{tt}) \in  C([t_0,\infty);W\times L^2(\Omega)).$ 
 \end{lemma}
 
 \begin{proof}[Proof of Lemma \ref{expattract}]
We consider the Lyapunov approach, as was used above in Section \ref{dissp} (also used in \cite{springer,delay}, based upon \cite{Memoires}) to show the existence of an absorbing set. Define the Lyapunov-type function:
\begin{align}
\widehat V(S^z_t(y_0)) \equiv &~E_{\beta}(z)+\big( z_t,z\big)_{\Omega} +\frac{k}{2}||z||^2
\end{align}
where $S^z_t(y_0) = (z(t),z_t(t))$ for $t \ge t_0,$ and $\beta$ is some positive number to be specified later. Using Young's inequality, we have immediately that
\begin{equation}\label{energybounds*}
cE_{\beta}(z(t)) \le \widehat V(S^z_t(y_0)) \le C(k)E_{\beta}(z(t))
\end{equation}
The constant $c$ does not depend on either damping parameter.
We consider the analog of the analysis in Section \ref{dissp}; rather then employing Proposition \ref{potentiallowerbound} on lower order terms (which yields a constant---see Lemma \ref{le:48}), we may scale $\beta$ instead. This yields:
\begin{lemma}\label{le:48*}
There exists $k_e$ and $\beta_e$ so that for $k>k_e>0$ and $\beta>\beta_e>0$:
\begin{equation}\label{goodneg*}
\dfrac{d}{dt}\widehat V(S^z_t(x))\le-c(k,\beta)\Big\{||z_t||^2+||\Delta z||^2 +||z||^2\Big\},
\end{equation}
where $c(k,\beta) \to \infty$ as $\min\{k,\beta\}\to \infty$.
\end{lemma}
From this lemma, and the upper bound in \eqref{energybounds*}, we have for some $\gamma(k,\beta)>0$: 
\begin{equation*}
E_{\beta}(z(t))\le C\widehat V(S^z_t(y_0)) \le C \widehat V(y_0)e^{-\gamma t},~~t \ge t_0.
\end{equation*} 
This concludes the proof of Theorem \ref{expattract}, the uniform exponential stability of the $z$ portion of the decomposed dynamics on $\mathcal B$.
\end{proof}

\begin{proof}[Proof of Lemma \ref{regattract}]
Again, recall that we have chosen $t_0$ large enough so that $(u(t_0),u_t(t_0)) \in \mathcal B$. Let $S^w_t(\mathbf 0)=(w(t),w_t(t))$ correspond to the \eqref{smooth} dynamics.
First, we note: 
\begin{lemma}\label{heyo}
The dynamics $S_t^w(\mathbf 0)$ (such that $S_t^w(\mathbf 0)+S_t^z(y_0)=S_t(y_0)$, and $u=w+z$) are uniformly bounded on $Y$. This is to say that:
$$||(w(t),w_t(t))||_{Y} \le C(R), ~\forall~~t\ge t_0.$$\end{lemma}
\begin{proof}[Proof of Lemma \ref{heyo}]
This follows immediately from the existence of a uniform absorbing set $\mathcal B$ for the dynamics $S_t(y_0) = (u(t),u_t(t))$ on $Y$, and the uniform exponential stability of the $S^z_t(y_0)$ dynamics. \end{proof}

To continue with the proof of Lemma \eqref{regattract}, recall $\cF(w,u) = [b-||\nabla u||^2]\Delta w$. Consider the time-differentiated $w$ dynamics; let $\overline w = w_t$:
\begin{equation}\label{whatthe}\begin{cases} \overline w_{tt} + k\overline w_t + \Delta^2 \overline w  = -\dfrac{d}{dt}\left\{\cF(w,u)\right\}+L\overline w+\beta z_t~~ \text{ in } \Omega\\
\overline w=\Dn \overline w=0~~\text{ on } \Gamma\\
\overline w(t_0) = 0, ~~\overline w_t(t_0) = 0.\end{cases}\end{equation}

Recall, the term~ $||z_t||$ decays exponentially, and is thus bounded. We will now obtain a global-in-time-bound on~~$||\overline w||_2+||\overline w_t||$~ in what follows. We now invoke the well known {\em exponential decay of the linear, damped (static {and} viscous) plate equation}\footnote{A proof of this decay is essentially given in the argument above utilizing \eqref{energybounds*}--\eqref{goodneg*}.}  for 
$$\overline w_{tt} + k\overline w_t + K \overline w + \Delta^2 \overline w =G.$$ It is critical to note that the margin of stability is controlled by $k,K$:  ~$\omega(k,K) \to \infty$ ~as ~$\min [k,K] \to \infty$~ (see the analysis yielding \eqref{goodneg} and \eqref{goodneg1}). We consider:
\begin{align} 
\overline w_{tt} + k \overline w_t+K\overline w + \Delta^2 \overline w=&~  -\dfrac{d}{dt}\left\{\cF(w,u)\right\}+L\overline w+\beta z_t+K\overline w\\\equiv&~  G(u,z,w,\overline w, w^t).
\end{align}  Let $\mathbf w(t) = (\overline w(t),\overline w_t(t))$. For $t_0$ (again, sufficiently large) we utilize the {\em variation of parameters formula}:
\begin{multline}\label{varpar0}
\Big|\Big|\mathbf w(t)\Big|\Big|_{Y}  \le 
C \Big|\Big|\mathbf w(t_0)\big)\Big|\Big|_{Y}\\+\int_{t_0}^t e^{-\omega(k,K)(t-s)}\Big[\left|\left|\dfrac{d}{dt}\cF(w,u)\right|\right|_0+||L\overline w||_0+K||\overline w||_0+\beta||z_t||_0\Big]ds.
\end{multline}
 The critical term is $\dfrac{d}{dt}\mathcal F(w,u)$, as the other terms are bounded by Lemma \ref{heyo}. We have:
\begin{align}\label{timenon}
\Big|\Big|\dfrac{d}{dt}\cF(w,u) \Big|\Big|_0
= \Big|\Big|(\nabla u, \nabla u_t)\Delta w+||\nabla u||^2\Delta w_t \Big|\Big|_0
\le~\left|\left|(\Delta u,u_t)\Delta w\right|\right|+||\nabla u||^2\left|\left|\Delta \overline w\right|\right|.
\end{align}
\begin{remark}
This is the key step for which no analog exists if performing this decomposition analysis on the von Karman plate ($f=f_V$). 
\end{remark}
Thus: \begin{equation}\label{keyvK} \left|\left| \dfrac{d}{dt}\cF(w,u)\right| \right|_0 \le C(R)\big(1+||\Delta \overline w||\big).\end{equation}
 Returning to \eqref{varpar0}, and implementing the above bounds in \eqref{keyvK}, we have
\begin{align}
\Big|\Big|\big(\overline w(t),\overline w_t(t)\big)\Big|\Big|_{Y} \le \nonumber&~
C ||(\overline w(t_0),\overline w_t(t_0))||_{Y}\\&+C(R)\int_{t_0}^t e^{-\omega(k,K)(t-s)}\Big[1+K+||\overline w||_2\Big] ds 
\end{align}
 Thus:
\begin{align}
\Big|\Big|\big(\overline w(t),\overline w_t(t)\big)\Big|\Big|_{Y}
\le &~
C ||(\overline w(t_0),\overline w_t(t_0))||_{Y}\\\nonumber&+C(R)\big[1+K+\sup_{s \in [t_0,t]}||(\overline w,\overline w_t)||_{Y}\big]\int_{t_0}^t e^{-\omega(k,K)(t-s)}ds \\
\le &~C\Big|\Big|\big(\overline w(t_0),\overline w_t(t_0)\big)\Big|\Big|_{Y}\\\nonumber&+C(R)\big[1+K+\sup_{s \in [t_0,t]}||(\overline w,\overline w_t)||_{Y}\big]\dfrac{1-e^{-\omega(k,K)(t-t_0)}}{\omega(k,K)}.
\end{align}
A global-in-time bound on $(w_t,w_{tt})=(\overline w, \overline w_t)$ in $Y$ will now follow: (i) we first take suprema in $t \in [t_0,\infty)$. Then (ii) we may choose ~$\min[k,K]$ ~sufficiently large (and thus up-scale $\omega(k,K)$) such that
$$\dfrac{C(R)}{\omega(k,K)}\Big[1-\exp\big(-\omega(k,K)(t-t_0)\big)\Big]<1.$$  Absorbing this term in the LHS completes the bound on $(\overline w, \overline w_t)$.
At this point, elliptic theory can be applied: a bound on $\|w\|_4$ is to be calculated from an elliptic equation in terms of ~$\|(\overline w,\overline w_t) \|_{{Y}}$.
Indeed, we consider biharmonic problem with  the clamped boundary conditions.
$$ \Delta^2 w = - w_{tt} - kw_t  -\mathcal F(w,u) +Lw +\beta z;~~~~w = \Dn w =0 ~\text{on}~ \partial \Omega.$$
This gives, via elliptic estimates (as in Section \ref{smuth}) 
\begin{equation}\label{w4}
\|w(t) \|_4 \leq  C(R)\left[\|(\overline w, \overline w_{t})\|_{Y} +  1\right].
\end{equation}
This completes the proof of Lemma \ref{regattract}. 
\begin{remark}
We note that, above, $K$ appears on the RHS of the estimate and, thus, the global-in-time bounds for the $(\overline w,\overline w_t)$ dynamics depend on the size of $K$, an auxiliary quantity which itself depends on the intrinsic parameters in the problem in the above argument.
\end{remark}
\end{proof}
Taking Lemmas \ref{expattract} and \ref{regattract} together (as in the proofs of \cite[Proposition 5.1]{fgmz} and \cite[Theorem 5.1]{kalzel}) concludes the proof of Theorem \ref{stuff}.

\end{proof}

\subsubsection{Application of Theorem \ref{exp2}}
Here we operate on the smooth set $\mathcal S =\overline{B_K(W)}$ constructed in Theorem \ref{stuff} and apply Theorem \ref{exp2}.
\begin{proof}
Recall $\mathcal S \subset \mathcal B\cap W$; it is both positively invariant (by construction) and compact in $Y$ (as a subset of $W$). This addresses Hypothesis 1 in Theorem \ref{exp2}. Additionally, the dynamical systems requirement (Hypothesis 2) is satisfied by \eqref{dynsys} in Proposition \ref{p:well}. Since we have shown---for $f=f_B$---that $(S_t,Y)$ is quasi-stable on {\em any} bounded, forward invariant set, this includes $\mathcal S$ as a subset of $\mathcal B$. Hence the quasi-stability property (Hypothesis 3) is satisfied on $\mathcal S$. Finally, we address H\"{o}lder continuity on $\mathcal S$ in the topology of $Y$.

By construction we note that $||\mathcal S||_W \le M$. In particular, for any $(w(\cdot),w_t(\cdot)),~~\forall~~t$ sufficiently large, chosen from $\mathcal S$\footnote{Such $w$ can be considered as elements of $\mathcal B$ with $u=z+w=0+w$.} we have \begin{align}
||w(t)||_4 \le C(M),~ &~~||w_t||_2 \le C(M).
\end{align}
And thus we have from 
\begin{align} w_{tt}=&~-\Delta^2w-kw_t-f_B(w)+p+Lw \\
w_t(t)-w_t(s) =&~\int_s^tw_{tt}(\tau)d\tau, \end{align} 
that
\begin{multline}
||w_t(t)-w_t(s)||_0 \le \int_s^t\Big(||w||_4+k||w_t||_0+(b+||\nabla w||^2)||w||_2+C||w||_{2-\eta}+||p||_0\Big) d\tau \\\le C(M,b,p)|t-s|.
\end{multline}
And, as before, we note
\begin{equation}
||w(t)-w(s)||_2 \le \left|\left|\int_s^t w_t(\tau) d\tau \right|\right|_2 \le \big(\sup_{t}||w_t||_2\big)|t-s|\le C(M)|t-s|.
\end{equation}
Thus we note that $(S_t,\mathcal S)$ is uniformly H\"{o}lder continuous in the topology of $Y$ (in the sense of Hypothesis 4 of Theorem \ref{exp2}).

On the strength of Theorem \ref{exp2}, this concludes the proof of Theorem \ref{largek2}.
\end{proof}

\section{Numerical Simulations}\label{numerics}

\def\Ucrit{U_{\text{crit}}}
\def\hUcrit{\hat{U}_{\text{crit}}}
We now demonstrate several qualitative aspects of the piston-theoretic dynamics considered herein. To do so we conduct numerical simulations on the 1-D (beam) analogue of the specific, piston-theoretic plate in \eqref{plate-stand}
\begin{equation}\label{beam-stand}
u_{tt}+\partial_x^4u+ku_t+\lambda f_B(u) = p-\mu Uu_x ~~ \text { in } ~[0,\ell]\times (0,T)
\end{equation}
taken with clamped-clamped boundary conditions. With respect to damping, we take $k=k_0+\mu$ as the damping parameter;  $\mu$ represents the scaling of the frictional damping due to the piston-theoretic term (that is, due to flow effects), and $k_0$ represents the material/imposed damping. We introduce $\lambda$ as a binary flag that indicates whether or not the Berger nonlinearity is in force:
\begin{equation}
\label{bergerdyn}
f_B(u)=\left(b-b_0\|u_x\|^2_{L^2([0,\ell])}\right)u_{xx}.
\end{equation}
Here $b$ represents beam tension $b>0$ or compression $b<0$ (at equilibrium); the parameter $b_0$ represents the strength of the nonlinear effect of stretching on bending. Computations are focused on both the linear model ($\lambda= 0$ in \eqref{beam-stand}), and the Berger dynamics ($\lambda=1$), with corresponding energies (see Section \ref{en})
\begin{equation}
E(t)=\frac{1}{2}\left[\|u_{xx}(t)\|^2+\|u_t(t)\|^2\right],
\label{linen}
\end{equation}
\begin{equation}
\mathcal{E}(t)=E(t)+\Pi_B(t) = E(t)+\frac{b_0}{4}\|u_x(t)\|^4-\frac{b}{2}\|u_x(t)\|^2-\int_0^\ell p u(t)\, dx.
\label{nonlinen}
\end{equation}
Computations were conducted using second-order accurate finite difference approximations for spatial derivatives\footnote{The accuracy of which was verified via forcing a known true solution.}, together with the MATLAB {\tt ode15s} stiff ODE solver for temporal integration on the reduced first-order system.  All simulations were performed with a spatial mesh size of $\Delta x=\ell/100$.

For the simulations below, we take $\mu=1$ and $\ell=1$.\footnote{Of course both $\ell$ and $\mu$ can affect the in/stability of the linear model; these are not the primary coefficients of interest here.} (Typical physical choices of parameters for varying materials and physical applications can be found in several references, for instance \cite{dowell,jfs,vedeneev}.)  The static pressure $p$ is here taken to be zero.
Initial conditions for the simulations below consist of null initial displacement ($u_0(x)=0$) and a parabolic initial velocity profile ($u_1(x)=10x(1-x)$).   For a particular choice of damping parameter $k$, the critical flow velocity represents the value of $U$ at which the linear dynamics become unstable (in the sense of exponential growth in time). 

In this setup, there are three principal regimes of behavior for the linear dynamics $\lambda=0$. In each case, the effects of the initial conditions are conservative (which is to say that for $k=0$ and $U=0$ the energy is constant throughout deflection).
\begin{itemize}
\item~[$k=0$] With no imposed damping, the transient dynamics are not damped out. Additionally, with $U>0$, the non-dissipative piston term $-Uu_x$ on the RHS induces non-trivial behavior in the displacements, as well as oscillatory behavior in the linear energy $E(t)$.  As $U$ is increased the effect of the flow becomes more pronounced and the location of the spectrum is shifted. At a certain value $U_{\text{crit}}$ the dynamics become unstable and exponential growth (in time) of the solution is observed. 
\item~[$k>0$ fixed] With imposed damping in the model the initial transient dynamics are damped out. We can think of $U_{\text{crit}}$ as an increasing function of the damping $k$ in the problem. For $U<U_{\text{crit}}(k)$ energy is dissipated out of the model and solutions are, in fact, (exponentially) stable. 
\item~[$k>0$, $U>U_{\text{crit}}(k)$] Even with damping, for $U>U_{\text{crit}}(k)$, solutions exhibit exponential growth in time. We remark that, for a given unstable $U$, one can consider increasing $k$ to stabilize the dynamics. 
\end{itemize}

\subsection{Influence of Damping on Energy---Without Berger Effects}

First, computed energies $E(t)$ for the linear model ($\lambda=0$) with $k=0$ (no damping) are shown in Figure \ref{fig1}. From a physical point of view, this is an artificial case; we have no imposed damping, and we have ``turned off" the damping due to the flow (on the RHS of \eqref{plate-stand}). For this choice of $k$, an empirically determined approximation to the critical flow velocity is $\Ucrit=636$.  Energy profiles (log-scale) for various choices of $U$ in terms of $\Ucrit$ are given---a linear profile in the energy plot represents exponential growth or decay of the dynamics in the finite energy topology, with margin of in/stability depending on the slope of the profile.  Note the exponential growth in energies for $U\ge\Ucrit$ while $E(t)$ remains bounded for $0<U<\Ucrit$, and is constant for $U=0$.  
\begin{figure}[htp]
\begin{center}
\includegraphics[width=5in]{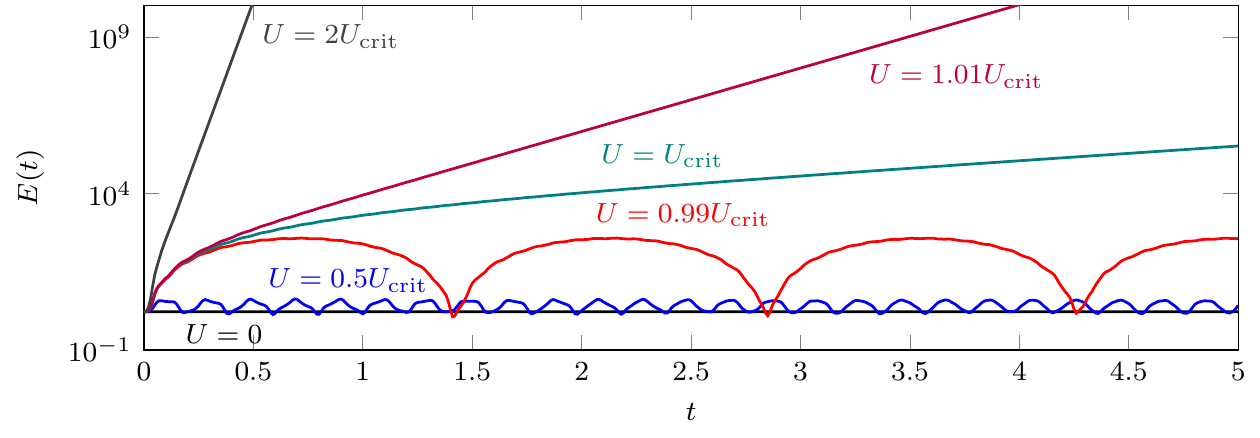}\\
\vspace*{-0.2in}
\caption{Plot of $E(t)$ for $k=0$ and varying $U$, $U_{\text{crit}}=636$.}\label{fig1}
\end{center}
\end{figure}

In the next simulation the damping parameter is set to $k=1$ in the linear model ($\lambda=0$). This can be thought of as including flow damping in the model, without structural (or imposed) frictional damping. This results in an approximate critical flow velocity of $\hUcrit=637$ (the presence of damping perturbs the critical flow velocity corresponding to the onset of instability by roughly $0.2\%$).  Computations were performed again for various choices of $U$ in terms of $\hUcrit$, with energies shown in Figure \ref{fig2}.  Note the effect that the damping has on the decay of energies for all $U<\hUcrit$ (as $E(t)\to 0$ when $t\to\infty$)---each curve for $U<\hUcrit$ has a linear profile (or envelope) with negative slope, indicating exponential decay.  Energy values for $U\ge \hUcrit$ still grow exponentially with damping in this linear model.
\begin{figure}[htp]
\begin{center}
\includegraphics[width=5in]{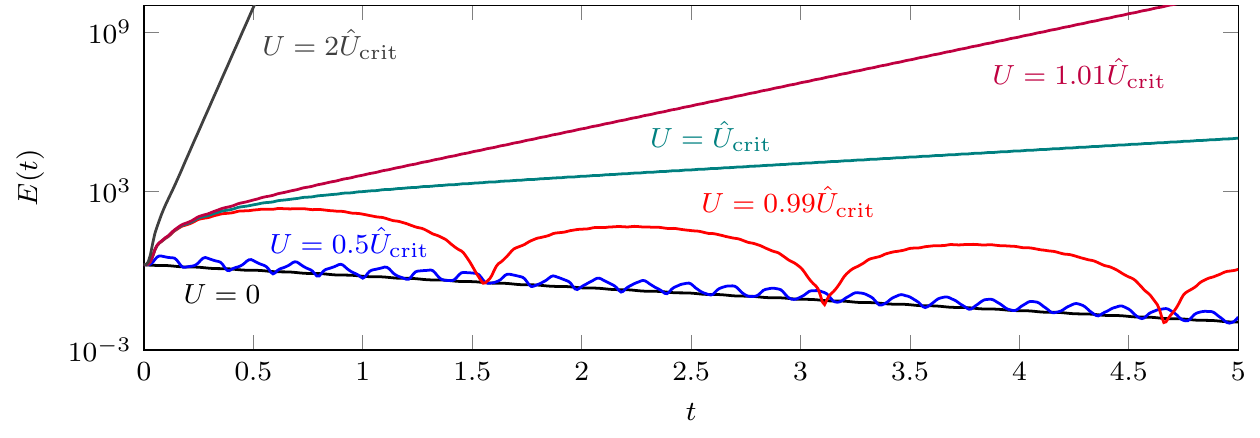}\\
\vspace*{-0.2in}
\caption{Plot of $E(t)$ for $k=1$ and varying $U$, $\hat U_{\text{crit}}=637$, linear model.}
\label{fig2}
\end{center}
\end{figure}

To demonstrate the effect of increasing the damping parameter $k$ for a fixed flow velocity $U$, Figure \ref{fig3} gives the midpoint displacement of the beam for $U=600$ and varying choices of $k$.  At this $U$, for $k=0$ a wave packet-like behavior of the displacement is observed.  As $k$ increases the oscillations are damped, more quickly for higher $k$.
\begin{figure}[htp]
\begin{center}
\includegraphics[width=5in]{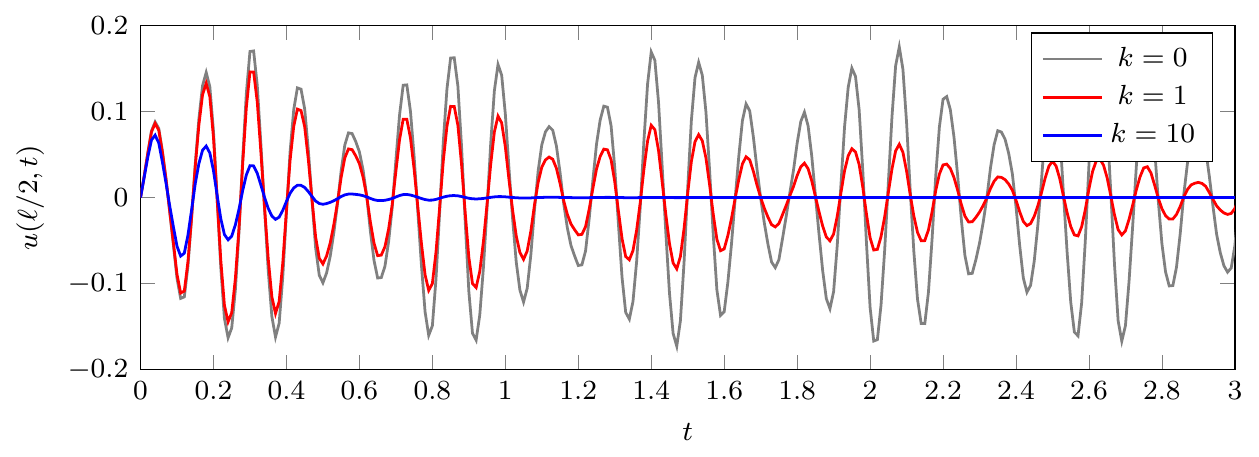}\\
\vspace*{-0.2in}
\caption{Plot of $u$ for $U=600$ and varying $k$, linear model.}
\label{fig3}
\end{center}
\end{figure}

\subsection{Influence of Berger Dynamics on Boundedness of Trajectories}
In Figure \ref{fig4}, the Berger dynamics $f_B(u)$ (with $b=0$ and $b_0=1$) are included into the model and the nonlinear energy $\mathcal{E}(t)$ is shown.  Note that $\mathcal{E}(t)$ (and hence $E(t)$, due to \eqref{enerequiv}) now remains bounded for all supercritical (unstable) velocities $U>\hUcrit$, demonstrating the (Lyapunov) stability that is induced by the $-\|u_x\|^2  u_{xx}$ term. From the point of view of trajectories, each remains bounded for all time, with global-in-time bound dependent upon $U$. Note, for $U>\hat U_{\text{crit}}$, if the nonlinearity is in force ($\lambda=1$) we refer to the non-transient behavior as {\em flutter}, and various qualitative behaviors are possible in this regime. For $U<\hat U_{\text{crit}}$, we note that the nonlinearity does not dramatically affect the stability observed in the linear case.
\begin{figure}[htp]
\begin{center}
\includegraphics[width=5in]{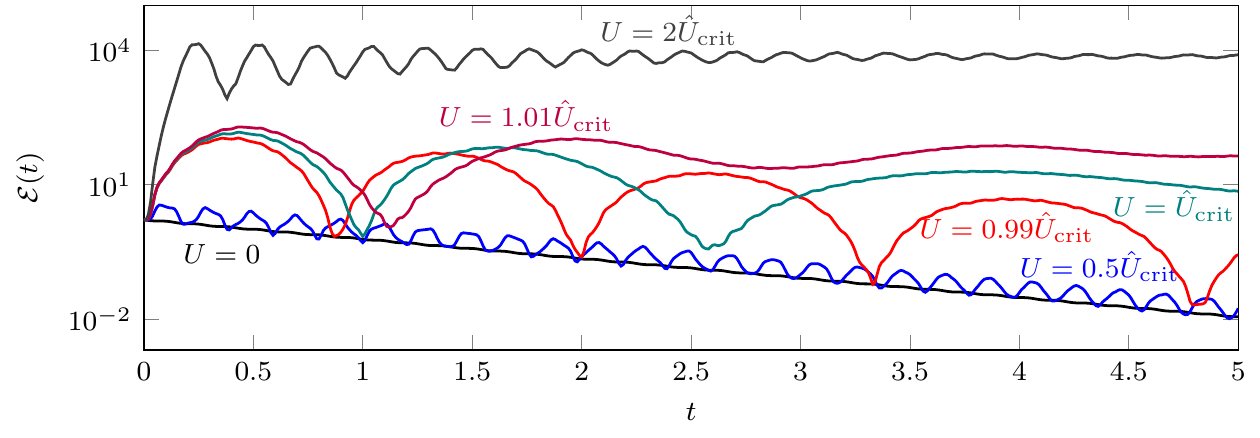}\\
\vspace*{-0.2in}
\caption{Plot of $\mathcal E(t)$ for $k=1$, $b=0$, and $b_0=1$, varying $U$, $\hat U_{\text{crit}}=637$, nonlinear model.}
\label{fig4}
\end{center}
\end{figure}

To show the detailed difference between the energies $E(t)$ and $\mathcal{E}(t)$, Figure \ref{fig5} shows both quantities for $k=1$, $b=0$, and $b_0=1$ with $U=2\hUcrit$.  Figure \ref{fig6} shows the midpoint displacement for the same parameter values. 
\begin{figure}[htp]
\begin{center}
\includegraphics[width=5in]{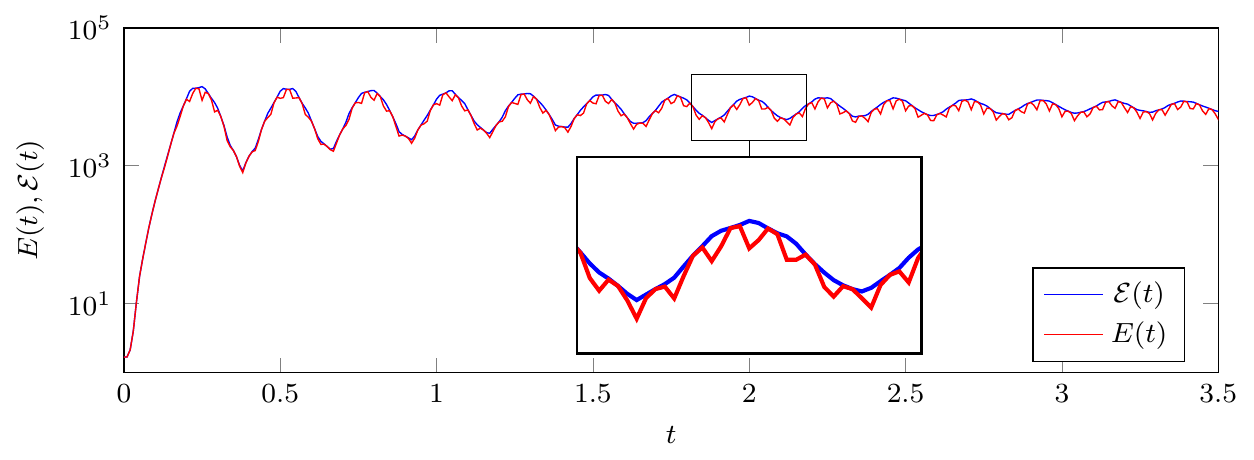}\\
\vspace*{-0.2in}
\caption{Plot of energies for $k=1$ and $U=2\Ucrit$, nonlinear model.}
\label{fig5}
\end{center}
\end{figure}
\begin{figure}[htp]
\begin{center}
\includegraphics[width=5in]{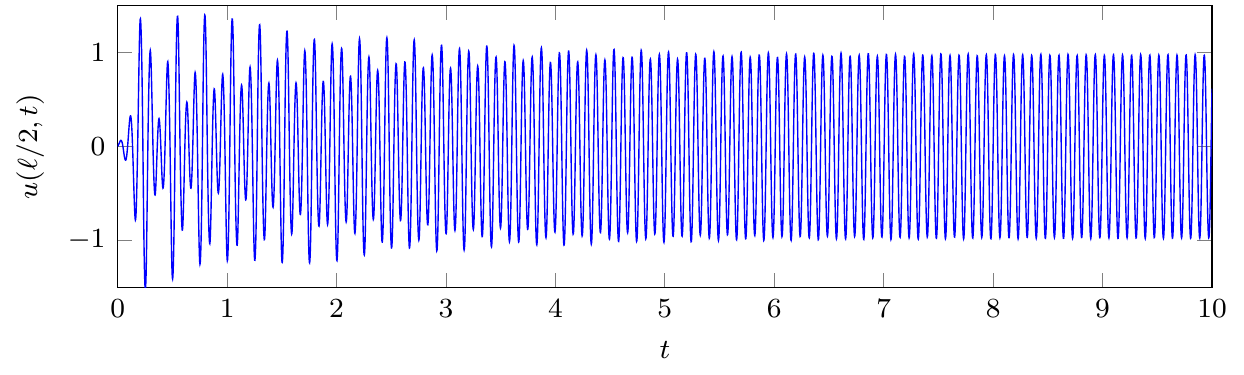}\\
\vspace*{-0.2in}
\caption{Plot of $u$ at beam midpoint for $k = 1$ and $U = 2U_{\text{crit}}$, nonlinear model.}
\label{fig6}
\end{center}
\end{figure}

\subsection{Influence of the $b_0$ Parameter}
The ``restoring" term $-b_0\|u_x\|^2u_{xx}$ in the Berger dynamics maintains the boundedness of trajectories. This can be see through the definition of the nonlinear energy \eqref{nonlinen} and the bound \eqref{enerequiv}. We note that for any $b_0>0$, trajectories will remain bounded (with bound depending on $b_0$).  In simulations for $U=600$, $k=1$, and $b=0$ with several values of $b_0$ we note little qualitative difference in the dynamics for $b_0=0.001$ and $b_0=1$. However for $b_0=100$, we note that both frequency and amplitude of the oscillations are somewhat decreased. 

\subsection{Influence of the $b$ Parameter}

The parameter $b$ in the Berger dynamics translates to in-axis tension if $b<0$ and compression if $b>0$. To demonstrate the effect of this parameter on the beam dynamics, the beam midpoint displacement is plotted in Figure \ref{fig7} for $U=600$, $k=1$, $b_0=0.1$, and several choices of $b$.  Note that in-plane compression has a significant effect on the dynamics by impacting the structure of the set of stationary solutions, while in-plane tension acts as a stabilizing term on the beam oscillations, as well as reduces the frequency of oscillations.
\begin{figure}[htp]
\begin{center}
\includegraphics[width=5in]{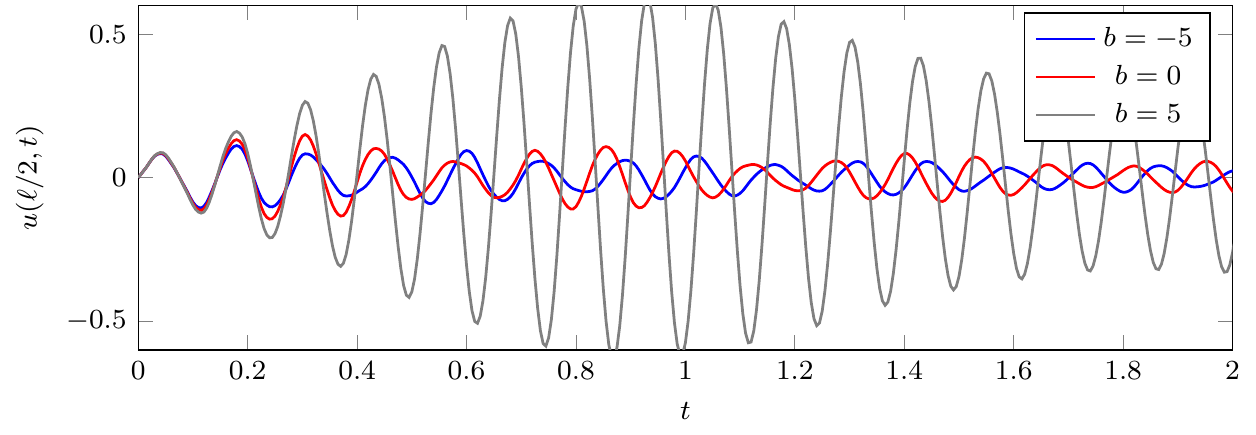}\\
\vspace*{-0.2in}
\caption{Plot of $u$ at beam midpoint for $U=600$, $k=1$, $b_0=0.1$, and varying $b$, nonlinear model.}
\label{fig7}
\end{center}
\end{figure}

\subsection{Convergence to A Non-trivial Steady State}

For certain parameter combinations, the presence of excess in-plane compression leads to a ``buckled'' plate/beam configuration (a bifurcation, for the static problem).  For $b_0=1$, the parameter $b=50$ is large enough for  $U=100$ to impart this behavior.  The transient behavior decays more rapidly to the nontrivial steady state for larger values of $k$.  A plot of the energy $E(t)$ for different values of $k$ is given in Figure \ref{fig8}.  Regardless of the choice of $k$, all simulations for $U=100$, $b_0=1$, and $b=50$ converge to the same nontrivial steady state and same linear energy.  A plot of the nontrivial steady state $u$ (the buckled beam displacement) is given in Figure \ref{fig9}.
\begin{figure}[htp]
\begin{center}
\includegraphics[width=5in]{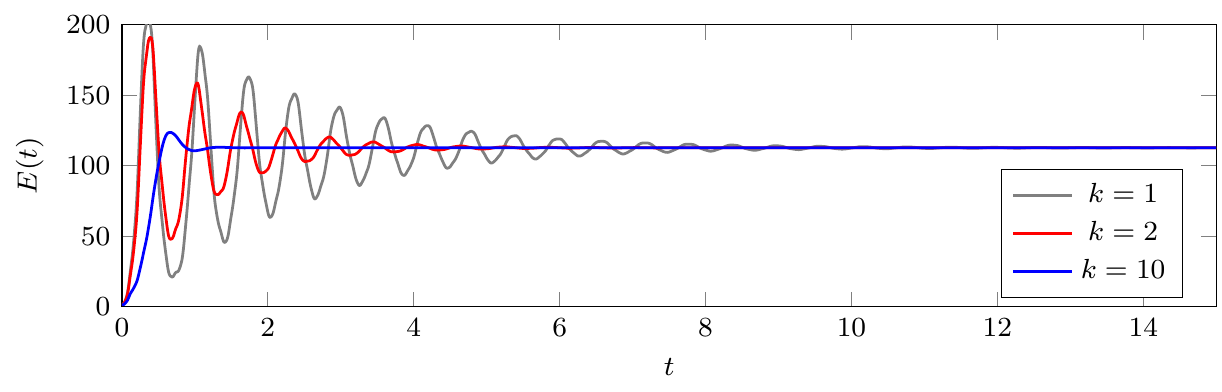}\\
\vspace*{-0.2in}
\caption{Plot of $E(t)$ for for  $U=100$, $b=50$, $b_0=1$, and varying $k$, nonlinear model.}
\label{fig8}
\end{center}
\end{figure}
\begin{figure}[htp]
\begin{center}
\includegraphics[width=5in]{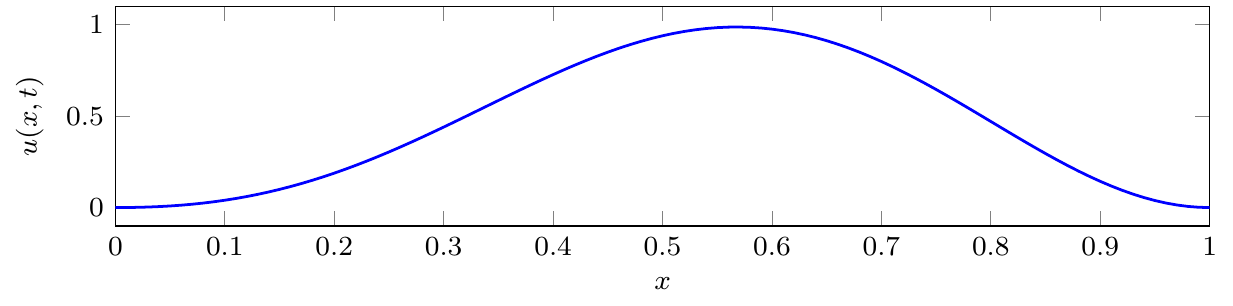}\\
\vspace*{-0.2in}
\caption{Plot of steady-state beam displacement for $U=100$, $b=50$, $b_0=1$, and varying $k$, nonlinear model.}
\label{fig9}
\end{center}
\end{figure}

It is also possible to observe different choices of $k$ resulting in convergence to different steady states.  For $U=100$, $b_0=1$, and $b=100$, the choices $k=1$ and $k=2$ actually produce nontrivial steady states that are negatives of each other.  In Figure \ref{fig10} the midpoint displacement is plotted for these two cases.  
\begin{figure}[htp]
\begin{center}
\includegraphics[width=5in]{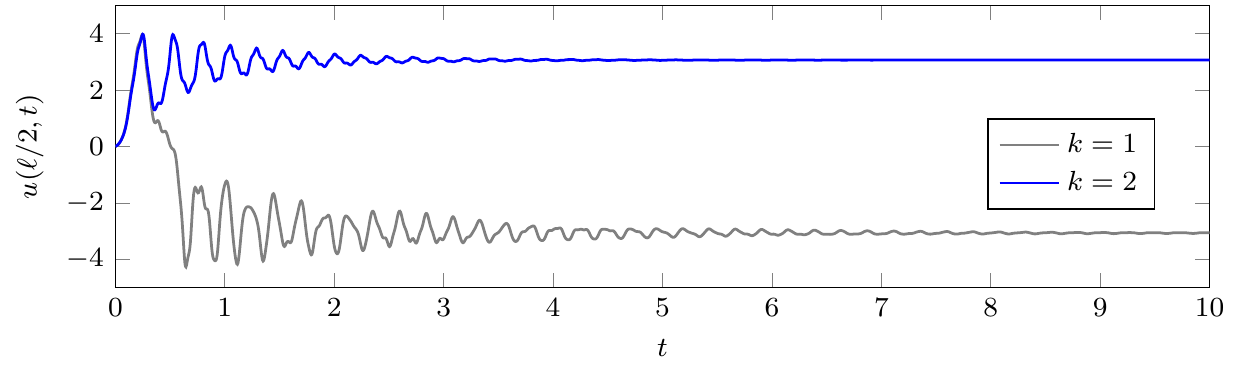}\\
\vspace*{-0.2in}
\caption{Plot of $u$ at beam midpoint for $U=100$, $b=100$, $b_0=1$, and varying $k$, nonlinear model.}
\label{fig10}
\end{center}
\end{figure}

\subsection{Convergence to A Limit Cycle}
In Figure \ref{fig11}, plots of the beam midpoint displacement are shown for a very large flow velocity ($U=5000$---hypersonic), significant in-axis compression parameter ($b=50$), and $b_0=1$ for selected values of the damping parameter $k$.  The sensitivity of the dynamics to the damping parameter can be seen by noting the relative rate of initial decay of energy as $k$ increases---note the quick decay to the oscillatory limit cycle for larger values of $k$.   
\begin{figure}[htp]
\begin{center}
\includegraphics[width=5in]{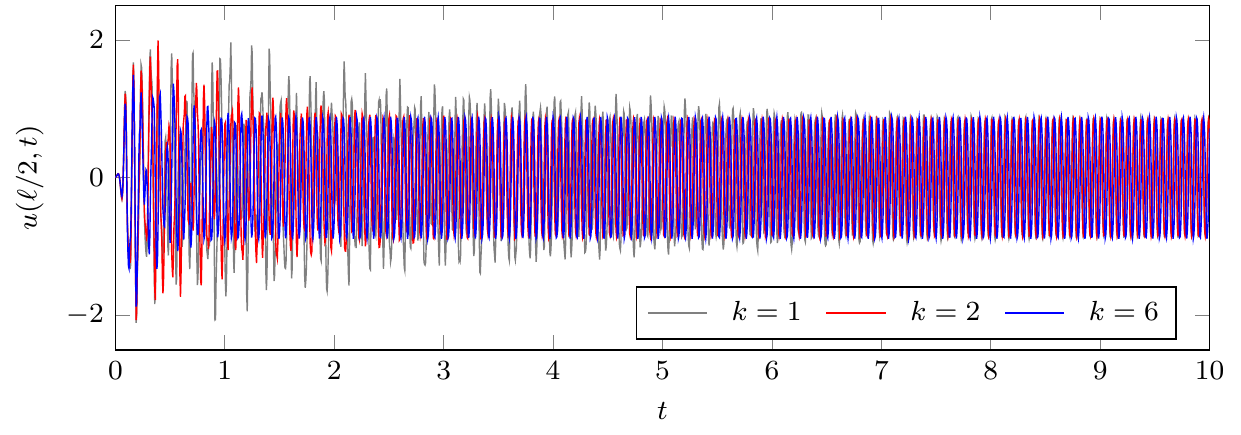}\\
\vspace*{-0.2in}
\caption{Plot of $u$ at beam midpoint for $U=5000$, $b=20$, $b_0=1$, nonlinear model, varying $k$.}
\label{fig11}
\end{center}
\end{figure}

It is possible to induce several different phenomena by manipulating parameter values for the nonlinear dynamics.  In Figure \ref{fig12} the midpoint displacement is shown for $U=5000$, $k=100$, $b=5000$, and $b_0=1000$.  Note the initial transient dynamics are damped out quickly and the dynamics converges to a non-trivial limit cycle. 
\begin{figure}[htp]
\begin{center}
\includegraphics[width=5in]{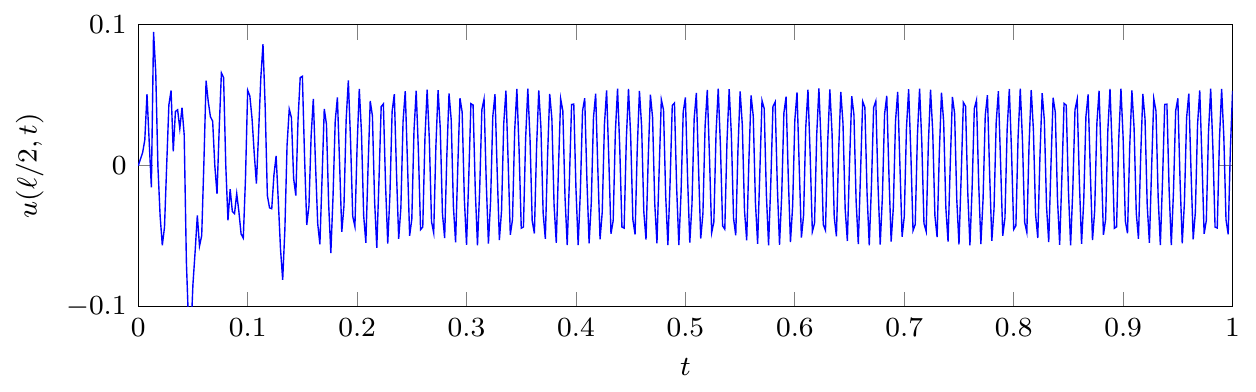}\\
\vspace*{-0.2in}
\caption{Plot of $u$ at beam midpoint for $U=5000$, $b=5000$, $b_0=5000$, $k=101$ nonlinear model.}
\label{fig12}
\end{center}
\end{figure}

\section{Open Questions and Conjectures}
Here we present a brief discussion of open problems and conjectures associated to the analysis above.

We begin with a remark concerning the disparate nature of results shown for $f_B$ and $f_V$. As mentioned in the introduction, $f_B$ is taken to be a simplification of $f_V$, valid for clamped (or hinged) boundary conditions. However, as seen here, there are discrepancies between the nonlinearities (in particular with relation time differentiation of the nonlinearities). On the one hand, as both models are considered valid, it should be the case that results providing qualitative descriptions of the solutions should be the same (or similar). However, it is also reasonable that the simpler Berger model fails to capture some of the nuance and essential dynamics which the von Karman model reflects. 
\subsection{$f=f_B$}
We conjecture that the fractal exponential attractor constructed in the previous section is in fact {\em smooth}. Indeed, the large damping provides a smooth, forward invariant, uniformly exponentially attracting set upon which to work. With this additional smoothness, restricting to this set yields (via the quasi-stability construction \cite{springer} and \cite{quasi}) a fractal exponential attractor with finite dimension in $Y$ (since H\"{o}lder continuity can be shown in this topology). Transitivity of exponential attraction then gives that this restricted fractal exponential attractor is also a fractal exponential attractor for $(S_t,Y)$.  However, the quasi-stability construction of the exponential attractor (see, e.g., \cite[Theorem 7.9.9]{springer} or \cite[Theorem 3.4.7]{quasi} and subsequent remarks) does not address the smoothness properties (or associated bounds) of the exponentially attracting set constructed. Specifically, in this construction (operating on some forward invariant set) we do not have backward dynamics ($t \to -\infty$) to work with.   It is an open question, in general, whether smooth, global attractors are exponentially attracting;  here (with large damping) we have further shown that there is a smooth exponentially attracting set. However, in transitioning between this constructed set and a proper fractal exponential attractor (with finite fractal dimension) {\em smoothness} is potentially lost. 

If piston theory is eschewed, and the full flow-plate system \cite{survey1,survey2,dowellnon,webster} is considered (as well as the reduced, delayed plate corresponding to the ``full potential flow theory"---see \cite{delay,conequil1,conequil2}) the fractal exponential attractor $\mathcal A_{\text{exp}}$ may be shown to be smooth. Indeed, if the reduced plate is obtained from a full flow-plate (gradient) dynamics (see \cite{delay,conequil1,conequil2}) finiteness of the dissipation integral is preserved in the description of the reduced plate. This leads to the following conjecture:
\begin{conjecture}
Consider $f=f_B$ and $k$ sufficiently large (depending on the internal parameters of the problem: $\Omega, L, b,p$). From before, there exists a (proper) fractal exponential attractor ${\mathcal A}_{\text{exp}} \subset Y$, with finite fractal dimension in $Y$, for the dynamics $(S_t,Y)$. Moreover, {\bf if the dissipation integral is finite}: $$\int_{T}^{\infty}||u_t||^2d\tau <\infty,$$ the fractal exponential attractor $\mathcal A_{\text{exp}} \subset W$ (bounded in that topology).
\end{conjecture}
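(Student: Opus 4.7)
The plan is to refine the construction of the fractal exponential attractor so that it is built from a positively invariant, exponentially attracting set $F$ contained in $W$. The finite dissipation integral is precisely the ingredient that propagates $W$-regularity of forward orbits emanating from the smooth exponentially attracting set $\mathcal{S}_0 = \overline{B_M(W)}$ produced by Theorem \ref{stuff}, and hence of $F$ itself.

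First, I would define $F = \overline{\bigcup_{t \ge 0} S_t \mathcal{S}_0}^{\,Y}$. By construction $F$ is positively invariant, and since $\mathcal{S}_0$ exponentially attracts bounded sets of $Y$ and $\mathcal{S}_0 \subseteq F$, the set $F$ inherits the exponential attraction at the same rate. The one outstanding property is the $W$-boundedness of $F$, which is equivalent to a uniform bound $\sup_{t \ge 0}\|u(t)\|_4 \le C$ along trajectories $u(\cdot)$ with $(u(0),u_t(0)) \in \mathcal{S}_0$.

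The main obstacle is establishing this uniform $W$-bound. Since $\mathcal{S}_0 \subset W$ consists of strong-solution data, I would differentiate the Berger equation in time and work with $v = u_t$, which satisfies
\begin{equation*}
v_{tt} + \Delta^2 v + k v_t + f_B'(u) v = L v, \qquad f_B'(u)v = (b - \|\nabla u\|^2)\Delta v - 2(\nabla u, \nabla v)\Delta u.
\end{equation*}
Applying the Lyapunov-type machinery of Section \ref{dissp} to $v$, with the Berger-specific decomposition of Theorem \ref{nonest} adapted to the pairing $(f_B'(u)v, v_t)$, yields---for $k$ sufficiently large---a differential inequality of the form
\begin{equation*}
\frac{d}{dt}\mathcal{V}_v(t) + \delta\,\mathcal{V}_v(t) \le C(R)\bigl(1 + g(t)\bigr)\|v(t)\|^2,
\end{equation*}
where $\mathcal{V}_v$ is equivalent to $\|\Delta v\|^2 + \|v_t\|^2$ and $g \in L^\infty$ depends polynomially on quantities bounded on the absorbing ball $\mathcal B$. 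The hypothesis $\int_T^\infty \|u_t\|^2\,d\tau = \int_T^\infty \|v\|^2\,d\tau < \infty$ renders the right-hand side integrable in time, and a Gronwall-type argument then gives $\sup_t\bigl(\|v(t)\|_2 + \|v_t(t)\|\bigr) \le C$. Elliptic regularity applied to
\[\Delta^2 u = -u_{tt} - k u_t - f_B(u) + p + L u\]
under clamped boundary conditions then yields $\sup_t \|u(t)\|_4 \le C$. Hence $\bigcup_{t\ge 0} S_t \mathcal{S}_0$ is $W$-bounded, and its $Y$-closure $F$ remains $W$-bounded by reflexivity of $W$.

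With $F \subset W$ positively invariant, exponentially attracting, and compact in $Y$, the hypotheses of Theorem \ref{exp2} are in place: the dynamical system estimate \eqref{dynsys} of Proposition \ref{p:well} gives Hypothesis 2; quasi-stability on the bounded forward-invariant set $F$---valid for $f = f_B$ on every bounded forward-invariant set by \eqref{usethis}---gives Hypothesis 3; and H\"older continuity in $Y$ on the $W$-bounded set $F$ is obtained exactly as in the final stage of the proof of Theorem \ref{largek2}. The fractal exponential attractor produced by Theorem \ref{exp2} is then built from finite-dimensional projections of forward orbits lying in $F$ (per the construction in \cite{quasi}), hence is contained in $F$ and therefore bounded in $W$, as required. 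The delicate point is Paragraph 3: the nonlocal term $2(\nabla u, \nabla v)\Delta u$ in $f_B'(u)v$ must be absorbed using the large damping and the time-integrability of $\|v\|^2$ inherited from the dissipation integral---it is here that the Berger structure (versus von Karman) is essential, paralleling the role it played in the proof of Theorem \ref{stuff}.
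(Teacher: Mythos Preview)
This statement is a \emph{conjecture} in the paper, and the authors offer only a proof strategy, not a full argument. Your strategy and the paper's diverge at a crucial point. The paper's route is to lift the entire dynamical-systems analysis to the $W$-topology: use the finite dissipation integral to show $(S_t,W)$ is ultimately dissipative (citing \cite[Section 6.2]{conequil1}), establish quasi-stability on the $W$-absorbing ball (citing \cite[Section 9.5.3]{springer}), apply Theorem~\ref{expattract*} to $(S_t,W)$ to obtain a generalized fractal exponential attractor $\overline{\mathcal A}_{\text{exp}}\subset W$ with finite dimension in $Y$, and then invoke transitivity of exponential attraction (Theorem~\ref{trans}) together with Theorem~\ref{stuff} to conclude that $\overline{\mathcal A}_{\text{exp}}$ attracts bounded subsets of $Y$. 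Your route stays in $Y$: you build a $W$-bounded, positively invariant, exponentially attracting set $F$ by propagating smoothness along forward orbits of $\mathcal S_0$, and then apply Theorem~\ref{exp2} directly.

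The gap in your approach is the final sentence: you assert that the exponential attractor produced by Theorem~\ref{exp2} is ``built from finite-dimensional projections of forward orbits lying in $F$\ldots hence is contained in $F$.'' The paper explicitly identifies this as the open issue. In the paragraph immediately preceding the conjecture, the authors write that ``the quasi-stability construction of the exponential attractor\ldots does not address the smoothness properties (or associated bounds) of the exponentially attracting set constructed,'' and that ``in transitioning between this constructed set and a proper fractal exponential attractor (with finite fractal dimension) \emph{smoothness} is potentially lost.'' In other words, the statement of Theorem~\ref{exp2} only guarantees $A_{\text{exp}}\subset H$, not $A_{\text{exp}}\subset F$, and the authors were not prepared to claim the latter without further argument. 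Your appeal to ``the construction in \cite{quasi}'' may ultimately be justifiable---the $\ell$-trajectory construction does operate inside the positively invariant set---but this is precisely the point that needs to be nailed down, not asserted. The paper's alternative strategy sidesteps the issue entirely by building the attractor in the $W$-dynamics from the outset, at the cost of needing quasi-stability in the stronger topology. Your $W$-propagation argument (time-differentiation plus Gronwall with the integrable forcing $\|u_t\|^2$) is sound in spirit and parallels the paper's citation of \cite{conequil1} for $W$-dissipativity; the divergence is purely in how the exponential attractor is then extracted.
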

\begin{remark}
In the case of the {\em specific} piston-theoretic flutter dynamics given by \eqref{plate-stand}, one may exploit the clamped boundary conditions to obtain that $$U(u_x,u)_{\Omega}=0.$$ This can be utilized when employing the equipartition multiplier $u$ in estimating $\ds\int_0^T E(t) dt$, and subsequently showing boundedness of the dissipation integral. (This property is not clear when $L$ is a general non-dissipative operator.)
\end{remark}
With the finiteness of the dissipation integral we may consider smooth initial data and the corresponding smooth evolution $S_t$ acting on $W \equiv (H^4\cap H_0^2)(\Omega) \times H_0^2(\Omega)$. This evolution is ultimately dissipative in the topology of $W$ (see \cite[Section 6.2, STEP 1.2]{conequil1}). Moreover, exploiting the structure of $f_B$, we can show that $(S_t,W)$ is in fact quasi-stable on the corresponding absorbing ball for $(S_t,W)$---see \cite[Section 9.5.3]{springer}. In addition, we have immediately that the dynamics $(S_t,W)$ satisfy the uniform H\"{o}lder continuity condition in the topology of $Y$ (as above). By the quasi-stability theory utilized herein (Theorem \ref{expattract*}), this dynamical system $(S_t,W)$ has a generalized fractal exponential attractor, whose dimension is finite in $Y$. Thus a fractal exponential attractor $\overline{\mathcal A}_{\text{exp}} \subset W$ is generated which attracts bounded subsets of $W$ with uniform exponential rate. The transitivity of exponential attraction (Theorem \ref{trans}) guarantees that the exponential attractor $\overline{\mathcal A}_{\text{exp}}$ for $(S_t,W)$ ALSO exponentially attracts bounded sets of $Y$, via the property constructed in \eqref{yep}. This leaves us to conclude that $\overline{\mathcal A}_{\text{exp}} \subset W$ is, indeed, an true exponential attractor of finite fractal dimension in $Y$. Moreover, $\overline{\mathcal A}_{\text{exp}}$ is bounded in the topology $W$. 

\subsection{$f=f_V$}
At present, due to the discrepancy between \eqref{4.9aa} and \eqref{p1} (and thus between \eqref{eq4.5} and \eqref{mmmbop}) it does not seem plausible to obtain the quasi-stability estimate in a vicinity of the attractor $\mathcal A$ without assuming large damping. Additionally, the decomposition approach utilized in proving Theorem \ref{stuff}---which leads to the main result Theorem \ref{largek2}---for $f=f_B$ is not viable for $f=f_V$. In working with the time-differentiated dynamics, the Berger nonlinearity admits the bound \eqref{timenon}; such a bound is not possible for the von Karman dynamics. 

\section{Appendix: Long-Time Behavior of Dynamical Systems}

 Let $(S_t,H)$ be a dynamical system on a complete metric space $H$.    $(S_t,H)$ is said to be ultimately dissipative iff it
possesses a bounded absorbing set $\mathcal B$. This is to say
that for any bounded set $B$, there is a time $t_B$ so that
$S_{t_B}(B)\subset \mathcal B$. 

We say that a dynamical system is
\textit{asymptotically compact} if there exists a compact set $K$
which is uniformly attracting: for any bounded set $ D\subset
H$ we have that $$\displaystyle~
\lim_{t\to+\infty}d_{{H}}\{S_t D|K\}=0$$ 
in the sense of
the Hausdorff semidistance. 
$(S_t,H)$ is said to be
\textit{asymptotically smooth} if for any bounded, forward
invariant $(t>0) $ set $D$ there exists a compact set $K \subset
\overline{D}$ which is uniformly attracting (as above).

A \textit{global attractor} $A\subset H$ is a closed, bounded set in $%
H$ which is invariant (i.e. $S_tA={A}$ for all $%
t>0 $) and uniformly attracting (as defined above). 

\begin{remark} Since we are considering a dynamics which are inherently non-gradient, we do not (here) discuss the set of stationary points, strict Lyapunov functions, or certain characterizations available for dynamical systems which are gradient. \end{remark}

\begin{theorem}\label{dissmooth}
Let $(S_t,H)$ be an ultimately dissipative dynamical system in a complete
metric space  $A$. Then $(S_t,H)$ possesses a compact
global attractor ${A} $ if and only if $(S_t,H)$ is
asymptotically smooth.
\end{theorem}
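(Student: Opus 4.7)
The plan is to prove the two implications separately, with the forward direction essentially immediate from standard omega-limit calculus and the converse carrying the bulk of the work. For the forward direction, assume $(S_t,H)$ possesses a compact global attractor $\mathcal{A}$, and let $D\subset H$ be bounded and forward invariant. Consider the omega-limit set
\begin{equation*}
\omega(D)=\bigcap_{s\ge 0}\overline{\bigcup_{t\ge s}S_t D}.
\end{equation*}
Forward invariance $S_t D\subset D$ immediately gives $\omega(D)\subset \overline{D}$. Using compactness of $\mathcal{A}$ to extract a convergent subsequence from any $\{S_{t_n}x_n\}$ with $x_n\in D$ and $t_n\to\infty$ shows that $\omega(D)$ is a non-empty closed subset of $\mathcal{A}$, hence compact; a standard contradiction argument then shows $\omega(D)$ uniformly attracts $D$. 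This is precisely asymptotic smoothness.

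For the converse, assume $(S_t,H)$ is ultimately dissipative and asymptotically smooth, and let $\mathcal{B}_0$ be a bounded absorbing set. First I would upgrade $\mathcal{B}_0$ to a bounded, forward invariant absorbing set $\mathcal{B}\equiv\overline{\bigcup_{t\ge t_*}S_t\mathcal{B}_0}$, where $t_*$ is chosen large enough that $S_t\mathcal{B}_0\subset\mathcal{B}_0$ for $t\ge t_*$ (possible since $\mathcal{B}_0$ is absorbing). Asymptotic smoothness applied to $\mathcal{B}$ then produces a compact $K\subset\overline{\mathcal{B}}$ uniformly attracting $\mathcal{B}$. The candidate attractor is $\mathcal{A}\equiv\omega(\mathcal{B})$. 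Three properties need verification: (i) nonemptiness and compactness of $\mathcal{A}$ -- uniform attraction of $\mathcal{B}$ to $K$ forces $\mathcal{A}$ to lie in every $\epsilon$-neighborhood of $K$, hence $\mathcal{A}\subset K$, and non-emptiness follows from extracting a convergent subsequence along any trajectory emanating from $\mathcal{B}$; (ii) strict invariance $S_t\mathcal{A}=\mathcal{A}$, which follows from continuity of $S_t$ together with the sequential characterization of $\omega(\mathcal{B})$ via sequences $S_{t_n}x_n\to x$ with $x_n\in\mathcal{B}$ and $t_n\to\infty$; and (iii) uniform attraction of arbitrary bounded sets -- any bounded $B\subset H$ is absorbed by $\mathcal{B}$ in finite time, so attraction of $\mathcal{B}$ to $\mathcal{A}$ transfers to $B$.

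The main obstacle will be the passage from the abstract compact attracting set $K$ (supplied by asymptotic smoothness) to the genuine attractor $\mathcal{A}$. Note that $K$ itself need not be invariant under $S_t$, so one cannot simply take $\mathcal{A}=K$; the omega-limit construction is essential to extract the invariant part. The technical crux is the sequential compactness argument in step (i)--(ii): once $t_n$ is large, $S_{t_n}x_n$ lies within $\epsilon_n\to 0$ of $K$, and compactness of $K$ delivers a convergent subsequence. Every structural property of $\mathcal{A}$ -- compactness, nonemptiness, invariance, and the promotion of attraction from $\mathcal{B}$ to arbitrary bounded sets -- flows from this single compactness input, and careful bookkeeping is needed to ensure limits obtained this way actually land in $\mathcal{A}$ rather than merely in some neighborhood of $K$.
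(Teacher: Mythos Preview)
Your sketch is correct and follows the standard route (upgrade the absorbing set to a forward invariant one, apply asymptotic smoothness to obtain a compact attracting set $K$, and take $\mathcal{A}=\omega(\mathcal{B})$). Note, however, that the paper does not supply its own proof of this theorem: it is stated in the Appendix as a standard background result from the general theory of dissipative dynamical systems, with the implicit reference being the monographs \cite{Babin-Vishik,quasi,springer}. So there is nothing in the paper to compare against; your argument is essentially the textbook one found in those references.
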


For non-gradient systems, the above theorem is often the mechanism employed to obtain the existence of a compact global attractor. If one can show that an ultimately dissipative dynamical system $(S_t,H)$ is also asymptotically smooth, one obtains the existence of a compact global attractor. In many cases, showing asymptotic smoothness can be done conveniently using the following criterion due to \cite{kh} and presented in a streamlined way in \cite{quasi,springer}.

\begin{theorem}
\label{psi} Let $(S_t,H)$ be a dynamical system,
$H$ a Banach space with norm $||\cdot||$. Assume that for any
bounded positively invariant set $B \subset H$ and for
all $\epsilon>0$ there exists a $T\equiv T_{\epsilon,B}$ such that
\begin{equation*}
||S_Ty_1 - S_Ty_2||_{\mathcal{H}} \le
\epsilon+\Psi_{\epsilon,B,T}(y_1,y_2),~~y_i \in B
\end{equation*}
with $\Psi$ a functional defined on $B \times B$ depending on
$\epsilon, T,$ and $B$ such that
\begin{equation*}
\liminf_m \liminf_n \Psi_{\epsilon,T,B}(x_m,x_n) = 0
\end{equation*}
for every sequence $\{x_n\}\subset B$. Then $(S_t,H)$ is
an asymptotically smooth dynamical system.
\end{theorem}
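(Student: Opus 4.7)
The plan is to prove asymptotic smoothness via Kuratowski's measure of non-compactness $\alpha(\cdot)$: it is standard that $(S_t, H)$ is asymptotically smooth iff $\alpha(S_tB) \to 0$ as $t \to \infty$ for every bounded positively invariant $B \subset H$, the compact attracting subset being $\omega(B) = \bigcap_{t\ge 0} \overline{\bigcup_{s\ge t} S_sB}$. So it suffices to establish this decay.

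Fix such a $B$ and $\epsilon > 0$, and apply the hypothesis to produce $T = T_{\epsilon,B}$ and $\Psi = \Psi_{\epsilon,T,B}$. I aim to show $\alpha(S_TB) \le \epsilon$. Granted this, positive invariance of $B$ gives $S_{T+s}B = S_T(S_sB) \subset S_TB$, so monotonicity of $\alpha$ under inclusion yields $\alpha(S_tB) \le \epsilon$ for all $t \ge T$; the arbitrariness of $\epsilon$ (with $T$ allowed to grow) then delivers $\alpha(S_tB)\to 0$, completing the proof modulo the core claim.

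To bound $\alpha(S_TB)$, I use the standard sequential criterion: it suffices to show that every sequence $\{y_n\} \subset S_TB$ has a subsequence $\{y_{n_k}\}$ with $\limsup_{k,j\to\infty}||y_{n_k}-y_{n_j}|| \le \epsilon$. Write $y_n = S_Tx_n$, $x_n \in B$, so that by the hypothesis and symmetry of the norm, $||y_m-y_n|| \le \epsilon + \min\{\Psi(x_m,x_n), \Psi(x_n,x_m)\}$. The subsequence is built by a nested diagonal procedure. Applying $\liminf_m\liminf_n \Psi(x_m,x_n) = 0$ to $\{x_n\}$, I pick $n_1$ with $\liminf_n \Psi(x_{n_1},x_n) < 1/2$ and then an infinite set $I_1$ of indices $n>n_1$ with $\Psi(x_{n_1},x_n) < 1/2$. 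The hypothesis applies to the subsequence indexed by $I_1$ as well---since it is postulated for \emph{every} sequence in $B$---so I extract $n_2 \in I_1$ and an infinite $I_2 \subset I_1$ (with $n > n_2$) satisfying $\Psi(x_{n_2},x_n) < 2^{-2}$. Iterating produces $n_1 < n_2 < \cdots$ with $\Psi(x_{n_j}, x_{n_k}) < 2^{-j}$ whenever $j < k$, whence
\begin{equation*}
||y_{n_k}-y_{n_j}|| \le \epsilon + 2^{-\min(j,k)} \longrightarrow \epsilon,
\end{equation*}
which yields the required subsequence and so $\alpha(S_TB)\le \epsilon$.

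The principal obstacle is precisely this nested extraction: the iterated $\liminf$ condition is \emph{a priori} strictly weaker than the existence of a $\Psi$-Cauchy subsequence of $\{x_n\}$ (one can imagine $\Psi$ that is small only along carefully chosen sub-sequences while the iterated $\liminf$ remains zero). The argument succeeds only because the hypothesis is postulated uniformly over all sequences in $B$, and hence survives passage to any subsequence, allowing the induction to close. Everything else---the sequential characterization and monotonicity of $\alpha$, and the standard passage from $\alpha(S_tB)\to 0$ to asymptotic smoothness via the $\omega$-limit set---is classical.
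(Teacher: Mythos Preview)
The paper does not prove Theorem~\ref{psi}; it is quoted in the Appendix as a known criterion, attributed to \cite{kh} and the monographs \cite{quasi,springer}. So there is no in-paper argument to compare against.

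Your proof is correct. The reduction to the Kuratowski measure $\alpha$ is the standard route, and the crucial step---the nested diagonal extraction---is handled properly: because the iterated $\liminf$ hypothesis is assumed for \emph{every} sequence in $B$, it persists along each successive subsequence $(x_n)_{n\in I_j}$, which is exactly what closes the induction and produces the chain $\Psi(x_{n_j},x_{n_k})<2^{-j}$ for $j<k$. One small caveat: the ``sequential criterion'' you invoke, namely that
\[
\Big(\text{every sequence has a subsequence with }\limsup_{j,k}\|y_{n_j}-y_{n_k}\|\le\epsilon\Big)\ \Longrightarrow\ \alpha(S_TB)\le\epsilon,
\]
is most cleanly justified via the Istr\u{a}\c{t}escu (separation) measure $\beta$, and in general only yields $\alpha\le 2\beta\le 2\epsilon$. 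This constant is of course immaterial for the conclusion $\alpha(S_tB)\to 0$, but it would be worth stating the criterion with the harmless factor, or citing the precise inequality $\beta\le\alpha\le 2\beta$, so the reader does not pause there.
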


A generalized fractal {\em exponential attractor} for the dynamics $(S_t,H)$ is a forward invariant, compact set $ A_{\text{exp}} \subset Y$ in the phase space, with finite fractal dimension (possibly in a weaker topology), that attracts bounded sets with uniform exponential rate.  When we refer to $A_{\text{exp}}$ as a {\em fractal exponential attractor}, we are simply indicating that $A_{\text{exp}}\subset Y$ has fractal dimension in $Y$, rather than in some weaker space. 

 \begin{remark}\label{defmark}
 We include the word ``generalized" to indicate that the finite fractal dimensionality could be shown in a weaker topology than that of the state space.\end{remark}

In Section \ref{strong} we will implicitly appeal to the transitivity of exponential attraction (via Theorem \ref{exp2}); we show it here for the sake of exposition. Loosely, in a fixed topology, the property of a set uniformly exponentially attracting bounded sets in this topology is transitive. We now state this formally---see \cite{fgmz} for discussion and applications to the nonlinear wave equation.
\begin{theorem}[Theorem 5.1, \cite{fgmz}]\label{trans}
Let $(\mathcal M,d)$ be a metric space with $d_{\mathcal M}$ denoting the Hausdorff semi-distance. Let a semigroup $S_t$ act on $\mathcal M$ such that
$$d(S_t m_1, S_tm_2) \le Ce^{Kt}d(m_1,m_2),~~m_i \in \mathcal M.$$
Assume further that there are sets $M_1, M_2, M_3 \subset \mathcal M$ such that
$$d_{\mathcal M}(S_tM_1,M_2) \le C_1e^{-\alpha_1 t},~~d_{\mathcal M}(S_t M_2, M_3) \le C_2 e^{-\alpha_2t}.$$
Then for $C'=CC_1+C_2$ and $\alpha'=\dfrac{\alpha_1\alpha_2}{K+\alpha_1+\alpha_2}$ we have
$$d_{\mathcal M}(S_tM_1,M_3) \le C'e^{-\alpha't}.$$
\end{theorem}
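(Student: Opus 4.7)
The plan is to prove the transitivity estimate by a classical two-stage flow argument: first drive $M_1$ close to $M_2$ on a time interval $[0,s]$, then propagate that smallness for the remaining time $t-s$ and use the second exponential attraction hypothesis to close the distance to $M_3$. The free parameter $s \in (0,t)$ will then be optimized so that the two resulting exponential decays balance. Fix $m_1 \in M_1$, $t > 0$, and $\epsilon > 0$. Since $d_{\mathcal M}(S_s M_1, M_2) \le C_1 e^{-\alpha_1 s}$, I would choose $m_2 = m_2(\epsilon,s,m_1) \in M_2$ with $d(S_s m_1, m_2) \le C_1 e^{-\alpha_1 s} + \epsilon$.

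Next, I would propagate on the interval $[s,t]$ using the Lipschitz assumption $d(S_\tau x, S_\tau y) \le C e^{K\tau} d(x,y)$ with $\tau = t-s$. Writing $S_t m_1 = S_{t-s}(S_s m_1)$ and comparing with $S_{t-s} m_2$ yields
\begin{equation*}
d(S_t m_1, S_{t-s} m_2) \le C e^{K(t-s)}\bigl(C_1 e^{-\alpha_1 s} + \epsilon\bigr).
\end{equation*}
Since $m_2 \in M_2$, the second exponential attraction hypothesis applied at time $t-s$ gives $d(S_{t-s} m_2, M_3) \le C_2 e^{-\alpha_2(t-s)}$. Combining these via the triangle inequality for the Hausdorff semi-distance and letting $\epsilon \downarrow 0$, I obtain
\begin{equation*}
d(S_t m_1, M_3) \le C C_1 \, e^{K(t-s) - \alpha_1 s} + C_2 \, e^{-\alpha_2 (t-s)}.
\end{equation*}

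The remaining step is to choose $s$ so that both exponents coincide with a common decay rate $-\alpha' t$. Setting $K(t-s) - \alpha_1 s = -\alpha_2 (t-s) = -\alpha' t$ and solving, one finds $t-s = \alpha' t / \alpha_2$ and consequently $\alpha' = \alpha_1 \alpha_2 /(K + \alpha_1 + \alpha_2)$, which matches the statement. With this choice the two prefactors simply add, producing the bound $d(S_t m_1, M_3) \le (CC_1 + C_2) e^{-\alpha' t} = C' e^{-\alpha' t}$. Taking the supremum over $m_1 \in M_1$ converts this pointwise estimate into the claimed semi-distance inequality $d_{\mathcal M}(S_t M_1, M_3) \le C' e^{-\alpha' t}$.

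I expect no genuine obstacle here; the argument is a purely metric-space manipulation and uses neither compactness nor dissipativity of the semigroup. The only delicate point is the balance calculation: the growth constant $K$ appears on a time interval of length $t-s$ and therefore erodes the first rate $\alpha_1$, and the formula $\alpha' = \alpha_1 \alpha_2 / (K + \alpha_1 + \alpha_2)$ is precisely the optimal trade-off. This clean structure is what makes Theorem \ref{trans} broadly applicable, and in particular allows it to be invoked in our Theorem \ref{exp2} to combine the exponential attraction of the smooth set $\mathcal S$ with exponential attraction within $\mathcal S$ to yield exponential attraction of bounded sets of the full state space $Y$.
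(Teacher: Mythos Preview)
The paper does not supply its own proof of Theorem~\ref{trans}; it is quoted in the Appendix as a known result from \cite{fgmz} and used as a black box. Your argument is correct and is exactly the standard two-stage proof from \cite{fgmz}: split the evolution at an intermediate time $s$, use the first attraction hypothesis to approximate $S_s m_1$ by a point of $M_2$, propagate the error on $[s,t]$ via the Lipschitz bound, apply the second attraction hypothesis on $[s,t]$, and then balance the two exponents by choosing $s = (K+\alpha_2)t/(K+\alpha_1+\alpha_2)$, which yields the stated rate $\alpha' = \alpha_1\alpha_2/(K+\alpha_1+\alpha_2)$ and constant $C' = CC_1 + C_2$.
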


\section{Acknowledgments}
The authors would like to dedicate this work to the memory of Igor D. Chueshov. It is with remarkable sadness that we say goodbye to a truly exceptional mathematician, friend, and collaborator. 

The authors also wish to thank the referee for his or her careful reading which greatly improved the quality of the manuscript. 

The second author was partially supported by the National Science Foundation with grant NSF-DMS-0606682 and the United States Air Force Office of Scientific Research with grant AFOSR-FA99550-9-1-0459. The third author was partially supported by National Science Foundation with grant NSF-DMS-1504697.
\small

\end{document}